\newcommand{\BA}{{\mathbb {A}}}
\newcommand{\BF}{{\mathbb {F}}}
\newcommand{\BP}{{\mathbb {P}}}
\newcommand{\BQ}{{\mathbb {Q}}}
\newcommand{\BR}{{\mathbb {R}}}
\newcommand{\BX}{{\mathbb {X}}}
\newcommand{\BZ}{{\mathbb {Z}}}
\newcommand{\CB}{{\mathcal {B}}}
\newcommand{\CE}{{\mathcal {E}}}
\newcommand{\CF}{{\mathcal {F}}}
\newcommand{\CL}{{\mathcal {L}}}
\newcommand{\CM}{{\mathcal {M}}}
\newcommand{\CN}{{\mathcal {N}}}
\newcommand{\CO}{{\mathcal {O}}}
\newcommand{\CT}{{\mathcal {T}}}
\newcommand{\CV}{{\mathcal {V}}}
\newcommand{\diag}{{\mathrm{diag}}}
\newcommand{\End}{{\mathrm{End}}}
\newcommand{\Gal}{{\mathrm{Gal}}\,}
\newcommand{\GL}{{\mathrm{GL}}}
\newcommand{\Hom}{{\mathrm{Hom}}\,}
\newcommand{\Lie}{{\mathrm{Lie}}\,}
\newcommand{\SL}{{\mathrm{SL}}}
\newcommand{\Spec}{{\mathrm{Spec}}\,}
\newcommand{\Sp}{{\mathrm{Sp}}}
\newcommand{\SU}{{\mathrm{SU}}}
\newcommand\bk{\boldsymbol{k}}
\newcommand{\lra}{\longrightarrow}
\newcommand{\bs}{\backslash}
\newtheorem{theorem}{Theorem}[section]
\newtheorem{proposition}[theorem]{Proposition}
\newtheorem{lemma}[theorem]{Lemma}
\newtheorem{corollary}[theorem]{Corollary}
\theoremstyle{definition}
\newtheorem{definition}[theorem]{Definition}
\newtheorem{remark}[theorem]{Remark}
\numberwithin{equation}{section}
\begin{document}


\title[Supersingular locus]{The supersingular locus of the Shimura variety for ${\rm GU}(1, n-1)$ over a ramified prime}
\author{Michael Rapoport, Ulrich Terstiege and Sean Wilson}

\date{\today}
\maketitle

\tableofcontents

\section{Introduction}
Consider the reduction modulo a prime ideal of an integral model of a Shimura variety. The present paper is a contribution to  the general problem of giving a {\it concrete description} of its {\it basic locus}.  This question has been addressed in the case of the Siegel moduli spaces by Katsura/Oort,  Li/Oort, Kaiser, Richartz, and Kudla/Rapoport. We refer to the introduction of \cite {V} for the precise references. In this case the basic locus coincides with the {\it supersingular locus}. In the case of Hilbert-Blumenthal moduli spaces, where again the basic locus coincides with the supersingular locus, there are results by Bachmat/Goren, by Goren, by Goren/Oort, by Stamm and by Yu. Again we refer to the introduction of \cite{V} for the precise references. For the Shimura variety for ${\rm GU}(1, n-1)$ at an inert prime, where again  the basic locus coincides with the supersingular locus, there are the results of Vollaard \cite{V} and by Vollaard/Wedhorn \cite{VW}. It is our purpose here to prove structure theorems,   analogous to \cite{VW},  for the Shimura variety for ${\rm GU}(1, n-1)$ in the case of a ramified prime. 

By the uniformization theorem of \cite{RZ}, the general problem can be seen as a special case of the general problem of describing the underlying reduced scheme $\CN_{\rm red}$ of any RZ-space $\CN$. The experience of the work done so far on this problem seems to roughly indicate that the set of irreducible components of $\CN_{\rm red}$ should be describable in terms of the Bruhat-Tits simplicial complex  associated to the corresponding algebraic group $J$ over $\BQ_p$. Furthermore, each irreducible component should be related to some Deligne-Lusztig variety, although it only rarely will be actually isomorphic to a Deligne-Lusztig variety. It should be pointed out that we can solve this kind of problem in only a limited number of cases. The analogue of this problem in the equi-characteristic case is a question about the structure of affine Deligne-Lusztig varieties (ADLV) \cite{GH}. U.~G\"ortz and X.~He have informed us\footnote{oral communication} that they produced an essentially  complete list of ({\it basic}) ADLV for which the set of irreducible components are described by the Bruhat-Tits complex of $J$, and in which each irreducible component is isomorphic to a Deligne-Lusztig variety. It would be interesting to transpose their list to the unequal characteristic case. The case of the RZ-space associated to an unramified hermitian space of signature  $(2, 2)$ has been solved by B.~Howard and G.~Pappas \cite{HP}. 

The Shimura variety considered in this paper is associated to ${\rm GU}(1, n-1)$ over a ramified prime $p$, for the parahoric level structure at $p$ given by a selfdual lattice in the localization at $p$ of the corresponding hermitian vector space. We note that only when $n$ is odd, is the corresponding parahoric subgroup {\it special} in the sense of Bruhat-Tits. This Shimura variety has bad reduction at $p$, for all $n$. There is a resolution of singularities through the {\it Kr\"amer model}, \cite{Kr}, but our results only concern the original moduli problem. We should point out that there is always, for ${\rm GU}(1, n-1)$ over a ramified prime $p$, for even or odd $n$, a special parahoric subgroup where the corresponding Shimura variety has good reduction ({\it exotic good reduction}, cf. \cite{PR3, A, Ri}). It would be very interesting to have  results on the supersingular locus analogous to the results in this paper in these cases as well. 

For the Shimura varieties attached to ${\rm GU}(1, n-1)$, the problem of describing the basic locus is of relevance to the study of special subvarieties and their intersections on them, and in fact of two sorts: to the study of special divisors on them as in \cite{KR}; and also to the study of special subvarieties on the product of two such Shimura varieties, for $n$ and for $n-1$, relevant to the Arithmetic Gan-Gross-Prasad Conjecture and the Arithmetic Fundamental Lemma conjecture of W. Zhang \cite{Zh}. We refer to the papers \cite{T,T2, RTZ} for examples of such use in the inert case. The results of \cite{VW} are also used in the work of Xu Shen \cite{XS} on the fundamental domain for the natural group action on the RZ-space corresponding to ${\rm GU}(1, n-1)$ over an unramified prime, in analogy to Fargues' work on the RZ-space for $\GL_n$, \cite{Fa}. 

We now state our main results. We start with the local situation. Let $E$ be a ramified quadratic extension of $\BQ_p$. We denote by $\BF$ an 
algebraic closure of $\BF_p$, and by $W=W(\BF)$ its ring of Witt vectors and by $W_\BQ$ its fraction
field. Let $\breve{E} = W_\BQ \otimes_{\BQ_p} E$ and let $\CO_{\breve{E}} = W \otimes_{\BZ_p} \CO_E$ be its ring
of integers. Let $(\BX, \iota_\BX)$ be a fixed supersingular $p$-divisible group of dimension $n$ and height $2n$ over $\BF$
with an action $\iota_\BX : \CO_E \lra \rm End(\BX)$. Let $\lambda_\BX$ be a principal quasi-polarization such that its
Rosati involution induces on $\CO_E$ the non-trivial automorphism over $\BQ_p$.  We refer to section \ref{section.Themodulispace} for the definition of the formal scheme $\CN$ over ${\rm Spf}\, \CO_{\breve E}$ which represents the formal moduli problem of certain triples $(X, \iota, \lambda)$ together with a quasi-isogeny  of its special fiber with $(\BX, \iota_\BX, \lambda_\BX)$. We denote   by $\CN^0$ its open and closed locus where the quasi-isogeny has height $0$, and by $\bar\CN^0$ the  fiber of $\CN^0$ over $\BF$. Our {\it local} main theorem describes the underlying reduced scheme $\bar\CN^0_{\rm red}$ of $\bar\CN^0$. 

To the triple $(\BX, \iota_\BX, \lambda_\BX)$, there is associated a hermitian vector space $C$ of dimension  $n$ over $E$. A $\CO_E$-lattice $\Lambda$ in $C$ is called a vertex lattice of type $t=t(\Lambda)$, if the dual lattice $\Lambda^\sharp$ is contained in $\Lambda$ and the factor module $\Lambda/\Lambda^\sharp$ is a $\BF_p$-vector space of dimension $t$.  It turns out that the type is an even integer, and that all even integers $0\leq t\leq n$ occur as types of vertex lattices, except when $n$ is even and the hermitian space $C$ is non-split, in which case all even $t$ with $0\leq t\leq n-2$ occur but $t=n$ does not occur.

\begin{theorem}\label{mainthIntro}
  i) There is a stratification of $\bar\CN^{0}_{\rm{red}}$ by locally closed subschemes 
given by $$
\bar\CN^{0}_{\rm{red}}= \biguplus_{\Lambda}\CN^{\circ}_{\Lambda}\, , 
$$
where $\Lambda$ runs over all vertex lattices. Each stratum $\CN^{\circ}_{\Lambda}$ is isomorphic to a Deligne-Lusztig variety attached to the symplectic group of size $t(\Lambda)$ over $\BF_p$ and a standard Coxeter element. The closure of any $\CN^{\circ}_{\Lambda}$ in $\bar\CN^{0}_{\rm{red}}$ is given by 
$$
\overline{\CN^{\circ}_{\Lambda}}=\biguplus_{ \Lambda'\subseteq \Lambda}\CN^{\circ}_{\Lambda'}=\CN_{\Lambda}\, ,
$$
and this is a projective variety of dimension   $t(\Lambda)/2$, which is normal 
with isolated singularities when $t(\Lambda)\geq 4$. When $t(\Lambda)=2$, then $\overline{\CN^{\circ}_{\Lambda}}\cong \BP^1$. 

\smallskip

\noindent We call this stratification the \emph{Bruhat-Tits stratification} of $\bar\CN^{0}_{\rm{red}}$. 

\smallskip

\noindent ii) The incidence complex of the Bruhat-Tits stratification can be identified with the simplicial complex $\CT$ of vertex lattices.  
In particular,  $\bar\CN^{0}_{\rm{red}}$ is connected. When $n$ is odd, or $n$ is even and $C$ is non-split, then this can also be identified with the Bruhat-Tits simplicial complex of $\SU(C)(\BQ_p)$.

\smallskip

\noindent iii) The $\CN_{\Lambda}$, for $\Lambda$ of  maximal type,  are the irreducible components of $\bar\CN^{0}_{\rm{red}}$. The maximal value $t_{\max}$ of $t(\Lambda)$ is given by
$$t_{\max}=
\begin{cases}
   n  & \text{if $n$ is even and $C$ is split,}\\  
   n-2 & \text{if $n$ is even and  $C$ is non-split,}\\    
      n-1 & \text{if $n$ is odd.}\\
\end{cases}$$
 
 \noindent Furthermore,  $\bar\CN^{0}_{\rm{red}}$ is of pure dimension $t_{\max}/2$. 
  
\smallskip

\noindent iv) Let $\Lambda$ be a vertex lattice of type $2m$ and let $V=\Lambda/\Lambda^{\sharp}$. The scheme  $\overline{\CN^{\circ}_{\Lambda}}=\CN_{\Lambda}$ is isomorphic to the projective closure of a generalized Deligne-Lusztig variety $S_V$. Furthermore the stratification by Deligne-Lusztig varieties $S_V= \biguplus_{i = 0}^m S_i$ of Proposition \ref{decomp} is compatible with the stratification given in i) in the following sense: Under the isomorphism  $\overline{\CN^{\circ}_{\Lambda}}\cong S_V$ (given by Proposition \ref{iso}), for any $i\leq m$ and any inclusion of index $m-i$ of a vertex lattice $\Lambda'\subseteq \Lambda$, the subscheme $\CN^{\circ}_{\Lambda'}$ is identified with an irreducible component of $S_i$, and all  irreducible components of $S_i$ arise in this way. In particular, $S_m$  is identified with $\CN^{\circ}_{\Lambda}$.
\end{theorem}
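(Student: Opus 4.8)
The plan is to follow the strategy of Vollaard--Wedhorn \cite{VW} for the inert case, suitably modified to accommodate ramification. The first step is to translate the moduli problem into linear algebra: using covariant Dieudonné theory (and the theory of displays to handle the $\CO_E$-action at a ramified prime) I would identify $\bar\CN^0_{\rm red}(\BF)$ with the set of $W$-lattices $M$ in the rational Dieudonné module of $(\BX,\iota_\BX,\lambda_\BX)$ that are stable under $F$, $V$ and $\CO_E$, satisfy $pM\subseteq FM\subseteq M$, are self-dual for the alternating form induced by $\lambda_\BX$, and satisfy the signature $(1,n-1)$ (Kottwitz/wedge) condition. Writing $\Pi$ for a uniformizer of $\CO_E$, so that $\Pi^2=p$ up to a unit and $\Pi$ is anti-self-adjoint for the form, the heart of the local analysis is the interaction of $\Pi$ with $F$ and $V$ on such $M$. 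It is exactly the ramification --- as opposed to the inert case, where an unramified twist intervenes --- that makes the induced form on the graded pieces alternating (hence the ambient group symplectic rather than unitary) and that forces all vertex types to be even.

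Next I would attach to each point $M$ a vertex lattice $\Lambda(M)$: concretely the $\CO_E$-lattice generated by the iterates $\Phi^i(M)$, $i\geq 0$, of the natural semilinear operator $\Phi=\Pi^{-1}V$ (suitably normalized), which I would show stabilizes after finitely many steps, is stable under Frobenius and $\CO_E$, descends to the hermitian space $C$, and is a vertex lattice whose type measures the failure of $M$ to be superspecial. Taking $\CN^\circ_\Lambda$ to be the locus where $\Lambda(M)=\Lambda$ and $\CN_\Lambda$ the locus where $\Lambda(M)\subseteq\Lambda$ then produces the decomposition in i), with $\CN_\Lambda=\biguplus_{\Lambda'\subseteq\Lambda}\CN^\circ_{\Lambda'}$ immediate from the definitions. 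That the types occurring are precisely those listed before the theorem, that every vertex lattice is contained in one of maximal type, and that the common maximal type is the $t_{\max}$ of iii), are statements about the $p$-adic hermitian space $C$ (equivalently the associated quadratic space) that I would extract from the classification of such spaces; combined with the trivial fact that distinct maximal vertex lattices are incomparable, this will give iii) and the purity of dimension once the dimension formula of i) is established.

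For the geometry of the individual strata I would fix a vertex lattice $\Lambda$ of type $2m$ and form $V=\Lambda/\Lambda^\sharp$, an $\BF_p$-vector space of dimension $2m$ with a nondegenerate alternating form and a Frobenius. Given $M$ with $\Lambda(M)\subseteq\Lambda$, I would check that the image of $M$ in $V\otimes_{\BF_p}\BF$ is an isotropic subspace whose relative position to its Frobenius conjugate is prescribed by a standard Coxeter element of $\Sp_{2m}$, and that conversely every such subspace arises from a unique $M$; this is the isomorphism $\CN_\Lambda\cong S_V$ of Proposition \ref{iso}, where $S_V$ is the projective variety closing up the Coxeter Deligne--Lusztig variety for $\Sp_{2m}(\BF_p)$, and $\CN^\circ_\Lambda$ corresponds to the open dense (honest) Deligne--Lusztig stratum $S_m$. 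The locally closed decomposition $S_V=\biguplus_{i=0}^{m}S_i$ of Proposition \ref{decomp}, indexed by the dimension of the radical of the tautological isotropic subspace, then matches --- stratum by stratum --- the loci $\CN^\circ_{\Lambda'}$ with $\Lambda'\subseteq\Lambda$ of corank $m-i$; this is assertion iv), and it forces $\overline{\CN^\circ_\Lambda}=\CN_\Lambda$, which completes i). The dimension $m=t(\Lambda)/2$ is that of a Coxeter Deligne--Lusztig variety in type $C_m$; for $m=1$ the relevant Deligne--Lusztig variety for $\Sp_2=\SL_2$ compactifies to $\BP^1$, and for $m\geq 2$ I would deduce normality with isolated singularities from the explicit decomposition (the singular locus being the finite stratum $S_0$, with normality checked either by a direct local computation or by realizing $S_V$ inside a Schubert variety and invoking known normality results).

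Finally, assertion ii) is combinatorial: by construction the incidence complex of the Bruhat--Tits stratification is the poset of vertex lattices ordered by inclusion, that is, the simplicial complex $\CT$; when $n$ is odd, or $n$ is even and $C$ non-split, vertex lattices are in face-preserving bijection with the vertices of the Bruhat--Tits building of $\SU(C)(\BQ_p)$, and connectedness of $\bar\CN^0_{\rm red}$ follows since a building is connected (indeed contractible). I expect the main obstacle to be the identification of the strata with Deligne--Lusztig varieties: one must simultaneously pin down the equivalence between the Dieudonné-lattice data at a fixed $\Lambda$ and the functor of points of the Coxeter Deligne--Lusztig variety for $\Sp_{2m}$, and control precisely how this equivalence degenerates as $\Lambda$ shrinks, so that the closure relations emerge exactly as in i) and iv). Because of the ramification essentially none of this can be quoted from \cite{VW} --- the relevant group is symplectic, the form on $V$ is alternating, and both the display/local-model computation underlying the lattice description and the explicit analysis of $S_V$ must be carried out afresh.
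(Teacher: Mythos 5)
Your proposal follows the same strategy as the paper: reduce to a lattice-model description via Dieudonné theory, attach to each lattice $M$ a vertex lattice by saturating under iterates of a semilinear operator (the paper's $\tau=\eta\,\Pi V^{-1}$, your $\Pi^{-1}V$ up to normalization), identify the closed stratum $\CN_\Lambda$ with the compactified symplectic Coxeter Deligne--Lusztig variety $S_V$ (the paper proves normality via smooth equivalence to a Schubert variety and cites Brion--Polo for the singularity at the finite stratum, matching one of your suggested routes), and derive i)--iv) from the combinatorics of vertex lattices and the explicit decomposition $S_V=\biguplus_i S_i$. Apart from minor imprecisions (e.g.\ describing the index of $S_i$ via a ``radical'' rather than via the codimension of the largest Frobenius-stable subspace of $U$, and leaving the isolated-singularity claim somewhat open-ended), this matches the paper's argument step for step.
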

The results in this ramified case are therefore quite analogous to the unramified case. What is remarkable is that in the ramified case the strata are Deligne-Lusztig varieties attached to {\it split} groups (namely, symplectic groups), whereas in the unramified case, they are Deligne-Lusztig varieties attached to {\it non-split} groups (namely, unitary groups). On the other hand, in the ramified case the closures of strata have singularities, whereas they are smooth in the unramified case.  

Let us discuss the low-dimensional cases. When $n=2$, there are two possibilities, depending on whether $C$ is split or non-split. When $C$ is non-split, the scheme $\bar\CN^0_{\rm red}$ consists of a single point. When $C$ is split, then $\bar\CN^0_{\rm red}$ is one-dimensional, and is a union of projective lines.  The {\it dual graph} of  this curve is given by  the Bruhat-Tits complex of $\SU(C)(\BQ_p)$, which  coincides with the Bruhat-Tits tree of $\SL_2(\BQ_p)$.  In fact, it is proved in \cite{KR3} that, in this case, the formal scheme $\CN^0$ is isomorphic to the Drinfeld formal scheme $\widehat\Omega^2_{\BZ_p}\times_{{\rm Spf}\, \BZ_p}{\rm Spf}\,  \CO_{\breve E}$, and the reduced locus coincides in this case with its special fiber, with its well-known intersection pattern. When $n=3$, then $\bar\CN^0_{\rm red}$ is one-dimensional. All its irreducible components are projective lines, and the Bruhat-Tits complex of $\SU(C)(\BQ_p)$ describes the incidence graph of the stratification. The latter two cases are described by {\it figure} 1. 

Now we state our global results. Let $\bk$ be an imaginary quadratic field, and let $p>2$ be a prime number that ramifies in $\bk$. Our results concern Shimura varieties $\underline{\rm Sh}^V_K$ associated to a hermitian $\bk$-vector space $V$ of signature $(1,n-1)$, and open compact subgroups $K$ of the associated group of unitary similitudes $G^V(\BA_f)$ which are of the form $K=K^pK_p$, where $K_p$ is the stabilizer of a self-dual lattice in $V\otimes_{\bk}{\bk_p}$. These Shimura varieties have integral models $\CM_{K^p}$ over the localization $\CO_{\bk_{(p)}}$ of $\CO_{\bk}$ at $p$. These models are defined by formulating a moduli problem of principally polarized abelian varieties with action of $\CO_{\bk}$ and a level-$K^p$-structure, cf. section \ref{section.Shimuravarieties}.  We denote by $\CM_{K^p}^{ss}$ the supersingular locus of $\CM_{K^p}\times_{\Spec \CO_{\bk_{(p)}}}\Spec\, \overline{\BF}_p$. We use the notation introduced in the formulation of Theorem \ref{mainthIntro}, relative to the quadratic extension $E=\bk_p$ of $\BQ_p$ (then the hermitian space $C$ has invariant that differs from that of $V_p$ by the factor $(-1)^{n-1}$). The algebraic group $I^V$ is an inner form of the unitary group of $V$. For all other notation we refer to the body of the text.

\begin{theorem}\label{mainthglobalIntro}
\noindent (i) The supersingular locus $\CM^{ss}_{K^p}$ is of pure dimension $t_{\rm max}/2$. It is stratified by Deligne-Lusztig varieties associated to symplectic groups over $\BF_p$ of size varying between $0$ and $t_{\rm max}$ and standard Coxeter elements. The incidence complex of the stratification can be identified with the complex $\coprod_{j=1}^m \Gamma_j\backslash \CT$. 

\smallskip

\noindent (ii) There is a natural bijection
$$
\{\text{irreducible components of } \CM^{ss}_{K^p}\} \leftrightarrow I^V(\BQ)\setminus \big(J(\BQ_p)^o/K_{J,p}\times { G^V}(\BA_f^p)^o/K^p\big).
$$

\smallskip

\noindent  (iii) There is a natural bijection $$
\{\text{connected components of }  \CM^{ss}_{K^p}\} \leftrightarrow I^V(\BQ)\setminus  { G^V}(\BA_f^p)^o/K^p.
$$

\end{theorem}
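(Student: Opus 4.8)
The strategy is to deduce the global statements from the local Theorem \ref{mainthIntro} via the $p$-adic uniformization theorem of Rapoport–Zink \cite{RZ}. The first step is to set up the uniformization isomorphism for the supersingular (equivalently, basic) locus: one shows that the formal completion of $\CM_{K^p}\times_{\Spec\CO_{\bk_{(p)}}}\Spec\CO_{\breve E}$ along $\CM^{ss}_{K^p}$ is isomorphic, as a formal scheme, to
$$
I^V(\BQ)\backslash\bigl(\CN\times G^V(\BA_f^p)/K^p\bigr),
$$
where $\CN$ is the RZ formal scheme of section \ref{section.Themodulispace}, $I^V$ acts on both factors, and $I^V(\BQ_p)\cong J(\BQ_p)$ acts on $\CN$ through its identification with the group of quasi-isogenies of $(\BX,\iota_\BX,\lambda_\BX)$. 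Passing to underlying reduced schemes and remembering that $\CN=\coprod_{i\in\BZ}\CN^i$ with $\CN^i\cong\CN^0$, one gets
$$
\CM^{ss}_{K^p}\otimes\ov\BF_p\;\cong\; I^V(\BQ)\backslash\bigl(\bar\CN^0_{\rm red}\times J(\BQ_p)/J(\BQ_p)^o\times G^V(\BA_f^p)/K^p\bigr)
$$
after sorting the components of $\CN$ by the discrete invariant. Here one uses that the image of $I^V(\BQ)$ in $J(\BQ_p)\times G^V(\BA_f^p)$ meets each component indexed by the full $\BZ=J(\BQ_p)/J(\BQ_p)^o$, so that after the quotient one may work with a single copy $\bar\CN^0_{\rm red}$ and the superscript-$o$ groups; this is the standard bookkeeping (cf. the inert case in \cite{VW}) and should be carried out carefully but presents no conceptual difficulty.

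For part (i), the dimension and the Deligne–Lusztig description of the strata are local statements, hence transported verbatim from Theorem \ref{mainthIntro}(i),(iii): locally $\bar\CN^0_{\rm red}$ is of pure dimension $t_{\max}/2$ and stratified by Deligne–Lusztig varieties for symplectic groups of size $0\le t(\Lambda)\le t_{\max}$ with standard Coxeter elements, and the uniformization is an isomorphism of reduced schemes, so both properties descend. For the incidence complex, one observes that the Bruhat–Tits stratification of $\bar\CN^0_{\rm red}$ is equivariant for the $J(\BQ_p)$-action — $J(\BQ_p)$ permutes the vertex lattices $\Lambda$ — so the stratification of the quotient is indexed by the quotient of $\CT$ (the simplicial complex of vertex lattices, Theorem \ref{mainthIntro}(ii)) by the arithmetic subgroup image of $I^V(\BQ)$ acting through $J(\BQ_p)$, together with the finite set of prime-to-$p$ components. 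Identifying this quotient with $\coprod_{j=1}^m\Gamma_j\backslash\CT$ requires decomposing the double coset $I^V(\BQ)\backslash G^V(\BA_f^p)/K^p$ into its $m$ pieces and describing the corresponding arithmetic groups $\Gamma_j$ acting on $\CT$; this is where one has to be slightly careful about how strong approximation (valid for $\SU(V)$ since $p>2$ and $V$ has signature $(1,n-1)$, so the group is non-compact at $\infty$) interacts with the similitude factor and the superscript-$o$ conventions.

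Parts (ii) and (iii) are then formal consequences of the uniformization together with the local results on components and connectedness. For (ii): the irreducible components of $\bar\CN^0_{\rm red}$ are the $\CN_\Lambda$ for $\Lambda$ of maximal type $t_{\max}$ (Theorem \ref{mainthIntro}(iii)), and the set of such $\Lambda$ is a $J(\BQ_p)$-orbit, namely $J(\BQ_p)/K_{J,p}$ where $K_{J,p}$ is the stabilizer of a chosen maximal vertex lattice — after restricting to the height-$0$ component this becomes $J(\BQ_p)^o/K_{J,p}$; combining with the prime-to-$p$ factor $G^V(\BA_f^p)^o/K^p$ and taking the $I^V(\BQ)$-quotient yields exactly the asserted double-coset description. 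For (iii): by Theorem \ref{mainthIntro}(ii) each $\bar\CN^0_{\rm red}$ is connected, so the set of connected components of the uniformized space is $I^V(\BQ)\backslash\bigl(\pi_0(\CN)\times G^V(\BA_f^p)/K^p\bigr)$; since $\pi_0(\CN^0)$ is a point and the $\BZ$ of components of $\CN$ is absorbed by the $I^V(\BQ)$-action as above, this reduces to $I^V(\BQ)\backslash G^V(\BA_f^p)^o/K^p$. The main obstacle I anticipate is not any single deep input but the precise matching of the "$o$"-superscripts and similitude-norm subgroups on the two sides of the uniformization — i.e.\ verifying that the group-theoretic double cosets written in the statement are literally the ones produced by the RZ uniformization after one has accounted for the $\BZ$-grading of $\CN$ and the Kottwitz/norm maps; this is a finite but fiddly verification of the kind done in \cite{VW} for the inert case, and the argument here should parallel theirs.
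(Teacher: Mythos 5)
Your proposal follows essentially the same route as the paper: establish the Rapoport–Zink uniformization
\[
I^V(\BQ)\backslash(\bar\CN^0_{\rm red}\times G^V(\BA_f^p)^o/K^p)\cong\CM^{ss}_{K^p},
\]
rewrite the left side as $\coprod_{j=1}^m\Gamma_j\backslash\bar\CN^0_{\rm red}$ via the double-coset decomposition, observe that $\Psi$ is a local isomorphism (and finite on quasi-compact subschemes) for small $K^p$, and transport Theorem \ref{mainth} across this map. The only presentational differences are that the paper states the uniformization already in the $\CN^0$-and-superscript-$o$ form (citing \cite{KR2}, Theorem 5.5) rather than deriving it from the unrestricted version as you do, and for parts (ii) and (iii) the paper simply refers to the analogous arguments of \cite{VW}, \S6 rather than spelling out the orbit-counting as you sketch.
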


\begin{figure}
\includegraphics[width=3.3cm]{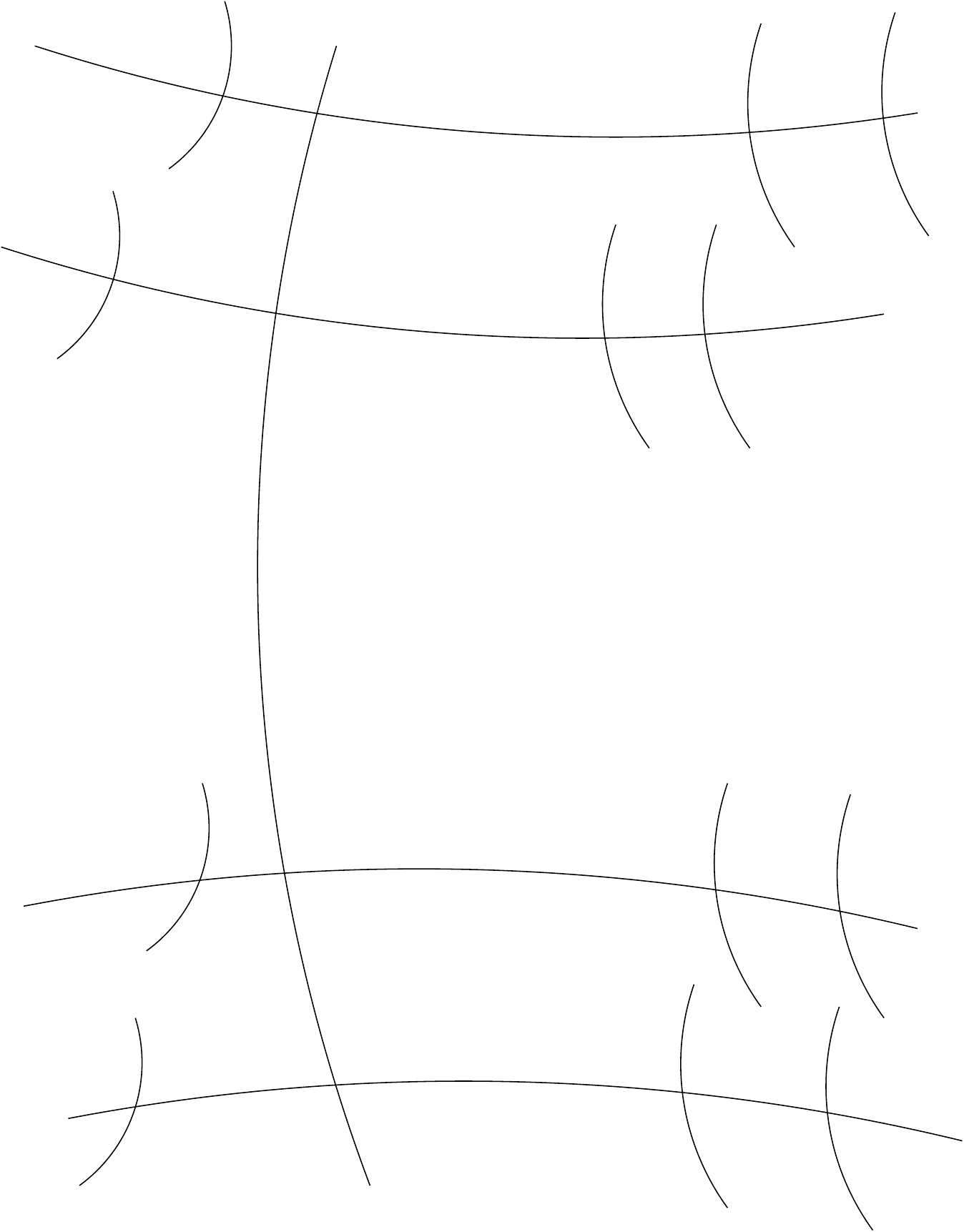}\quad\quad\quad\quad\quad\quad\quad\quad\includegraphics[width=2cm]{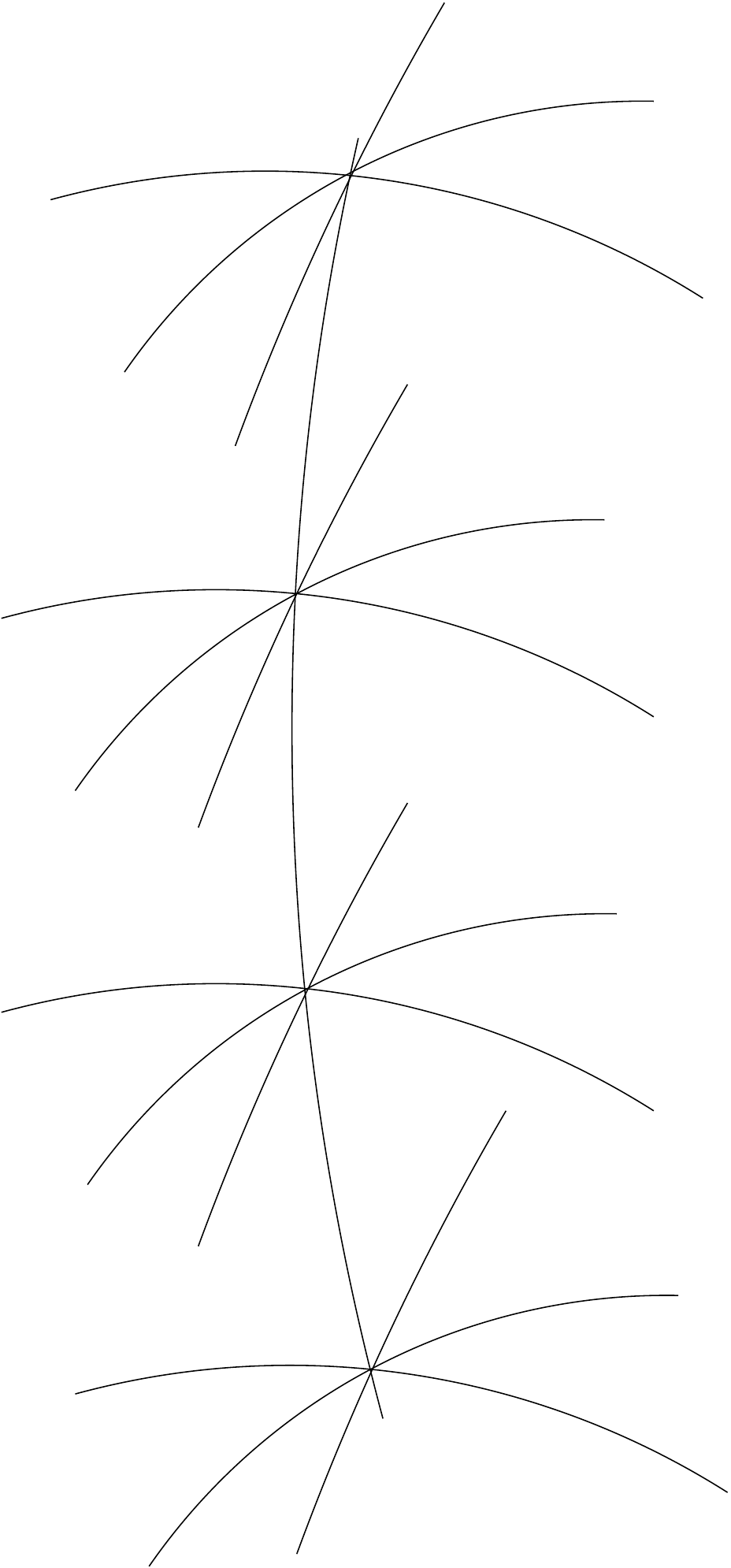}

\caption{\it On the left is the case $n=2$, $C$ split; on the right is the case $n=3$.  All irreducible components are projective lines. On the left, each projective line contains $p+1$ special points, and each special point lies on two projective lines. On the right, each projective line contains $p+1$ special points,  and each special point lies on $p+1$ projective lines. }
\end{figure}

We thank N. Perrin and T. Richarz for pointing out the paper \cite{BP} by Brion and Polo to us, and related discussions. We also thank 
U.~G\"ortz, B.~Howard, S.~Kudla and G.~Pappas for pointing out  mistakes in a first version of this paper, and  for their help in correcting them. This work was supported by the SFB/TR 45 `Periods, Moduli Spaces and Arithmetic of Algebraic Varieties' of the DFG (German Research Foundation).
\medskip

{\bf Notation} Throughout the paper, $p$ denotes an odd prime number. Furthermore, $E$ is a ramified quadratic extension of $\BQ_p$; the action of the non-trivial automorphism in $\Gal(E/\BQ_p)$ is denoted by $a\mapsto \bar a$.

\section{The moduli space}\label{section.Themodulispace} 
We denote by $E$ a ramified quadratic extension of $\BQ_p$. We fix a uniformizer $\pi$ of $E$ such that 
$\pi_0 = \pi^2 \in \BQ_p$ is a uniformizer. Since $p\neq 2$, this is always possible. We denote by $\BF$ an 
algebraic closure of $\BF_p$, and by $W=W(\BF)$ its ring of Witt vectors and by $W_\BQ$ its fraction
field. We denote by $\sigma$ the Frobenius automorphism on $\BF$, on $W$, and on $W_\BQ$.

Let $\breve{E} = W_\BQ \otimes_{\BQ_p} E$ and let $\CO_{\breve{E}} = W \otimes_{\BZ_p} \CO_E$ be its ring
of integers. Let $\sigma = \sigma \otimes \rm id$ on $\breve{E}$. We denote by $\psi_0 : E \lra \breve{E}$
the natural embedding, and by $\psi_1 = \psi_0 \, \circ \, ^{-}$ its conjugate.

Let $\rm Nilp$ be the category of $\CO_{\breve{E}}$-schemes $S$ such that $\pi \cdot \CO_S$ is a locally
nilpotent ideal sheaf. For $S \in \rm Nilp$, we denote by $\bar S = S \times_{\rm Spec\, \CO_{\breve{E}}} 
\rm Spec \, \BF$ its reduction modulo $\pi$.

Let $(\BX, \iota_\BX)$ be a fixed supersingular $p$-divisible group of dimension $n$ and height $2n$ over $\BF$
with an action $\iota_\BX: \CO_E \lra \rm End(\BX)$. Let $\lambda_\BX$ be a principal quasi-polarization such that its
Rosati involution induces on $\CO_E$ the non-trivial automorphism over $\BQ_p$.  

Let $N$ be the rational Dieudonn\'{e} module of $\BX$. Then $N$ has an action of $E$ and a skew-symmetric 
$W_\BQ$-bilinear form $\langle\, ,\, \rangle,$ satisfying
\begin{equation*}
\begin{aligned}
 \langle F x,y \rangle &= \langle x, Vy \rangle^{\sigma} \, ,\quad\\
 \langle \iota (a) x,y \rangle &= \langle x, \iota (\bar a) y \rangle \, , \qquad a \in E.
 \end{aligned}
\end{equation*}
We denote by $\Pi$ the endomorphism induced by $\pi$ on $N$.

Fix $n \geq 2$. Let $\CN$ be the set-valued functor on $\rm Nilp$ which associates to $S\in \rm Nilp$ the set of 
isomorphism classes of quadruples $(X, \iota, \lambda, \varrho)$. Here $X$ is a $p$-divisible group over $S$,
and $\iota : \CO_E \lra \End (X)$ is a homomorphism satisfying the following two conditions
\begin{equation}
\begin{aligned}
 {\rm char} (\iota (a) \vert {\rm Lie} \, X) &= (T - \psi_0(a)) \cdot (T - \psi_1 (a))^{n-1} \, .\\
\bigwedge^n (\iota(\pi)-\pi \vert {\rm Lie} \, X) &= 0\, , \quad \bigwedge^2 (\iota(\pi)+\pi \vert {\rm Lie} \, X) = 0 \, , \text{ if $n \geq 3$.}
\end{aligned}
\end{equation}
The condition on the characteristic polynomial is the {\it Kottwitz condition}, the condition on the wedge products is the {\it Pappas condition}.
Furthermore, $\lambda : X \lra {X}^\vee$ is a principal quasi-polarization whose associated Rosati involution
induces on $\CO_E$ the non-trivial automorphism over $\BQ_p$. Finally, $\varrho : X \times_S \bar S \lra
\BX \times_{\rm Spec \, \BF} \bar S$ is a $\CO_E$-linear quasi-isogeny such that 
$\lambda$ and $\varrho^* (\lambda_\BX)$ differ locally on $\bar S$ by a factor in $\BQ_p^{\times}$. An 
isomorphism between two quadruples $(X, \iota, \lambda, \varrho)$ and $(X', \iota', \lambda', \varrho')$ is
given by an $\CO_E$-linear isomorphism $\alpha : X \lra X'$ such that 
$\varrho' \circ (\alpha \times_S \bar S) = \varrho$ and such that $\alpha^*(\lambda')$ is a 
$\BZ^\times_p$-multiple of $\lambda$.
\begin{proposition}
The functor $\CN$ is representable by a separated formal scheme $\CN$, locally formally of finite type over 
${\rm Spf}\, \CO_{\breve{E}}$. Furthermore, $\CN$ is flat over $\CO_{\breve{E}}$. It is
formally smooth over $\CO_{\breve{E}}$ in all points of the special fiber except those corresponding to points 
$(X, \iota, \lambda, \varrho) \in \CN (\BF)$, where $ {\rm Lie}(\iota (\pi))= 0$ {\rm (these form 
an isolated set of points).} 
\end{proposition}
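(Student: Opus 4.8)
The plan is to analyze the formal scheme $\CN$ via the theory of displays/Dieudonn\'e modules of the quadruples $(X,\iota,\lambda,\varrho)$, reducing everything to a concrete description of the local deformation rings. First I would set up the standard dictionary: over $S\in\mathrm{Nilp}$, the data $(X,\iota,\lambda)$ together with $\varrho$ is equivalent, by Grothendieck--Messing and the Rapoport--Zink representability machinery (cf.\ \cite{RZ}), to a chain of lattices in $N$ stable under $F$, $V$, $\Pi=\iota(\pi)$ and self-dual up to scalar for $\langle\,,\,\rangle$; representability of $\CN$ by a separated formal scheme locally formally of finite type over $\mathrm{Spf}\,\CO_{\breve E}$ then follows as in \cite{RZ}, once one checks the Kottwitz and Pappas conditions are closed conditions (the Kottwitz condition is a condition on the characteristic polynomial of $\iota(\pi)$ on $\mathrm{Lie}\,X$, hence closed and open on the locus where it can hold; the Pappas wedge conditions are manifestly closed).

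For flatness, the key step is to exhibit, at each $\BF$-point $x=(X,\iota,\lambda,\varrho)$, the complete local ring $\widehat{\CO}_{\CN,x}$ as a quotient of a power series ring over $\CO_{\breve E}$ by an explicit ideal, and then to show this ring is $\CO_{\breve E}$-flat of the expected dimension. Concretely, via Grothendieck--Messing the deformations of $x$ are governed by lifts of the Hodge filtration $\mathrm{Lie}\,X\subset M\otimes k$ to a direct summand of $M\otimes R$ (for $R$ an Artinian $\CO_{\breve E}$-algebra) that is isotropic for the polarization and compatible with the $\CO_E$-action and the Kottwitz and Pappas conditions. One computes the tangent space: the $\CO_E$-action splits $\mathrm{Lie}\,X$ into the $\psi_0$- and $\psi_1$-eigenspaces (of dimensions $1$ and $n-1$ by Kottwitz), and $\iota(\pi)$ swaps, up to $\pi$, these pieces; writing out the condition $\wedge^n(\iota(\pi)-\pi)=0$ and $\wedge^2(\iota(\pi)+\pi)=0$ on the universal lift yields explicit equations in the deformation parameters. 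The main obstacle is precisely this local computation: one must verify that the resulting equations cut out a ring that is a complete intersection, flat over $\CO_{\breve E}$, and of the right dimension. I expect that at a point with $\mathrm{Lie}(\iota(\pi))\neq 0$ the Pappas condition $\wedge^2(\iota(\pi)+\pi)=0$ is automatically satisfied in a neighborhood and the deformation functor is formally smooth (the Kottwitz condition alone, for signature $(1,n-1)$, gives an unobstructed deformation problem of dimension $n-1$), while at a point with $\mathrm{Lie}(\iota(\pi))=0$ the operator $\iota(\pi)$ acts as zero on the tangent space and the defining equations degenerate, producing a singular (but still flat) local ring.

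Finally, for the statement about the smooth locus, I would argue: the formal smoothness over $\CO_{\breve E}$ at a point $x$ is equivalent to the surjectivity/bijectivity of the obstruction-free lifting, which by the Grothendieck--Messing computation holds exactly when the normal vector $\iota(\pi)$ acts invertibly enough on the relevant graded pieces -- equivalently when $\mathrm{Lie}(\iota(\pi))\neq 0$. To see the locus where $\mathrm{Lie}(\iota(\pi))=0$ is a finite (isolated) set of points, one uses the lattice description: such a point corresponds to a Dieudonn\'e lattice $M$ with $\Pi M\subseteq VM$ (so that $\iota(\pi)$ kills $\mathrm{Lie}\,X=M/VM$); combined with self-duality and the $F,V$-stability this pins down $M$ up to finitely many choices once $\varrho$ has fixed height, because the condition forces $M$ to be very close to a fixed ``minimal'' lattice -- there are only finitely many lattices between $\Pi M$-type bounds. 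The hard part throughout is organizing the explicit deformation-theoretic computation cleanly enough to read off flatness and the precise non-smooth locus; once that is in hand, the remaining assertions are formal consequences. I would defer the full details of the local equations to a later section where the Bruhat--Tits stratification is set up, and here give only the computation of the tangent space and the dimension count sufficient to conclude flatness by the standard criterion (a quotient of a regular local ring of dimension $d+1$ by one equation, with special fiber of dimension $d$, is $\CO_{\breve E}$-flat).
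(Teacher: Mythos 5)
The paper's proof is just two citations: representability is delegated to \cite{RZ}, and the flatness and the description of the formal smoothness locus are delegated to \cite{P}, 4.5. Your proposal outlines what would amount to a self-contained re-derivation of Pappas's theorem via Grothendieck--Messing theory and an analysis of the local model. That is indeed the method used in \cite{P}, so your strategy is the right one; but the heart of your sketch --- ``verify that the resulting equations cut out a ring that is a complete intersection, flat over $\CO_{\breve E}$, and of the right dimension'' --- is the entire substance of Pappas's paper, and you explicitly defer it. In particular, your assertion that at a point with $\mathrm{Lie}(\iota(\pi))\neq 0$ the wedge condition $\wedge^2(\iota(\pi)+\pi)=0$ is ``automatically satisfied in a neighborhood'' and that ``the Kottwitz condition alone, for signature $(1,n-1)$, gives an unobstructed deformation problem'' is exactly the kind of claim that requires the local-model analysis to justify: in the ramified situation the naive local model cut out by the Kottwitz determinant condition alone is \emph{not} flat over $\CO_{\breve E}$, which is precisely why the wedge conditions were introduced, so one cannot treat them as a harmless afterthought near any point of the special fiber without proof. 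Similarly, the final flatness criterion you invoke (a quotient of a regular local ring by a single equation whose special fiber has the expected dimension) presupposes that the defining ideal is principal, which is not a priori evident from the several wedge identities. So: the proposal identifies the correct framework (Rapoport--Zink representability plus Grothendieck--Messing and the local model), but it replaces the main content by expectations and a deferred computation; the cleanest and honest route, adopted by the paper, is to cite \cite{P}.
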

\begin{proof}
The representability follows from \cite{RZ}. The assertions concerning flatness and formal smoothness follow from  \cite{P}, 4.5.  
\end{proof}
We denote by $\CN^0$ the open and closed formal subscheme of $\CN$ where the height of $\rho$ is zero. Let $\bar \CN^0$ be the reduction modulo $\pi$ of $\CN^0$. Then the moduli functor  $\bar \CN^0$  has a simpler description. Namely, $\bar \CN^0$ parametrizes isomorphism
classes of quadruples $(X, \iota, \lambda, \varrho)$, where $X$ is a $p$-divisible group of height $2n$ and 
dimension $n$, and where $\iota : \CO_E \lra \End \, (X)$ is an action of $\CO_E$ on $X$, such that for
$n \geq 3$
\begin{equation}\label{wedcond}
 \bigwedge^2 (\iota(\pi) \vert {\rm Lie} \, X) = 0 \, ,
\end{equation}
and where $\lambda$ is a principal quasi-polarization whose Rosati involution induces on $\CO_E$ the non-trivial
automorphism over $\BQ_p$, and where $\varrho : X \lra \BX \times_{\rm Spec \, \BF} S$ is a quasi-isogeny of
height $0$ which is $\CO_E$-linear and such that $\varrho^*(\lambda_\BX)$ and $\lambda$ differ locally on
$S$ by a factor in $\BZ^\times_p$ (we note that $\psi_0$ and $\psi_1$ are identical modulo $\pi$).
\begin{proposition}\label{pointdescr}
 Recall the rational Dieudonn\'{e} module $N$, with its action by $E$ and its skew-symmetric form 
$\langle\, ,\, \rangle$. Associating to a point in $\bar \CN^0 (\BF)$ its Dieudonn\'e module defines a bijection of $\bar \CN^0 (\BF)$  with the set of $W$-lattices 
\begin{equation*}
 \{ M \subset N \mid M^\vee = M, \Pi \, M \subset M, \, pM \subset V M \subset^n M, VM \subset^{\leq 1} VM + \Pi \, M \} \, .
\end{equation*}
Here $M^\vee=\{x\in N\mid \langle x, M\rangle\subset W \}$ denotes the dual lattice of $M$ w.r.t. $\langle \,,\, \rangle$, and the symbol $n$ resp. $\leq 1$ over 
the inclusion sign means that the quotient is a $\BF$-vector space of dimension  $n$, resp. $\leq 1$.
\end{proposition}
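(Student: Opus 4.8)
The plan is to use Dieudonn\'e theory to translate the moduli problem $\bar\CN^0(\BF)$ into a lattice-theoretic condition, and then check that the Kottwitz, Pappas, polarization, and height-$0$ conditions become exactly the four displayed conditions. First I would recall that, since $S = \Spec\BF$, covariant Dieudonn\'e theory attaches to a $p$-divisible group $X$ of height $2n$ and dimension $n$ over $\BF$ its Dieudonn\'e module $M$, a free $W$-module of rank $2n$ together with the $\sigma$-linear operator $F$ and the $\sigma^{-1}$-linear operator $V$ with $FV = VF = p$; the quasi-isogeny $\varrho$ of height $0$ identifies $M \otimes_W W_\BQ$ with $N$ (compatibly with $F$, $V$, and the $E$-action via $\iota$), so that $M$ becomes a $W$-lattice in $N$ stable under $\Pi$ (this is the condition $\Pi M \subset M$, coming from $\iota$ being an $\CO_E$-action rather than just an $E$-action). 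The principal quasi-polarization $\lambda$, up to the permitted $\BZ_p^\times$-scaling, translates into the perfectness condition $M^\vee = M$ for the symplectic form $\langle\,,\,\rangle$ on $N$; here one uses that a principal polarization gives a perfect pairing on $M$ and that $\varrho^*\lambda_\BX$ and $\lambda$ agree up to $\BZ_p^\times$, so the induced form on $M$ is, up to a $W^\times$-unit (which can be absorbed), the restriction of $\langle\,,\,\rangle$.

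Next I would handle the dimension and Lie algebra conditions. Under covariant Dieudonn\'e theory, $\Lie X \cong M/VM$ (or $M/FM$ depending on the normalization — I would fix the convention so that this holds), which forces $VM \subset M$ and $\dim_\BF(M/VM) = n$, i.e. $M \subset^n VM$ is of colength $n$; combined with $FV = p$ this gives $pM \subset VM \subset^n M$, matching the displayed chain $pM \subset VM \subset^n M$. The remaining condition $VM \subset^{\le 1} VM + \Pi M$ is the Dieudonn\'e-module incarnation of the wedge condition \eqref{wedcond}, namely $\bigwedge^2(\iota(\pi) \mid \Lie X) = 0$: the operator $\iota(\pi)$ acts on $\Lie X = M/VM$ as $\Pi$, and the image of $\Pi$ on $M/VM$ is $(VM + \Pi M)/VM$; saying the second exterior power of this operator vanishes is saying this image has dimension $\le 1$ over $\BF$, which is exactly $VM \subset^{\le 1} VM + \Pi M$. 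One should also note, as the text does parenthetically, that modulo $\pi$ the two embeddings $\psi_0, \psi_1$ coincide, so that on $\bar\CN^0$ the Kottwitz condition on the characteristic polynomial becomes vacuous beyond fixing the dimension — this is why only the wedge condition survives. For $n = 2$ the condition $\bigwedge^2(\iota(\pi)+\pi\mid\Lie X)=0$ is automatic (it lives in $\bigwedge^2$ of a space that is a priori $2$-dimensional but $\iota(\pi)\pm\pi$ are nilpotent, hence their ranks add up to $\le 2$ — one must check this is not a real constraint), consistent with the displayed condition being stated without restriction on $n$.

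Finally I would argue that the map $(X,\iota,\lambda,\varrho) \mapsto M$ is a \emph{bijection} onto the displayed lattice set, not merely well-defined. Injectivity: two quadruples with the same Dieudonn\'e lattice $M$ in $N$ are identified by the isomorphism of $p$-divisible groups that Dieudonn\'e theory produces from the identity on $M$, and this isomorphism is automatically $\CO_E$-linear and polarization-compatible because those structures are encoded in $(N, \Pi, \langle\,,\,\rangle)$ and preserved. Surjectivity: given a lattice $M$ in the displayed set, Dieudonn\'e theory (anti)equivalence produces a $p$-divisible group $X$ over $\BF$ with an isomorphism $M\otimes W_\BQ \cong N$; the stability under $\Pi$ gives $\iota$, the self-duality $M^\vee = M$ relative to $\langle\,,\,\rangle$ gives a principal polarization $\lambda$ with the right Rosati involution, the height-$0$ identification gives $\varrho$, the colength-$n$ condition gives dimension $n$, and the $\le 1$ condition gives the wedge condition — so $(X,\iota,\lambda,\varrho)$ lies in $\bar\CN^0(\BF)$.

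The main obstacle I anticipate is bookkeeping the normalizations: whether one uses covariant or contravariant Dieudonn\'e theory, whether $\Lie X$ is $M/VM$ or $M/FM$, and how the sign and scaling conventions in the polarization interact with the form $\langle\,,\,\rangle$ (the relation $\langle Fx,y\rangle = \langle x,Vy\rangle^\sigma$ must come out correctly from the chosen convention). One has to be careful that the $\BZ_p^\times$- versus $\BQ_p^\times$-scaling ambiguity in $\lambda$ on the nose gives the perfectness $M^\vee = M$ rather than $M^\vee = c M$ for some scalar $c$; since on $\bar\CN^0$ the height is $0$, the scalar is forced into $\BZ_p^\times$ and then into $W^\times$, and perfectness can be arranged. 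Once the conventions are pinned down, each of the four conditions is a short, essentially formal translation, and the bijection is the standard consequence of the Dieudonn\'e equivalence over a perfect field.
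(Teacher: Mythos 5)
Your proposal is correct and takes essentially the same route as the paper: pass to the Dieudonn\'e lattice $M\subset N$ via $\varrho$, and translate the $\CO_E$-action, principal polarization (with the permitted $\BZ_p^\times$-scaling), dimension, and Pappas wedge condition into $\Pi M\subset M$, $M^\vee=M$, $VM\subset^n M$, and $VM\subset^{\le 1}VM+\Pi M$ respectively, using $\Lie X\cong M/VM$. The one point you gloss over in the converse direction is that the displayed conditions do not explicitly include $FM\subset M$; as the paper notes, this is automatic (from $M^\vee=M$, $VM\subset M$, and $\langle Fx,y\rangle=\langle x,Vy\rangle^\sigma$ one gets $FM\subset M^\vee=M$), and it is needed both to make $M$ a genuine Dieudonn\'e lattice and to justify $pM=VFM\subset VM$, a containment you cite before having $F$-stability in hand.
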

\begin{proof}
 Let $(X, \iota, \lambda, \varrho) \in \bar \CN^0 (\BF)$. The quasi-isogeny $\varrho$ defines the inclusion of the 
Dieudonn\'{e} module of $X$ as a Dieudonn\'{e} lattice $M$ in $N$. Since $\lambda$ is a principal polarization, $M$ is a selfdual lattice. The stability under $\iota (\CO_E)$ is 
equivalent to the stability of $M$ under $\Pi$. Since ${\rm Lie} \, X = M / V M$, the condition \eqref{wedcond} says
that the rank of the endomorphism $\Pi \vert (M / V M)$ is $0$ or $1$. This shows that $M$ satisfies all properties. Conversely, if
$M$ has these properties, it is a Dieudonn\'{e} lattice since automatically $F M \subset M$, and hence $M$ corresponds to a point
in $\bar \CN^0 (\BF)$.
\end{proof}
Since $N$ is supersingular, all slopes of the $\sigma$-linear operator $ \Pi \, V^{-1} : N \lra N$ are zero. Let $\epsilon\in \BZ_p^\times$ such that $\pi_0=\epsilon p$, and let $\eta\in W^\times$ such that $\eta^2=\epsilon^{-1}$. The $\sigma$-linear operator $\tau=\eta\,\Pi\, V^{-1}: V\to V$ has all slopes equal to zero. 
Setting $C = N^\tau$, we obtain a $\BQ_p$-vector space with an isomorphism
\begin{equation*}
 C \otimes_{\BQ_p}W_\BQ \tilde{\lra} N
\end{equation*}
such that ${\rm id}_C \otimes \sigma$ corresponds to $\tau$.
Then for $x,y \in C$
\begin{equation*}
 \begin{aligned}
  \langle x,y \rangle = \langle \tau (x), \tau (y)\rangle 
& = \langle \eta\,\Pi \, V^{-1}x, \eta\,\Pi \, V^{-1}y\rangle = \epsilon^{-1} \pi_0 \langle V^{-1} x, V^{-1} y \rangle \\
& = \epsilon^{-1} \pi_0  \cdot p^{-1} \cdot \langle x,y \rangle^\sigma\\
& = \langle x,y \rangle^\sigma \, .
 \end{aligned}
\end{equation*}
Hence the restriction of $\langle \,, \, \rangle$ to $C$ defines a skew-symmetric form with values in $\BQ_p$ 
such that
\begin{equation*}
 \langle \Pi \, x,y \rangle= - \langle x, \Pi \, y \rangle  \, , \quad x,y \in C \, .
\end{equation*}
The action of $\Pi$ defines the structure of an $n$-dimensional $E$-vector space on $C$, and
\begin{equation}
 (x,y) := \langle \Pi \, x,y \rangle + \langle x,y \rangle \cdot \pi
\end{equation}
defines a hermitian form on $C$ satisfying
\begin{equation}\label{spur}
 \langle x,y \rangle = \frac{1}{2} {\rm Tr}_{E/\BQ_p} (\pi^{-1}( x,y)) \, .
\end{equation}
From now on we write $\pi$ instead of $\Pi$ for the action on $C$.

Recall from \cite{RZ} the linear algebraic group $J$ over $\BQ_p$, with 
\begin{equation}\label{DefJ}
\begin{aligned}\
 J(R) = \{g\in {\rm GL}_{R \otimes W_\BQ} (N \otimes R) &\mid g F = Fg, g \Pi = \Pi g \, ,\\
 \langle g x, g y \rangle &= c(g) \langle x,y \rangle , c(g) \in (R \otimes W_\BQ)^\times, \forall x,y \} \, ,
 \end{aligned}
\end{equation}
for any $\BQ_p$-algebra $R$.
\begin{lemma}
 The algebraic group $J$ is canonically isomorphic to the group of unitary similitudes ${\rm GU}(C)$ of $(C,(\,,\,))$.
\end{lemma}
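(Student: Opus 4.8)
The plan is to unwind the definition of $J$ in \eqref{DefJ} and match it, element by element, with the unitary similitude group of $(C,(\,,\,))$, using the isomorphism $C\otimes_{\BQ_p}W_\BQ\xrightarrow{\sim}N$ constructed just above. The key point is that the conditions cutting out $J(R)$ — commuting with $F$, commuting with $\Pi$, and preserving $\langle\,,\,\rangle$ up to a scalar — are exactly the linear-algebra shadows of being an $E$-linear similitude of the hermitian form $(\,,\,)$.

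First I would observe that for $R=\BQ_p$ (and then for a general $\BQ_p$-algebra $R$ by the same argument applied to $C\otimes R$), the condition $gF=Fg$ together with $g\Pi=\Pi g$ forces $g$ to be defined over $C$: indeed $F=\eta^{-1}\Pi\,V\cdot(\text{stuff})$ — more precisely $\tau=\eta\,\Pi V^{-1}$ has all slopes zero and $C=N^\tau$, so commuting with $F$ and $\Pi$ is equivalent to commuting with $\tau$, hence to preserving the $\BQ_p$-structure $C\subset N$. So $g$ restricts to an element $g_0\in\GL_R(C\otimes R)$, and $g\mapsto g_0$ is the desired identification of underlying groups; conversely any $\BQ_p$-linear automorphism of $C\otimes R$ extends $W_\BQ$-linearly to $N\otimes R$ and commutes with $\tau$, hence with $F$. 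Next, $g\Pi=\Pi g$ on $N$ restricts to $g_0\pi=\pi g_0$ on $C$, i.e. $g_0$ is $E$-linear. Finally I would translate the similitude condition: $\langle gx,gy\rangle=c(g)\langle x,y\rangle$ for all $x,y\in N\otimes R$ restricts, on $C\otimes R$, to $\langle g_0x,g_0y\rangle=c(g)\langle x,y\rangle$ with $c(g)\in R^\times$ (the scalar must lie in $R^\times$ because both sides are $R$-valued and $g_0$ is $R$-linearly invertible; the $W_\BQ$-semilinear compatibility $\langle x,y\rangle=\langle x,y\rangle^\sigma$ on $C$ pins $c(g)$ down to $R$). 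Combining this with $E$-linearity and formula \eqref{spur}, which recovers $\langle\,,\,\rangle$ from $(\,,\,)$ via $\langle x,y\rangle=\tfrac12\,{\rm Tr}_{E/\BQ_p}(\pi^{-1}(x,y))$, one gets that $(g_0x,g_0y)=c(g)\,(x,y)$: scaling $\langle\,,\,\rangle$ by $c(g)$ scales $(\,,\,)$ by the same factor, using $(x,y)=\langle\pi x,y\rangle+\langle x,y\rangle\pi$ and $E$-linearity of $g_0$. Hence $g_0\in\GU(C)(R)$ with the same multiplier, and the correspondence is inverted by the same formulas. This gives a natural isomorphism $J\xrightarrow{\sim}\GU(C)$ of group schemes over $\BQ_p$.

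The main obstacle, and the only place needing genuine care rather than bookkeeping, is the first step: showing that commuting with $F$ (an $F$-semilinear operator) together with commuting with $\Pi$ is equivalent to preserving $C=N^\tau$, and in particular that the multiplier $c(g)$, a priori in $(R\otimes W_\BQ)^\times$, is forced into $R^\times$. Both follow from the slope-zero property of $\tau$ (equivalently, from the supersingularity of $N$ having been used to produce the descent $N=C\otimes W_\BQ$ with $\tau={\rm id}\otimes\sigma$): once $g$ commutes with $F$ and $\Pi$ it commutes with $\tau$, so it is $\tau$-fixed-point preserving, i.e. restricts to $C$, and then $c(g)$ is $\sigma$-invariant by the semilinearity relation $\langle Fx,Fy\rangle$-type identities, forcing $c(g)\in R^\times$. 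Everything else — $E$-linearity, translating between $\langle\,,\,\rangle$ and $(\,,\,)$ — is a direct and short computation already modeled by the displayed computation verifying that $\langle\,,\,\rangle$ restricts to a $\BQ_p$-valued form on $C$.
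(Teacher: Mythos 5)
Your proof is correct and takes essentially the same route as the paper's: restrict along the $\tau$-fixed subspace $C=N^\tau$ (using that commuting with $F$ and $\Pi$ is equivalent to commuting with $\tau$), observe that the multiplier $c(g)$ is thereby forced into $R^\times$, and translate between $\langle\,,\,\rangle$ and $(\,,\,)$ via \eqref{spur}. The one difference is scope: the paper explicitly contents itself with proving the isomorphism on $\BQ_p$-points, whereas you sketch the argument for a general $\BQ_p$-algebra $R$; this is a genuine (if modest) strengthening, and the additional point you would need to make explicit is that $(R\otimes_{\BQ_p}W_\BQ)^\sigma=R$ for any $\BQ_p$-algebra $R$ (which follows by tensoring the exact sequence $0\to\BQ_p\to W_\BQ\xrightarrow{\sigma-1}W_\BQ$ with the flat $\BQ_p$-algebra $R$), so that commuting with $\tau$ indeed descends $g$ to $\GL_R(C\otimes R)$.
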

\begin{proof}
 We content ourselves with showing that the restriction map $g \mapsto g \vert _C$ induces an isomorphism
\begin{equation*}
 J(\BQ_p) \simeq {\rm GU}(C) (\BQ_p) \, .
\end{equation*}
This map indeed maps $ J(\BQ_p)  $ into $ {\rm GU}(C)(\BQ_p)$ since any $g\in J(\BQ_p)$ commutes with $\tau$ and it follows from the discussion above that $c(g)\in \BQ_p^{\times}$. This and the definition of the hermitian form  imply that  $g \vert _C\in {\rm GU}(C) (\BQ_p) =\{ g\in {\rm GL}_{E} (C) \mid  ( g x, g y ) = \gamma(g) ( x,y ) , \gamma(g) \in \BQ_p ^\times, \forall x,y \} $. Conversely, suppose that $g \in {\rm GU}(C) (\BQ_p)$. We extend it linearly to an endomorphism of $N=C\otimes W_\BQ$ by setting $g(v\otimes a)=a\cdot g(v)$. By construction it commutes with $\pi$. It also commutes with $F$: For $v\in C$ we have $gF(v)=Fg(v)$ since $F=p\pi^{-1}\eta^{-1}$ on $C$. For $v\in C$ and $a\in W_{\BQ}$ we  have $gF(v\otimes a)= a^{\sigma}gF(v)=a^{\sigma}Fg(v)=F(g(v)\otimes a)=Fg(v\otimes a)$. The relation $ \langle g x, g y \rangle = c(g) \langle x,y \rangle $ follows from the corresponding relation in ${\rm GU}(C)$ and \eqref{spur}. This shows that the map is surjective. It also follows that it is injective since the linear extension of an element  $g$ in ${\rm GU}(C) (\BQ_p)$ to an endomorphism of $N$ is unique. 
\end{proof}
We denote by $J(\BQ_p)^o$ the subgroup of $J(\BQ_p)$ where the multiplier $c(g)$ is a unit. Then 
$J(\BQ_p)^o$ acts on $\bar \CN^0$, via
\begin{equation*}
 g : (X, \iota, \lambda, \varrho) \longmapsto (X, \iota, \lambda, g \circ \varrho) \, .
\end{equation*}

We may reformulate the description of $\bar \CN^0 (\BF)$ in Proposition \ref{pointdescr} in terms of the hermitian
vector space $C$ as follows. We extend the hermitian form on $C$ to $C \otimes_E \breve{E}$ by
\begin{equation*}
 (v \otimes a, v' \otimes a') = a \cdot a'^\sigma \cdot (v, v'), \, a, a' \in \breve{E} \, .
\end{equation*}
For a $\CO_{\breve{E}}$-lattice $M$ in $C \otimes_E \breve{E}$ we denote by  $M^\sharp=\{ x\in N\mid (x, M)\subset \CO_{\breve E}\}$ its dual w.r.t. to the form $(\,,\,)$. 
Then $M^\vee = M^\sharp$ for any $\CO_{\breve{E}}$-lattice $M$ in $C \otimes_E \breve{E}$, i.e., the duals w.r.t.
$\langle\, , \,\rangle$ or $(\,,\,)$ coincide.

\begin{proposition}\label{moduldescr}
 There is a bijection between $\bar \CN^0 (\BF)$ and the set of $\CO_{\breve{E}}$-lattices
\begin{equation*}
\begin{aligned}
 \CV (\BF) = \{ M \subset C \otimes_E \breve{E} &\mid M^\sharp = M \, ,\\
\Pi \tau (M) &\subset M \subset \Pi^{-1} \tau (M), \, M \subset^{\leq 1}\big( M + \tau (M)\big) \} \, .
\end{aligned}
\end{equation*}\qed
\end{proposition}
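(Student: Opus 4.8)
The plan is to translate Proposition \ref{pointdescr} through the isomorphism $C \otimes_{\BQ_p} W_\BQ \xrightarrow{\sim} N$ that intertwines $\mathrm{id}_C \otimes \sigma$ with $\tau = \eta\,\Pi V^{-1}$. First I would fix this identification and record the dictionary between the operators involved. Under it, $V$ acts on $N$; since $\tau = \eta \Pi V^{-1}$ we have $V = \eta\,\Pi\,\tau^{-1}$ as $\sigma^{-1}$-linear operators on $N$, equivalently $V^{-1} = \eta^{-1}\,\tau\,\Pi^{-1}$ (here $\eta \in W^\times$ is a scalar, so up to the unit $\eta$ it plays no role in lattice conditions, only the $\CO_{\breve E}$-module structure matters). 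The key point is that a $W$-lattice $M \subset N$ is automatically an $\CO_{\breve E}$-lattice once it is stable under $\Pi$, because $\CO_{\breve E} = W[\Pi]$ (recall $\Pi^2 = \pi_0 \in \BZ_p^\times \cdot p$), and conversely every $\CO_{\breve E}$-lattice in $C\otimes_E \breve E$ is such an $M$. So the conditions ``$\Pi M \subset M$'' already present in Proposition \ref{pointdescr} and the requirement of Proposition \ref{moduldescr} that $M$ be an $\CO_{\breve E}$-lattice match up exactly.

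Next I would check the four remaining conditions one at a time. The selfduality $M^\vee = M$ becomes $M^\sharp = M$ by the remark immediately preceding the statement that $M^\vee = M^\sharp$ for every $\CO_{\breve E}$-lattice. For the lattice-chain conditions, I would rewrite $VM$ in terms of $\tau$ and $\Pi$. Using $V = \eta\,\Pi\,\tau^{-1}$ (up to the unit $\eta$, which I suppress), we get $VM = \Pi\,\tau^{-1}(M)$, hence also, applying $\tau$ and noting $\tau$ is a $\sigma$-linear bijection, $\tau(VM) = \pi_0\,\tau(\tau^{-1}(M))\cdot(\text{unit}) $— more carefully, $VM = \Pi\tau^{-1}(M)$ so $\tau(M) = \Pi^{-1}\,VM \cdot(\text{unit})$, i.e.\ $\Pi\,\tau(M) = VM$ up to a unit, equivalently $VM = \Pi\tau(M)$ after absorbing $\eta$ into the identification. (I would be careful here: the correct bookkeeping is $V^{-1} = \eta^{-1}\tau\Pi^{-1}$, so $V^{-1}M = \tau(\Pi^{-1}M) = \tau(\Pi M)/\pi_0 = \Pi^{-1}\tau(M)$ up to units, giving $VM = \Pi\,\tau(M)^{\,}$ appropriately interpreted.) Then:
the condition $pM \subset VM$ becomes $pM \subset \Pi\tau(M)$, i.e.\ $\Pi^{-1}pM \subset \tau(M)$; since $\Pi^{-1}p = \Pi\cdot p\,\pi_0^{-1}$ is $\Pi$ times a unit, and $M$ is $\Pi$-stable, this is equivalent to $\Pi M \subset \tau(M)$, i.e.\ $\Pi\tau^{-1}(M)\subset M$ — wait, I want $\Pi\tau(M)\subset M$ and $M \subset \Pi^{-1}\tau(M)$. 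I would do this arithmetic cleanly: the chain $pM \subset VM \subset^n M$ says $VM$ sits between $pM$ and $M$ with $M/VM$ of dimension $n$; substituting $VM = \Pi\tau(M)$ (up to unit) and using $\Pi$-stability of $M$ and of $\tau(M)$ (as $\tau(M)$ is again an $\CO_{\breve E}$-lattice), $pM \subset \Pi\tau(M)$ is equivalent to $\Pi\tau(M)\subset M$ after multiplying by $\Pi^{-1}$ and absorbing $p = \pi_0\cdot(\text{unit}) = \Pi^2\cdot(\text{unit})$; and $\Pi\tau(M)\subset^n M$ is the same as $M \subset^n \Pi^{-1}\tau(M)\cdot(\text{unit})$, which after checking that the colength is automatically $\le 1$ by the last condition and symmetry considerations, and that selfduality forces the two ends to be dual, collapses to the pair $\Pi\tau(M)\subset M \subset \Pi^{-1}\tau(M)$. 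The last condition $VM \subset^{\le 1} VM + \Pi M$ becomes $\Pi\tau(M) \subset^{\le 1} \Pi\tau(M) + \Pi^2 M = \Pi\tau(M) + \pi_0 M$, equivalently (dividing by $\Pi$, using $\Pi$-stability) $\tau(M) \subset^{\le 1} \tau(M) + M$, which is the stated condition $M \subset^{\le 1} M + \tau(M)$ by applying the length-preserving bijection — here one uses that for lattices $A, B$ one has $\mathrm{length}(B/(A\cap B)) = \mathrm{length}((A+B)/A)$, and that $\tau$ preserves $W$-length of quotients.

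The main obstacle I expect is not any single step but the careful handling of the scalar $\eta$ and the unit $\epsilon$ (with $\pi_0 = \epsilon p$) throughout, making sure that ``up to a unit in $W^\times$'' never secretly changes a lattice — it does not, since all the conditions are about $\CO_{\breve E}$-submodules and $\eta,\epsilon^{\pm 1}$ are units — together with verifying that the colength-$n$ condition $pM\subset VM\subset^n M$ really is recovered from $\Pi\tau(M)\subset M \subset \Pi^{-1}\tau(M)$ rather than being an independent constraint; this requires noting that selfduality $M^\sharp = M$ together with $\Pi\tau(M)\subset M$ forces $M \subset (\Pi\tau(M))^\sharp = \Pi^{-1}\tau(M^\sharp) = \Pi^{-1}\tau(M)$ (using $(\Pi x, y) = -(x,\Pi y)$ and the compatibility of $\tau$ with the form), so that the two inclusions are equivalent to each other under duality and the total colength is forced, and then one checks the colength is exactly $n$ using $\mathrm{length}(\Pi^{-1}\tau(M)/\Pi\tau(M)) = \mathrm{length}(\Pi^{-1}M/\Pi M) = 2n$ and selfduality splitting this symmetrically as $n + n$. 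Once this is in place, the bijection of Proposition \ref{moduldescr} follows by composing the bijection of Proposition \ref{pointdescr} with the lattice dictionary, and it is immediate that it is functorial in the sense needed.
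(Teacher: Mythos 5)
Your approach is exactly what the paper intends (the paper offers no written proof — the \qed\ indicates the translation of Proposition \ref{pointdescr} through the identification $C\otimes_{\BQ_p}W_\BQ\cong N$ is left to the reader), and you reach the correct final conditions. But the middle of your argument contains a persistent direction error that you should clean up: from $\tau=\eta\Pi V^{-1}$ one gets $V^{-1}=\eta^{-1}\Pi^{-1}\tau$, hence $V=(\text{unit})\cdot\Pi\tau^{-1}$ and therefore $VM=\Pi\tau^{-1}(M)$ — \emph{not} $\Pi\tau(M)$, as you assert in the parenthetical ``correct bookkeeping'' step (from $V^{-1}M=\Pi^{-1}\tau(M)$ it does not follow that $VM=\Pi\tau(M)$; those are not inverse operations on lattices). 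The reason your final answer is nonetheless right is a hidden symmetry you never invoke explicitly: with the correct $VM=\Pi\tau^{-1}(M)$, the two conditions $pM\subset VM$ and $VM\subset M$ translate (after applying the length-preserving bijection $\tau$ and absorbing the unit $\Pi^{-1}p=\epsilon^{-1}\Pi$) to $\Pi\tau(M)\subset M$ and $M\subset\Pi^{-1}\tau(M)$ respectively, whereas with your incorrect $VM=\Pi\tau(M)$ the same two conditions translate to the same pair of inclusions but in the opposite order. Likewise $VM\subset^{\le 1}VM+\Pi M$ becomes $\tau^{-1}(M)\subset^{\le 1}\tau^{-1}(M)+M$ with the correct formula, which is equivalent to the stated $M\subset^{\le 1}M+\tau(M)$ only because $M$ and $\tau(M)$ are \emph{both} selfdual (using $\tau(M)^\sharp=\tau(M^\sharp)$), so that $[M+\tau(M):M]=[M+\tau(M):\tau(M)]$. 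That selfduality identity is also the key input to your closing observation that the colength-$n$ condition is automatic: $(\Pi^{-1}\tau(M))^\sharp=\Pi\tau(M)$, so the chain $\Pi\tau(M)\subset M\subset\Pi^{-1}\tau(M)$ of total $W$-colength $2n$ is split symmetrically by the selfdual $M$ into $n+n$. You should make both uses of this duality explicit, and fix the $\tau$ versus $\tau^{-1}$ slip so the derivation doesn't rely on an unacknowledged cancellation.
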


\section{Vertex lattices}\label{section.Vertexlattices}

In the next  section we will construct a decomposition of $\CV (\BF)$  into a disjoint sum of subsets
parametrized by vertex lattices in $C$. In the present section we analyze this concept. For a lattice $\Lambda$ in $C$, we denote by $\Lambda^\sharp=\{x\in C\mid (x, \Lambda)\subset \CO_E\}$ its dual lattice. 
\begin{definition}
 A \emph{vertex lattice} in $C$ is a $\CO_E$-lattice $\Lambda$ such that 
$\pi \Lambda \subset \Lambda^\sharp \subset \Lambda$. The dimension of the $\BF_p$-vector space $\Lambda / \Lambda^\sharp$
is called the \emph{type} of $\Lambda$.
\end{definition}

Let $\Lambda_1$ and $\Lambda_2$ be vertex lattices of type $t_1$, resp. $t_2$. Then $\Lambda_1 \subseteq \Lambda_2$
implies $t_1 \leq t_2$, with equality if and only if $t_1 = t_2$. Furthermore, $\Lambda_1 \cap \Lambda_2$ is
a vertex lattice if and only if $\Lambda_1^\sharp \subset \Lambda_2$ if and only if 
$\Lambda_2^\sharp \subset \Lambda_1$. Indeed,
\begin{equation*}
 (\Lambda_1 \cap \Lambda_2)^\sharp = \Lambda_1^\sharp + \Lambda_2^\sharp
\end{equation*}
is contained in $\Lambda_1 \cap \Lambda_2$ if and only if $\Lambda_1^\sharp \subset \Lambda_2$, and if the last
condition is satisfied, then
\begin{equation*}
 \pi  (\Lambda_1 \cap \Lambda_2) \subset (\Lambda_1 \cap \Lambda_2)^\sharp \, ,
\end{equation*}
hence $\Lambda_1 \cap \Lambda_2$ is a vertex lattice.

\begin{lemma}\label{typeeven}
The type of a vertex lattice  is an even integer.
\end{lemma}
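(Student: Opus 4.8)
The plan is to reduce the statement to a computation with a nondegenerate alternating form over $\BF_p$. Let $\Lambda$ be a vertex lattice in $C$ of type $t$, so that $\pi\Lambda \subset \Lambda^\sharp \subset \Lambda$. Set $\bar\Lambda = \Lambda/\Lambda^\sharp$, a vector space over $\BF_p = \CO_E/\pi\CO_E$ of dimension $t$ (the scalars act through $\CO_E/\pi\CO_E \cong \BF_p$ because $\pi\Lambda \subset \Lambda^\sharp$). First I would observe that the hermitian form $(\,,\,)$ on $C$, suitably rescaled, induces a well-defined pairing on $\bar\Lambda$. Concretely, recall from \eqref{spur} that $\langle x,y\rangle = \tfrac12 \mathrm{Tr}_{E/\BQ_p}(\pi^{-1}(x,y))$ is an alternating $\BQ_p$-bilinear form, and that $\Lambda^\sharp$ is exactly the dual of $\Lambda$ with respect to $(\,,\,)$; one checks (using that the duals with respect to $\langle\,,\,\rangle$ and $(\,,\,)$ are related by a power of $\pi$, together with $\pi\Lambda\subset\Lambda^\sharp$) that $\langle\,,\,\rangle$ takes values in $\BZ_p$ on $\Lambda$ and that $\Lambda^\sharp$ is contained in the $\langle\,,\,\rangle$-radical of $\Lambda$ modulo $p$, hence $\langle\,,\,\rangle \bmod p$ descends to $\bar\Lambda$.

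The key step is then to show that this induced alternating form on $\bar\Lambda$ is \emph{nondegenerate}. This should follow from the two inclusions $\Lambda^\sharp \subset \Lambda$ and $\pi\Lambda \subset \Lambda^\sharp$: an element $x\in\Lambda$ lying in the radical of the induced form means $\langle x,\Lambda\rangle \subset p\BZ_p$, which after chasing through the relation between $\langle\,,\,\rangle$, the trace form, and $(\,,\,)$, and using the maximality built into $\Lambda^\sharp$ being precisely the $(\,,\,)$-dual, forces $x \in \Lambda^\sharp$, i.e. $x = 0$ in $\bar\Lambda$. Once nondegeneracy is in hand, a nondegenerate alternating bilinear form over the field $\BF_p$ exists only on an even-dimensional space, so $t = \dim_{\BF_p}\bar\Lambda$ is even.

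I expect the main obstacle to be the bookkeeping in the previous paragraph: carefully tracking how $\Lambda^\sharp$ (defined via the hermitian form $(\,,\,)$ with values in $\CO_E$) interacts with the $\BZ_p$-valued alternating form $\langle\,,\,\rangle$ and its $\bmod\ p$ reduction, so as to prove that the radical of the induced form on $\bar\Lambda$ is exactly zero and not merely contained in something larger. The subtlety is that $\langle\,,\,\rangle$ and $(\,,\,)$ have different "duals" a priori, and one must use $\pi^2 = \pi_0 = \epsilon p$ to convert between scaling by $\pi$ and scaling by $p$; the condition $\pi\Lambda\subset\Lambda^\sharp$ is what guarantees the form is $\BZ_p$-integral on $\Lambda$ after the relevant normalization, while $\Lambda^\sharp\subset\Lambda$ guarantees it is nondegenerate after reduction. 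Everything else — the descent of the form and the parity conclusion — is then routine linear algebra over $\BF_p$.
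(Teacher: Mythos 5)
Your approach is correct in substance, but it is genuinely different from the paper's proof of this lemma, and there is one small scaling slip to flag. The paper proves Lemma~\ref{typeeven} by a pure index computation: defining $[L:M]:=[L:L\cap M]-[M:L\cap M]$ and noting $[L:M]=[M^\sharp:L^\sharp]$, one gets $[\Lambda:\Lambda^\sharp]=2[\Lambda:L]+[L:L^\sharp]$ for any lattice $L$; choosing $L$ self-dual (such $L$ exists since $E/\BQ_p$ is ramified, by Jacobowitz \cite{J}) makes the last term vanish, so the type is even. Your route instead builds a nondegenerate alternating $\BF_p$-bilinear form on $\Lambda/\Lambda^\sharp$ and invokes the parity of symplectic dimensions. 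That route works, and in fact the paper does essentially the same construction \emph{later}, in Section~\ref{section.BTstrata}: there the form $\langle\,,\,\rangle_V$ is defined as the image of $p\langle x,y\rangle$ in $\BF_p$ and shown nondegenerate on $V=\Lambda/\Lambda^\sharp$. So your proof buys extra structure (the symplectic form on $\Lambda/\Lambda^\sharp$) that the paper needs anyway, at the cost of being slightly more work than the bare index count; the paper's version is shorter but relies on the existence of a self-dual reference lattice as a black box.

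The slip: on a vertex lattice one has $\pi\Lambda\subset\Lambda^\sharp$, hence $(x,y)\in\pi^{-1}\CO_E$ for $x,y\in\Lambda$, and then \eqref{spur} gives $\langle x,y\rangle\in\tfrac12\,\mathrm{Tr}_{E/\BQ_p}(\pi^{-2}\CO_E)=p^{-1}\BZ_p$, not $\BZ_p$. So $\langle\,,\,\rangle$ is \emph{not} $\BZ_p$-valued on $\Lambda$; the correct normalization is $p\langle\,,\,\rangle$, which does take values in $\BZ_p$ on $\Lambda$ and whose reduction mod $p$ descends to $\Lambda/\Lambda^\sharp$ (vanishing on $\Lambda^\sharp\times\Lambda$ because $\Lambda^\sharp=\Lambda^\vee$, i.e.\ $\langle\Lambda^\sharp,\Lambda\rangle\subset\BZ_p$). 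With this normalization the radical computation goes exactly as you describe: $p\langle x,\Lambda\rangle\subset p\BZ_p$ iff $\langle x,\Lambda\rangle\subset\BZ_p$ iff $x\in\Lambda^\vee=\Lambda^\sharp$, so the induced form is nondegenerate and the type is even. Once you replace ``$\langle\,,\,\rangle$ is $\BZ_p$-valued'' by ``$p\langle\,,\,\rangle$ is $\BZ_p$-valued'' the argument is complete and correct.
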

\begin{proof}
We define the index $[L:M]$ of two arbitrary lattices in $C$ as 
$$[L:M]:=[L:L\cap M]-[M:L\cap M] .
$$ Then $[L:M]=-[M:L]$. For a third lattice $N$ we have $[L:M]=[L:N]+[N:M]$. Furthermore $[L:M]=[M^{\sharp}:L^{\sharp}]$.
Now let 
$\Lambda$ be a vertex lattice and let $L$ be any lattice. Then $[\Lambda:\Lambda^{\sharp}]=[\Lambda:L]+[L:\Lambda^{\sharp}]=[\Lambda:L]+[\Lambda:L^{\sharp}]=[\Lambda:L]+[\Lambda:L]+[L:L^{\sharp}]$. If we choose for $L$ a self-dual lattice (which exists since $E/\BQ_p$ is {\it ramified}, cf. \cite{J}) we obtain the claim.
\end{proof}

Recall that the discriminant of the hermitian space  $C$ is  the image of $(-1)^{n(n-1)/2}\det V$ in $\BQ_p^\times/{\rm Nm}_{E/\BQ_p}(E^\times)$. Note that, since $E/\BQ_p$ is ramified, this last group is generated by the units in $\BZ_p$. We call $C$ {\it split} if the discriminant is the trivial element in this group of order $2$, and {\it non-split} if the discriminant is non-trivial.

\begin{lemma}\label{type}
\noindent If $n$ is even and  $C$  is split,  then for 
 all even $t$ with $0\leq t\leq n$ there is a  vertex lattice of type $t$.
 
 \smallskip

\noindent If $n$ is even and $C$ is non-split,  then for 
 all even $t$ with $0\leq t\leq n-2$ there is a  vertex lattice of type $t$, but there is no vertex lattice of type $n$.
 
 \smallskip

\noindent If $n$ is odd,   then for 
 all even $t$ with $0\leq t\leq n-1$ there is a  vertex lattice of type $t$. 
\end{lemma}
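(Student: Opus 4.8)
The plan is to classify vertex lattices by their structure modulo $\pi$. The key observation is that if $\Lambda$ is a vertex lattice of type $t$, then $\Lambda^\sharp \supseteq \pi\Lambda$, so the hermitian form $(\,,\,)$ modulo scaling induces a well-defined structure on the $\BF_p$-vector space $\bar\Lambda := \Lambda/\Lambda^\sharp$ of dimension $t$. Concretely, reduction of the form $\pi^{-1}(\,,\,)$ (which takes values related to $\CO_E$) gives $\bar\Lambda$ the structure of a nondegenerate alternating $\BF_p$-bilinear space — this is where evenness of $t$ (Lemma \ref{typeeven}) reappears — or rather, one should check whether the induced form is alternating or symmetric; since $\pi$ is the uniformizer with $\bar\pi = -\pi$ and $p$ is odd, the relevant reduction is a nondegenerate $\BF_p$-\emph{symmetric} bilinear form on $\bar\Lambda$. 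First I would make this induced-form construction precise and record that conversely, starting from a self-dual $\CO_E$-lattice $L_0$ (which exists since $E/\BQ_p$ is ramified), every vertex lattice $\Lambda$ satisfies $\pi L_0 \subseteq$ something comparable, so all vertex lattices live between scalings of $L_0$.

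Next I would set up the explicit existence construction. Fix a self-dual lattice $L_0 = \bigoplus_{i=1}^n \CO_E e_i$ with an orthonormal-type basis (up to adjusting by a unit, using that over a ramified extension one can diagonalize the unimodular hermitian form). For each even $t = 2m$, build $\Lambda$ by taking $\Lambda = \CO_E e_1 + \cdots + \CO_E e_{n-t} + \pi^{-1}(\CO_E e_{n-t+1} + \cdots)$ on a suitable isotropic-pair decomposition: split off $m$ hyperbolic planes inside $L_0$ and enlarge $L_0$ by $\pi^{-1}$ on a Lagrangian of each plane. One checks directly that $\pi\Lambda \subseteq \Lambda^\sharp \subseteq \Lambda$ and that $\Lambda/\Lambda^\sharp$ has dimension $2m$. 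The discriminant of $C$ enters through whether $L_0$ contains $m$ orthogonal hyperbolic planes with the remaining $n-2m$ dimensions still admitting a unimodular hermitian form — equivalently, a constraint on $\det$. For $t = n$ ($n$ even) the lattice $\Lambda$ would force $\Lambda^\sharp = \pi\Lambda$, i.e., $\bar\Lambda$ is a \emph{totally isotropic-yet-nondegenerate} quadratic space of dimension $n$, which exists over $\BF_p$ precisely when the relevant discriminant invariant is the split one; this is the case distinction in the statement.

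Then I would handle the constraint from the discriminant bookkeeping. The invariant $\disc(C) \in \BQ_p^\times/\mathrm{Nm}(E^\times) \cong \{\pm1\}$ is, up to the fixed sign $(-1)^{n(n-1)/2}$, read off from $\det$ of a Gram matrix. Chopping off a hyperbolic plane multiplies $\det$ by a norm (planes are "split"), so the existence of a vertex lattice of type $2m$ reduces to the existence of a unimodular hermitian form in dimension $n-2m$, which always exists, EXCEPT that for $t = n$ the "remaining" dimension is $0$ and one instead needs $\disc(C)$ itself to match the discriminant of the standard split $n$-dimensional alternating/quadratic $\BF_p$-space obtained from $n/2$ hyperbolic planes — forcing $C$ split. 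For $n$ odd the maximum is $n-1$ since $t \le n$ is even. I expect the main obstacle to be the precise translation between the $\BF_p$-quadratic-space discriminant of $\Lambda/\Lambda^\sharp$ and the $p$-adic hermitian discriminant of $C$, including getting the power of $(-1)$ and the effect of $\pi^{-1}$-scaling right; this is a finite but delicate computation with Jordan splittings of hermitian lattices over $\CO_E$, for which I would invoke the classification of such lattices (as in the cited reference \cite{J}) to reduce to the diagonal case and then just compute determinants of the small Gram matrices.
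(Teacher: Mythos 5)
Your constructive part — split off hyperbolic planes from a self-dual lattice and scale the Lagrangian of each by $\pi^{-1}$ — is essentially the construction the paper uses (the paper writes the form as $\diag(H,\dots,H)$ or $\diag(H,\dots,H,J)$ or $\diag(H,\dots,H,1)$, then rescales by $\pi^{-1}$ on the first basis vector of each $H$-block). Both arguments also lean on the classification of modular hermitian lattices from Jacobowitz \cite{J}. So in outline the approach is the paper's.

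However there is a genuine error in the structural part that, as written, breaks your non-existence argument for type $n$ in the non-split even case. You claim that the reduction of (a $\pi$-scaling of) the hermitian form to $\bar\Lambda = \Lambda/\Lambda^\sharp$ is a nondegenerate \emph{symmetric} $\BF_p$-bilinear form, and then propose to exclude type $n$ via the discriminant of that finite quadratic space. This is wrong: since $\bar\pi = -\pi$, for $x,y \in \Lambda$ one has $\overline{\pi(x,y)} = -\pi(y,x)$, and conjugation acts trivially on $\CO_E/\pi \cong \BF_p$, so the induced pairing is \emph{alternating} (equivalently, this is the nondegenerate symplectic form $\langle\ ,\ \rangle_V$ the paper constructs in Section~\ref{section.BTstrata} — that it is symplectic is exactly what makes the Deligne--Lusztig varieties in the main theorem those of the symplectic group). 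Alternating forms over $\BF_p$ carry no discriminant invariant — any two nondegenerate ones of the same even rank are isomorphic — so the finite-field side imposes no obstruction, and your proposed mechanism for ruling out $t=n$ when $C$ is non-split collapses. The actual obstruction is a $p$-adic statement about the non-existence of $\pi$-modular (equivalently, $\pi^{-1}$-modular) hermitian lattices when the discriminant is the non-norm class, which the paper handles simply by citing \cite{J}, Proposition~8.1; you should do the same (or reproduce that determinant computation directly), rather than trying to read the obstruction off $\bar\Lambda$.

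Two smaller points. The preliminary remark that ``all vertex lattices live between scalings of $L_0$'' is not used and is not quite true as stated; you can drop it. And your closing sentence for odd $n$ (``the maximum is $n-1$ since $t\le n$ is even'') needs the accompanying existence of a vertex lattice of each even type $\le n-1$, which does follow from the hyperbolic-plane construction since the Witt index is $(n-1)/2$ in that case — just make that explicit.
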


\begin{proof}
We denote by $H$ the anti-diagonal matrix in ${\rm M}_2(E)$, with $1$'s  on the anti-diagonal. Let $J$  be the diagonal matrix in ${\rm M}_2(E)$, with $u_1, u_2$  on the diagonal, where $u_1, u_2\in \BZ_p^\times$ are units such that $-u_1u_2\notin {\rm Nm}_{E/\BQ_p}(E^\times)$. Then $H$ defines the split hermitian space of dimension $2$ over $E$, and $J$ defines the non-split space.

If $n$ is even then $C$ is isomorphic to $E^n$ with hermitian form (wrt. the standard basis) given  by either the matrix $\diag(H,...,H)$, consisting of $n/2$ copies of $H$  or by the matrix 
$\diag(H,...,H, J)$ consisting of $n/2-1$ copies of $H$ and one copy of the matrix $J$. The first case occurs if and only if $C$ is split, the second case if and only if $C$ is non-split.
This shows that in the first case there is a vertex lattice of type $n$, namely the lattice spanned by the vectors $\pi^{-1}e_i$, for $i$  odd, and   by the vectors $e_j$ for $j$ even. In the second case,  there is at least a vertex lattice of type $n-2$ namely the lattice spanned by the vectors $\pi^{-1}e_i$ for $i\leq n-2$  odd,  and by $e_j$ for $j$  even or equal to $n-1$.  It follows from \cite{J}, Proposition 8.1 that there is no vertex lattice of type $n$ if $C$ is non-split. In both cases a vertex lattice of even type $t\leq n-2$ is obtained as the span of the $\pi^{-1}e_i$ for $i\leq t$ and odd and the $e_j$ for $j$ even or $j>t$.

If $n$ is odd then, after multiplying the form by a scalar in $\BZ_p^{\times}$ (and hence the discriminant is irrelevant), we may assume that $C$ is isomorphic to  $E^n$ with hermitian form (wrt. the standard basis) given  by  the matrix 
$\diag(H,...,H,1)$ consisting of $(n-1)/2$ copies of $H$ and one copy of the $1\times 1$ unit matrix $1$. It follows as above that there is a vertex lattice of any even type $t\leq n-1$.
 \end{proof}

The set $\CT$ of vertex lattices in $C$  forms a simplicial complex as follows.   We call distinct vertex lattices $\Lambda_1$ and $\Lambda_2$  {\it neighbours}, if $\Lambda_1\subset \Lambda_2$ or $\Lambda_2\subset \Lambda_1$. Then an $r$-simplex is formed by elements $\Lambda_0,\ldots,\Lambda_r$ such that any two members of this set are neighbours. It is clear that the action of $\SU(C)(\BQ_p) = \SU (C, ( \,,\, ))(\BQ_p)$ on $\CT$
preserves this structure of simplicial complex.

 We define another simplicial complex $\CL$ as follows. If $n$ is even and $C$ non-split,  or if $n$ is  odd, then $\CL=\CT$. Next let $n$ be even and $C$ split. Hence the maximal type of a vertex lattice is $n$. In this case we let $\CL$ be the set of vertex lattices that are not of type $n-2$. Then we call distinct elements $\Lambda_1$ and $\Lambda_2$ in $\CL$ neighbours if $\Lambda_1\subset \Lambda_2$ or $\Lambda_2\subset \Lambda_1$. Note that in this case, not both $\Lambda_1$ and $\Lambda_2$ are of type $n$. If $\Lambda_1$ and $\Lambda_2$ are both of type $n$, then we call them neighbours if their intersection is a vertex lattice of type $n-2$. 
It is clear that  the action of $\SU(C)(\BQ_p) = \SU (C, ( \,,\, ))(\BQ_p)$ on $\CL$
preserves this structure of simplicial complex.
\begin{proposition}\label{Gebaeude} A set of vertex lattices $\{ \Lambda_0, \ldots, \Lambda_r \}$ is an  $r$-simplex in $\CT$ if and only if, after renumbering the $\Lambda_i$, there is a chain of inclusions,
\begin{equation*}
 \Lambda_0 \supset \Lambda_1 \supset \ldots \supset \Lambda_r \, .
\end{equation*}
 There is a $\SU (C) (\BQ_p)$-equivariant isomorphism between $\CL$ and the Bruhat-Tits simplicial complex of 
$\SU (C)$ over $\BQ_p$.

The simplicial complexes $\CT$ and $\CL$ are connected. 

\end{proposition}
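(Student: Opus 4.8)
The plan is to establish the three assertions in turn, the first being essentially formal and the third reducing to the second. For the simplex-characterization: if $\{\Lambda_0,\dots,\Lambda_r\}$ is an $r$-simplex in $\CT$, then any two $\Lambda_i$ are neighbours, so for each pair one contains the other. I would argue that the relation ``$\Lambda\subseteq\Lambda'$'' on this finite set is a total order: it is clearly a partial order, and totality is exactly the neighbour condition (together with the fact, already recorded before Lemma \ref{typeeven}, that $\Lambda_1\subseteq\Lambda_2$ between vertex lattices forces a strict type inequality unless they are equal, so distinct members are strictly comparable). A finite totally ordered set can be renumbered as a descending chain $\Lambda_0\supset\Lambda_1\supset\cdots\supset\Lambda_r$. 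Conversely such a chain obviously consists of pairwise neighbours, hence is a simplex. This disposes of the first paragraph.

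For the identification of $\CL$ with the Bruhat-Tits building of $\SU(C)$ over $\BQ_p$: the Bruhat-Tits building of a special unitary group over a $p$-adic field is, by Bruhat-Tits theory (in the form worked out for classical groups, e.g.\ following \cite{J} or the tables in \cite{PR3}), the simplicial complex of selfdual periodic lattice chains, equivalently — after translating polarized lattice chains into single lattices via the duality $\Lambda\mapsto\Lambda^\sharp$ — the complex whose vertices are the $\CO_E$-lattices $\Lambda$ with $\pi\Lambda\subseteq\Lambda^\sharp\subseteq\Lambda$, i.e.\ precisely the vertex lattices, with simplices the chains of such lattices; and in the split-even case one must collapse the ambiguity of a self-dual ($t=0$) together with a $t=n$ lattice into a single vertex of the building (or, in the normalization chosen here, drop the $t=n-2$ lattices and redefine adjacency of two $t=n$ lattices by requiring their intersection to be of type $n-2$). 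I would invoke this structure theory directly: the vertices of the building of $\SU(C)$ are in $\SU(C)(\BQ_p)$-equivariant bijection with the lattice-homothety classes appearing in selfdual chains, these are enumerated by the vertex lattices (with the indicated modification when $C$ is split and $n$ even, corresponding to the two ends of the local Dynkin diagram of type $C$-$BC$), and the adjacency relation matches the one we defined on $\CL$; the equivariance is automatic since both structures are defined purely from the hermitian form. This step is the main obstacle: it is not a computation but a matter of carefully matching our combinatorial definition of $\CL$ against the normalization of the building in the literature, in particular checking that ``neighbour'' in $\CL$ is exactly ``face relation'' in the building, and that the special behaviour at type $n$ (split even case) reproduces the folding at the relevant end of the local Dynkin diagram.

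Finally, connectedness of $\CT$ (and of $\CL$): since a building is connected — indeed contractible — connectedness of $\CL$ follows from the previous step. For $\CT$, when $\CL=\CT$ there is nothing more to prove; in the remaining case ($n$ even, $C$ split) I would deduce connectedness of $\CT$ from that of $\CL$ by observing that every vertex lattice of type $n-2$ is contained in some vertex lattice of type $n$ (using Lemma \ref{type} and the explicit description of vertex lattices in the split even case, or simply that a type-$(n-2)$ lattice can be enlarged at the remaining orthogonal hyperbolic plane to a type-$n$ lattice), hence is a neighbour of a vertex in $\CL$; thus every vertex of $\CT$ is connected by an edge to the connected subcomplex $\CL$, and $\CT$ is connected. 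One could alternatively give a direct lattice-theoretic proof of connectedness of $\CT$ by an induction on the distance $[\Lambda:\Lambda'\cap\Lambda]+[\Lambda':\Lambda'\cap\Lambda]$ between two vertex lattices, at each step replacing one of them by a vertex lattice strictly closer to the other, but going through the building is cleaner given that we need the building identification anyway.
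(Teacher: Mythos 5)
Your proposal follows essentially the same route as the paper: the first assertion is treated as nearly immediate, the second is established by matching $\CL$ against the Bruhat--Tits building via the correspondence between vertex lattices and parahoric subgroups (the paper phrases the split-even subtlety by observing that the stabilizer of a type-$(n-2)$ vertex lattice is the \emph{intersection} of the stabilizers of the two type-$n$ lattices containing it, hence corresponds to an edge and not a vertex, citing \cite{PR1}, \S 4 and \cite{PR3}, \S 1.2), and the third follows from connectedness of the building. One correction: the passage from $\CT$ to $\CL$ in the split even case has nothing to do with identifying a type-$0$ (selfdual) lattice with a type-$n$ lattice --- those remain distinct vertices of the building; the modification is precisely about the type-$(n-2)$ lattices being edges rather than vertices, which is what your parenthetical correctly states. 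Your added remark, that every type-$(n-2)$ lattice is a face of a type-$n$ lattice and hence that connectedness of $\CT$ follows from that of $\CL$, is a useful gloss that the paper leaves implicit.
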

\begin{proof} The first assertion is trivial. The relation between $\CL$ and the Bruhat-Tits simplicial complex is  discussed in \cite{PR1}, \S 4, in the cases where  $C$ is split, comp. also \cite{PR3}, \S 1.2. The stabilizers of vertex lattices in $\SU(C)(\BQ_p)$ are always  parahoric subgroups. Furthermore, these parahoric subgroups are always maximal, and hence correspond to vertices in the Bruhat-Tits simplicial complex, except when $n$ is even, and the type of the
vertex lattice is $n-2$. In the latter case the corresponding parahoric subgroup is the intersection of the stabilizers of the two vertex lattices of type $n$ containing the vertex lattice of type $n-2$. Hence a vertex lattice of type $n-2$  corresponds to an edge in the Bruhat-Tits complex, with vertices corresponding to the two vertex lattices of type $n$ containing it. This discussion extends to the case when $C$ is non-split, where this last  phenomenon does not occur (use the normal form of the hermitian form explained in the proof of Lemma \ref{type} above). 

It is well-known that the Bruhat-Tits complex is connected; the connectedness of $\CT$ follows. 
\end{proof}

\section{The pointwise Bruhat-Tits stratification of $\bar \CN^0_{\rm red}$}\label{section.BruhatTitsstratification}

In this section we will construct a decomposition of $\CV (\BF)$  into a disjoint sum of subsets
parametrized by vertex lattices in $C$. The subset associated to the vertex lattice $\Lambda$ is called
the \emph{Bruhat-Tits stratum} $\CV_\Lambda^\circ(\BF)$ associated to $\Lambda$, in analogy with \cite{VW}.

For a $\CO_{\breve{E}}$-lattice $M$ in $C \otimes_E \breve{E}$, and a non-negative integer $j$, let
\begin{equation*}
 T_j (M) = M + \tau (M) + \ldots + \tau^j (M) \, .
\end{equation*}
\begin{proposition}\label{pointsvertexlattice}
 Let $M\in \CV(\BF)$ be an $\CO_{\breve{E}}$-lattice in $C \otimes_E\breve{E}$ which corresponds to a point 
of $\bar \CN^\circ (\BF)$ under the bijection of Proposition \ref{moduldescr}. There exists an (unique) integer 
$d \leq \frac{n}{2}$ such that there is a chain of inclusions of $\CO_{\breve{E}}$-lattices
\begin{equation*}
 M \subset^1 T_1(M) \subset^1 \ldots \subset^1 T_d(M) = T_{d+1} (M) = \ldots
\end{equation*}
The intersection $\Lambda (M) = T_d (M) \cap C = T_d (M)^\tau$ is a vertex lattice of type $2d$. It is the 
minimal lattice $\Lambda$ in $C$ such that $\Lambda \otimes_{\CO_E} \CO_{\breve{E}}$ contains $M$. Dually,
$\Lambda^{\sharp}$ is the maximal lattice in $C$ such that $\Lambda^{\sharp} \otimes_{\CO_E} \CO_{\breve E}$ is contained
in $M$.
\end{proposition}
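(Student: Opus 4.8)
The plan is to exploit the two defining conditions on $M\in\CV(\BF)$, namely self-duality $M^\sharp=M$ and the ``near-stability'' conditions $\Pi\tau(M)\subset M\subset\Pi^{-1}\tau(M)$ and $M\subset^{\leq 1}(M+\tau(M))$, to control how the chain $T_j(M)$ grows. First I would observe that $M\subset^{\leq 1} T_1(M)$ is exactly one of the hypotheses, and then argue inductively that each successive quotient $T_{j}(M)/T_{j-1}(M)$ has length at most $1$: applying $\tau$ to the inclusion $T_{j-1}(M)\subset^{\le 1}T_{j-1}(M)+\tau(T_{j-1}(M))=T_j(M)$ and using $\sigma$-semilinearity, one sees the length cannot increase as $j$ grows, so once $T_d(M)=T_{d+1}(M)$ the chain stabilizes. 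That the chain does stabilize (i.e.\ $d<\infty$) follows because all the $T_j(M)$ sit inside a fixed lattice: iterating $M\subset\Pi^{-1}\tau(M)$ gives $\tau^j(M)\subset \Pi^{-j}\tau^{j}(\cdots)$, more carefully $M+\tau(M)+\cdots\subset \Pi^{-j}M$ up to bounded correction, and self-duality plus the opposite inclusion $\Pi\tau(M)\subset M$ bound things from below; so the union $T_\infty(M):=\bigcup_j T_j(M)$ is a lattice, and a chain of length-$\le 1$ steps inside a lattice must terminate.

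Next I would identify $T_\infty(M)=T_d(M)$ as $\tau$-stable by construction (it is visibly stable under $\tau$ since adding $\tau(T_d(M))$ changes nothing), hence of the form $\Lambda(M)\otimes_{\CO_E}\CO_{\breve E}$ for $\Lambda(M)=T_d(M)^\tau=T_d(M)\cap C$, using the descent along $\mathrm{id}_C\otimes\sigma\leftrightarrow\tau$ from Section~\ref{section.Themodulispace}. Minimality is then immediate: any $\CO_E$-lattice $\Lambda$ with $\Lambda\otimes\CO_{\breve E}\supseteq M$ is $\tau$-stable, hence contains $\tau^j(M)$ for all $j$, hence contains $T_d(M)$, so $\Lambda\otimes\CO_{\breve E}\supseteq T_d(M)=\Lambda(M)\otimes\CO_{\breve E}$ and therefore $\Lambda\supseteq\Lambda(M)$. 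The dual statement follows by applying $\sharp$: since $M^\sharp=M$ and duality is order-reversing and commutes with base change, $\Lambda(M)^\sharp\otimes\CO_{\breve E}=(\Lambda(M)\otimes\CO_{\breve E})^\sharp=T_d(M)^\sharp\subseteq M^\sharp=M$, and maximality of $\Lambda(M)^\sharp$ among lattices whose base change lies in $M$ is dual to the minimality just proved.

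It remains to check $\Pi\Lambda(M)\subset\Lambda(M)^\sharp\subset\Lambda(M)$, i.e.\ that $\Lambda(M)$ is genuinely a vertex lattice, and that its type is exactly $2d$. The inclusion $\Lambda(M)^\sharp\subset\Lambda(M)$ is $T_d(M)^\sharp\subset T_d(M)$, which holds because $M\subset T_d(M)$ gives $T_d(M)^\sharp\subset M^\sharp=M\subset T_d(M)$. For $\Pi\Lambda(M)\subset\Lambda(M)^\sharp$: from $\Pi\tau(M)\subset M$ one gets $\Pi\tau^{j+1}(M)\subset\tau^{j}(M)$ hence $\Pi T_d(M)=\Pi(M+\cdots+\tau^d(M))\subset T_{d-1}(M)+\Pi M$; combining with $\Pi M\subset M$ (as $\Pi$ preserves $M$, which is one of the conditions rewritten) and dualizing, $\Pi T_d(M)\subset M\subset T_d(M)^\sharp$ — here one uses the self-duality to convert the statement about $\Pi T_d(M)$ lying in $M$ into lying in $T_d(M)^\sharp$, since $\Pi$ is (up to sign) self-adjoint for $(\,,\,)$. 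Finally, the type: the chain $M\subset^1 T_1(M)\subset^1\cdots\subset^1 T_d(M)$ has total length $d$, so $[\Lambda(M)\otimes\CO_{\breve E}:M]=d$; pairing with $[M:\Lambda(M)^\sharp\otimes\CO_{\breve E}]=d$ (obtained by dualizing, using $M^\sharp=M$ and the length-additivity of the index from the proof of Lemma~\ref{typeeven}) gives $[\Lambda(M):\Lambda(M)^\sharp]=2d$ after dividing by the degree-$2$ extension, so $t(\Lambda(M))=2d$, and $d\le n/2$ follows since $t\le n$ always (or from $pM\subset VM\subset^n M$).

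The main obstacle I anticipate is the careful bookkeeping in the inductive step that each $T_j(M)/T_{j-1}(M)$ has length $\le 1$: one must phrase the hypothesis $M\subset^{\le 1}(M+\tau(M))$ correctly, apply $\tau$ (a $\sigma$-semilinear bijection, so it preserves lengths of quotients of lattices), and then see that $T_j(M)/T_{j-1}(M)$ is a quotient of $\tau^{j-1}\big((M+\tau(M))/M\big)$, hence of length $\le 1$; this is where a sloppy argument could wrongly allow the length to be $2$ or more, so it deserves to be written out explicitly rather than waved through.
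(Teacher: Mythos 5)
Your overall strategy (inductive control of the step lengths via $\sigma$-semilinearity of $\tau$, stabilization, descent to $C$, minimality, and duality) matches the paper's, and the pieces you flag as needing care are the right ones for the first part. But there is a genuine gap in your verification that $\Lambda(M)$ is actually a vertex lattice, specifically the inclusion $\Pi\Lambda(M)\subset\Lambda(M)^\sharp$, i.e.\ $\Pi T_d\subset T_d^\sharp$. From $\Pi\tau^{j+1}(M)\subset\tau^j(M)$ together with $\Pi M\subset M$ you correctly obtain $\Pi T_d\subset T_{d-1}$, but the two further claims you make are both wrong for $d\ge 2$: $\Pi T_d\subset M$ is false, since $T_{d-1}$ strictly contains $M$ (the chain has $d-1\ge 1$ genuine steps above $M$); and $M\subset T_d^\sharp$ is backwards, since $M=M^\sharp\supset T_d^\sharp$ because $M\subset T_d$. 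Note also that $\Pi T_d\subset T_{d-1}$ really is strictly weaker than what you need, since $T_d^\sharp\subsetneq M\subsetneq T_{d-1}$ once $d\ge 2$. A direct attack founders because $T_d^\sharp=\bigcap_{0\le\ell\le d}\tau^\ell(M)$, and $\Pi\tau^j(M)\subset\tau^k(M)$ is only guaranteed when $|j-k|\le 1$; for larger gaps it simply is not one of your hypotheses.

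The paper closes this gap by a more delicate argument that you do not reproduce. It first shows $T_d\subset\Pi^{-1}T_{d-1}$ (essentially your computation), and then uses $\tau(T_d)=T_d$ to upgrade this to $T_d\subset\Pi^{-1}\bigcap_{\ell\in\BZ}\tau^\ell(T_{d-1})$. The crucial identity is then
\begin{equation*}
\bigcap_{\ell\in\BZ}\tau^\ell(T_{d-1})=\bigcap_{\ell\in\BZ}\tau^\ell(M),
\end{equation*}
which follows by downward induction from $T_j\cap\tau(T_j)=\tau(T_{j-1})$ for $1\le j\le d-1$ — and this in turn uses precisely the fact that each step in the chain has index exactly $1$, since both lattices sit inside $\tau(T_j)$ with the same index. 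Finally one invokes self-duality $M=M^\sharp$ and compatibility of $\tau$ with $\sharp$ (the form is $\tau$-invariant) to write $\bigcap_\ell\tau^\ell(M)=\bigcap_\ell\tau^\ell(M)^\sharp\subseteq\bigcap_{0\le\ell\le d}\tau^\ell(M)^\sharp=T_d^\sharp$, giving $\Pi T_d\subset T_d^\sharp$ as required. So self-duality of $M$ must be used in the form-theoretic step, not merely as an index-counting device; your sketch never actually deploys it in that role, and without it the inclusion $\Pi\Lambda(M)\subset\Lambda(M)^\sharp$ does not follow.
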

\begin{proof}
 We write $T_j$ for $T_j(M)$. Obviously $T_{j+1} = T_j + \tau(T_j)$. Let $d$ be minimal such that 
$T_d = \tau (T_d)$; in particular, $T_j \neq \tau (T_j)$ for $0 \leq j < d$. Such an integer exists, cf.  \cite{RZ}, Proposition~2.17. 
  We need to show that $T_0 \subset^1 T_1 \subset^1 \ldots \subset^1 T_d = T_{d+1} = \ldots$ and
that $T_d^\tau$ is a vertex lattice of type $2d$ in $C$ (hence also $d \leq n/2)$. If $d = 0$, then all assertions are
obvious, hence we may assume $d \geq 1$.

\emph{Case} $d = 1$. In this case, by Proposition \ref{moduldescr} we have $T_0 \subset^1 T_1$. Furthermore,
\begin{equation*}
 T_1^\sharp \subset^1 T_0^\sharp = T_0 \subset^1 T_1 \, .
\end{equation*}
 We need to show that $\Pi \, T_1 \subset T_1^\sharp$. But
\begin{equation*}
 T_1^\sharp = (T_0 + \tau (T_0))^\sharp = T_0^\sharp \cap \tau (T_0)^\sharp = T_0 \cap \tau (T_0) \, .
\end{equation*}
Since $\Pi \, T_0 \subset \tau (T_0)$ we deduce $\Pi \, T_0 \subset T_0 \cap \tau (T_0)$; since 
$\Pi \, \tau(T_0) \subset T_0$ we deduce $\Pi  \tau (T_0)\subset T_0\cap  \tau(T_0)$. Hence
\begin{equation*}
 \Pi \, T_1 = \Pi \, T_0 + \Pi \, \tau (T_0) \subset T_0 \cap \tau (T_0) = T_1^\sharp \, ,
\end{equation*}
as claimed.

\emph{Case} $d \geq 2$. We use
\begin{equation*}
 M + \tau (M) + \tau^2(M) \subset \Pi^{-1} \, \tau(M) \, .
\end{equation*}
Hence, since $d \geq 2$,
\begin{equation*}
 \begin{aligned}
  T_d 
& = \big(M+ \tau (M) + \tau^2 (M)\big) + \ldots + \tau^{d-2} \big(M + \tau (M) + \tau^2 (M)\big)\\
& \subseteq \Pi^{-1} \tau (M) + \Pi^{-1} \tau^2 (M) + \ldots + \Pi^{-1} \tau^{d-1} (M)\\
& = \Pi^{-1} T_{d-1} \, .
 \end{aligned}
\end{equation*}
Hence, since $\tau (T_d) = T_d$  ,
\begin{equation*}
 T_d \subseteq \Pi^{-1} \cdot \bigcap_{\ell \in \BZ} \tau^\ell (T_{d-1}) \, .
\end{equation*}
Furthermore, for $j = 0, \ldots, d - 1$, we have $T_j \subset^{1} T_{j+1}$. Hence 
$T_j \cap \tau (T_j) = \tau (T_{j-1})$ for $j = 1, \ldots, d - 1$, because both lattices are contained
in $\tau (T_j)$ with index $1$. By induction we get
\begin{equation*}
 \bigcap_{\ell \in \BZ} \tau^\ell (T_{d-1}) = \bigcap_{\ell \in \BZ} \tau^\ell (M) .
\end{equation*}
Hence
\begin{equation*}
 \begin{aligned}
  T_d 
& \subset \Pi^{-1} \bigcap \tau^\ell (M)\\
& = \Pi^{-1} \bigcap \tau^\ell (M^\sharp)\\
& = \Pi^{-1} \bigcap \tau^\ell (M)^\sharp\\
& \subseteq \Pi^{-1} \bigcap_{0 \leq \ell \leq d} \tau^\ell (M)^\sharp\\
& = \Pi^{-1} \cdot T_d^\sharp \, .
 \end{aligned}
\end{equation*}
Hence we obtain the chain of inclusions
\begin{equation*}
 \Pi \,  T_d \subset T_d^\sharp \subset^d M^\sharp = M \subset^d T_d \, .
\end{equation*}
It follows that $\Lambda = T_d^\tau$ is a vertex lattice in $C$ of type $2 d$. In particular, as mentioned
above, $2d \leq n$.
\end{proof}

\begin{remark}
For even $n$, indeed both possibilities, for $C$ to be split and to be non-split, 
 can occur depending on the choice of $(\BX, \iota, \lambda_\BX)$ used to define the moduli problem for $\CN$. For $n=2$ this is discussed in \cite{KR3}, section 5. Let $\CE$ be the $p$-divisible group of a supersingular elliptic curve over $\BF$, with its action by $\CO_E$ (obtained by embedding $E$ into the quaternion algebra $\End^0(\CE)$), and with its natural principal polarization, which allows us to identify $\CE$ with $\CE^\vee$. Let $\BX_2^{+}$ be the $p$-divisible group $\CE\times\CE$ with the diagonal action by $\CO_E$, and the polarization given by the anti-diagonal matrix $H$ with $1$'s on the anti-diagonal, cf. proof of Lemma \ref{type}. The associated hermitian space $C$ is split, cf. \cite{KR3}. Similarly, let  $\BX_2^{-}$ be defined in the same way as $\BX_2^{+}$, except that the polarization is defined by the diagonal matrix with entries $u_1, u_2\in \BZ_p^\times$ with $-u_1u_2\notin {\rm Nm}_{E/\BQ_p}(E^\times)$. The associated hermitian space $C$ is non-split.
 Then for any even $n$, we can construct a triple  $(\BX, \iota, \lambda_\BX)$ with  $C$ split by choosing $\BX$ to be the product of $n/2$ copies of $\BX_2^{+}$ and for the action $\iota$ and the polarization $\lambda_\BX$ of $\BX$ the corresponding product action resp. product polarization. Similarly 
for any even $n$, we can construct a triple  $(\BX, \iota, \lambda_\BX)$ with $C$ non-split  by choosing $\BX$ to be the product of $n/2-1$ copies of $\BX_2^{+}$ and one copy of $\BX_2^{-}$ and for the action $\iota$ and the polarization $\lambda_\BX$ of $\BX$ the corresponding product action resp. product polarization. 
Conversely, for $n$ even,  the choice of a discriminant determines $(\BX, \iota, \lambda_\BX)$ up to isogeny.  

For $n$ odd,  $(\BX, \iota, \lambda_\BX)$ is unique up to isogeny since we may multiply $\lambda_\BX$ by elements in $\BZ_p^{\times}$ which makes the discriminant of $C$ in this case irrelevant. (Compare also the proof of \cite{VW}, Lemma 6.1.)
\end{remark}
Let $\Lambda$ be a vertex lattice in $C$, and set
\begin{equation*}
 \CV_\Lambda (\BF) = \{ M \in \CV (\BF) \mid M \subset \Lambda \otimes_{\CO_E} \CO_{\breve{E}} \} \, .
\end{equation*}
Then by the previous proposition,
\begin{equation}
 \CV (\BF) = \bigcup\nolimits_\Lambda \CV_\Lambda (\BF) \, .
\end{equation}
More precisely, let
\begin{equation*}
 \CV_\Lambda^\circ (\BF)= \{ M \in \CV (\BF) \mid \Lambda (M) = \Lambda \} \, .
\end{equation*}
Then $ \CV_\Lambda^\circ (\BF) \subset \CV_\Lambda (\BF)$, and 
\begin{equation}
\CV (\BF) = \biguplus\nolimits_\Lambda \CV_\Lambda^\circ (\BF) ,
\end{equation}
(disjoint union).

\begin{proposition}\label{pointstrat}
 Let $\Lambda_1$ and $\Lambda_2$ be vertex lattices in $C$.
 
 \smallskip
 
\noindent (i) $\CV (\Lambda_1) \subseteq \CV (\Lambda_2)$ if and only if $\Lambda_1 \subseteq \Lambda_2$, and equality
holds if and only if $\Lambda_1 = \Lambda_2$.
Hence
\begin{equation*}
 \CV_\Lambda (\BF) = \biguplus\nolimits_{\Lambda' \subset \Lambda} \CV_{\Lambda'}^\circ (\BF)\, ,
\end{equation*}
and all summands are non-empty.

\smallskip

\noindent (ii) If $\Lambda_1 \cap \Lambda_2$ is a vertex lattice, then 
\begin{equation*}
 \CV_{\Lambda_1 \cap \Lambda_2}(\BF) = \CV_{\Lambda_1}(\BF) \cap \CV_{\Lambda_2}(\BF) \, .
\end{equation*}
Otherwise $\CV_{\Lambda_1}(\BF) \cap \CV_{\Lambda_2}(\BF) = \emptyset$.
\end{proposition}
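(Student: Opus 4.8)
The plan is to reduce the whole statement to Proposition~\ref{pointsvertexlattice} together with the elementary properties of the duality $\Lambda\mapsto\Lambda^\sharp$ recorded before Lemma~\ref{typeeven}; the only non-formal ingredient is that every stratum $\CV^\circ_\Lambda(\BF)$ is non-empty, which I would isolate and prove separately.

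For (i), the implication $\Lambda_1\subseteq\Lambda_2\Rightarrow\CV_{\Lambda_1}(\BF)\subseteq\CV_{\Lambda_2}(\BF)$ is immediate: tensoring $\Lambda_1\subseteq\Lambda_2$ with $\CO_{\breve E}$ shows that any $M\subseteq\Lambda_1\otimes_{\CO_E}\CO_{\breve E}$ also lies in $\Lambda_2\otimes_{\CO_E}\CO_{\breve E}$. For the displayed decomposition I would combine the partition $\CV(\BF)=\biguplus_{\Lambda'}\CV^\circ_{\Lambda'}(\BF)$ with the remark that, for $M\in\CV^\circ_{\Lambda'}(\BF)$, the minimality of $\Lambda(M)=\Lambda'$ proved in Proposition~\ref{pointsvertexlattice} yields the equivalence $M\subseteq\Lambda\otimes_{\CO_E}\CO_{\breve E}\iff\Lambda'\subseteq\Lambda$; hence $\CV_\Lambda(\BF)=\biguplus_{\Lambda'\subseteq\Lambda}\CV^\circ_{\Lambda'}(\BF)$, and non-emptiness of these summands is exactly the assertion below. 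Granting it, the remaining claims follow at once: if $\CV_{\Lambda_1}(\BF)\subseteq\CV_{\Lambda_2}(\BF)$, choose $M\in\CV^\circ_{\Lambda_1}(\BF)$; then $M\subseteq\Lambda_2\otimes_{\CO_E}\CO_{\breve E}$, so $\Lambda_1=\Lambda(M)\subseteq\Lambda_2$ by minimality, and applying this symmetrically gives $\CV_{\Lambda_1}(\BF)=\CV_{\Lambda_2}(\BF)\iff\Lambda_1=\Lambda_2$.

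For (ii), since $\CO_{\breve E}$ is flat over $\CO_E$ one has $(\Lambda_1\otimes_{\CO_E}\CO_{\breve E})\cap(\Lambda_2\otimes_{\CO_E}\CO_{\breve E})=(\Lambda_1\cap\Lambda_2)\otimes_{\CO_E}\CO_{\breve E}$, so $\CV_{\Lambda_1}(\BF)\cap\CV_{\Lambda_2}(\BF)=\{M\in\CV(\BF):M\subseteq(\Lambda_1\cap\Lambda_2)\otimes_{\CO_E}\CO_{\breve E}\}$. When $\Lambda_1\cap\Lambda_2$ is a vertex lattice this set is by definition $\CV_{\Lambda_1\cap\Lambda_2}(\BF)$, which proves the first assertion (consistently with (i), since $\Lambda_1\cap\Lambda_2\subseteq\Lambda_i$). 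When $\Lambda_1\cap\Lambda_2$ is not a vertex lattice, suppose for contradiction that some $M$ lies in the intersection. Then $\Lambda(M)\subseteq\Lambda_1\cap\Lambda_2$ by minimality, with $\Lambda(M)$ a vertex lattice by Proposition~\ref{pointsvertexlattice}; hence $\Lambda_2^\sharp\subseteq\Lambda(M)^\sharp\subseteq\Lambda(M)\subseteq\Lambda_1$, and by the criterion recalled before Lemma~\ref{typeeven} this forces $\Lambda_1\cap\Lambda_2$ to be a vertex lattice --- a contradiction. So the intersection is empty.

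It remains to prove $\CV^\circ_\Lambda(\BF)\neq\emptyset$ for every vertex lattice $\Lambda$, say of type $2d$; this is the real content and the step I expect to be the main obstacle. I would fix $\Lambda$, set $\bar\Lambda=\Lambda\otimes_{\CO_E}\CO_{\breve E}$ and $\bar\Lambda^\sharp=\Lambda^\sharp\otimes_{\CO_E}\CO_{\breve E}$ --- both $\tau$-stable, being extended from $\CO_E$-lattices in $C$, with $\Pi\bar\Lambda\subseteq\bar\Lambda^\sharp\subseteq\bar\Lambda$ --- and parametrize the lattices $M$ with $\bar\Lambda^\sharp\subseteq M\subseteq\bar\Lambda$ by $\BF$-subspaces $\bar M$ of the $2d$-dimensional $\BF$-vector space $\overline V=(\Lambda/\Lambda^\sharp)\otimes_{\BF_p}\BF$, equipped with the non-degenerate alternating form induced by the hermitian form and with $\tau$ acting as $\mathrm{id}\otimes\sigma$. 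One checks routinely that, among the conditions defining $\CV(\BF)$ in Proposition~\ref{moduldescr}, the identity $M^\sharp=M$ becomes ``$\bar M$ is Lagrangian''; the inclusions $\Pi\tau(M)\subseteq M\subseteq\Pi^{-1}\tau(M)$ are automatic (both reduce to $\Pi\bar\Lambda\subseteq\bar\Lambda^\sharp$ together with $\bar\Lambda^\sharp\subseteq M$); $M\subseteq^{\leq1}M+\tau(M)$ becomes $\dim(\bar M+\tau\bar M)\leq d+1$; and, invoking Proposition~\ref{pointsvertexlattice} for such an $M\in\CV(\BF)$, the equality $\Lambda(M)=\Lambda$ becomes $\bar M+\tau\bar M+\dots+\tau^{d}\bar M=\overline V$. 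Thus it suffices to exhibit a Lagrangian $\bar M\subseteq\overline V$ with $\dim\bigl(\bar M+\tau\bar M+\dots+\tau^{i}\bar M\bigr)=d+i$ for $0\leq i\leq d$; this is precisely an $\BF$-point of the Deligne--Lusztig variety for $\Sp_{2d}$ over $\BF_p$ and a standard Coxeter element, and such a point exists (Deligne--Lusztig varieties of Coxeter type are non-empty); alternatively one writes $\bar M$ down explicitly in a symplectic basis of $\Lambda/\Lambda^\sharp$, noting that over $\BF_p$ there is a unique alternating space in each even dimension and $\tau$ is the standard Frobenius, so no case distinction on the invariant of $C$ is needed. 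This translation between $\CV_\Lambda(\BF)$ and the Coxeter Deligne--Lusztig variety for $\Sp_{2d}$ will in any case be carried out in full in the next section, and the present proposition can also be recovered a posteriori from that identification.
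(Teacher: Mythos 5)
Your proof of parts (i) and (ii) matches the paper's argument essentially verbatim: the ``only if'' directions and the equality criterion reduce to non-emptiness of $\CV^\circ_\Lambda(\BF)$, the flatness computation gives the intersection formula, and the non-vertex case is settled by passing to $\Lambda(M)$ and invoking the criterion $\Lambda_1\cap\Lambda_2$ is a vertex lattice $\iff \Lambda_2^\sharp\subseteq\Lambda_1$, all of which is what the paper does (in fewer words). The genuine divergence is in how you handle the one non-formal ingredient, the non-emptiness of each $\CV^\circ_\Lambda(\BF)$. The paper simply defers it, writing ``follows from Propositions~\ref{decomp} and~\ref{iso} below'': i.e., it waits until Section~\ref{section.BTstrata} has established the scheme-theoretic isomorphism $\CN_\Lambda\cong S_V$ and the stratification $S_V=\biguplus_i S_i$, at which point non-emptiness of the open stratum $S_m=X_B(w)$ gives the result a posteriori. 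You instead carry out the pointwise translation $M\mapsto M/\Lambda^\sharp_{\CO_{\breve E}}$ directly at the level of $\BF$-points, verify the inclusions $\Pi\tau M\subseteq M\subseteq\Pi^{-1}\tau M$ are automatic over $\Lambda$, and reduce to the well-known non-emptiness of the Coxeter Deligne--Lusztig variety for $\Sp_{2d}$ — that is, you front-load the $\BF$-point content of Lemma~\ref{kwert}. Your route is more self-contained and avoids a forward reference, at the cost of anticipating work that the paper will do anyway; the paper's route is terser but leaves a logical IOU that is only discharged two sections later. Both are correct, and your verification of the automatic inclusions and of the translation of the conditions $M^\sharp=M$ and $\Lambda(M)=\Lambda$ into ``Lagrangian'' and ``$\bar M+\tau\bar M+\cdots+\tau^d\bar M=\overline V$'' is accurate.
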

\begin{proof}
 (i) The first assertion is obvious. 
The second claim  follows 
 from Propositions \ref{decomp} and \ref{iso} below. 

\noindent (ii) If $\Lambda = \Lambda_1 \cap \Lambda_2$ is a vertex lattice, it is obvious that
\begin{equation*}
 \CV_\Lambda (\BF) = \CV_{\Lambda_1}(\BF) \cap \CV_{\Lambda_2}(\BF) = \{ M \in \CV (\BF) \mid
M \subset (\Lambda_1 \otimes_{\CO_E} \CO_{\breve{E}}) \cap (\Lambda_2 \otimes_{\CO_E} \CO_{\breve{E}}) \} \, . 
\end{equation*}
 If $\CV_{\Lambda_1}(\BF) \cap \CV_{\Lambda_2}(\BF) \neq \emptyset$, let $M \in \CV_{\Lambda_1}(\BF) \cap \CV_{\Lambda_2}(\BF)$
lie in $\CV _\Lambda^\circ(\BF)$. Then $\Lambda \subset  \Lambda_1 \cap \Lambda_2$ by the maximality of 
$\Lambda = \Lambda (M)$. But then $\Lambda_1^\sharp +\Lambda_2^\sharp\subset \Lambda^\sharp \subset \Lambda \subset \Lambda_1\cap \Lambda_2$,
hence $\Lambda_1 \cap \Lambda_2$ is a vertex lattice.
\end{proof}

\section{The Deligne-Lusztig variety for the symplectic group}\label{section.DLvariety}
In this section, we analyze some (generalized) Deligne-Lusztig varieties. Their relevance to the general goal of the paper will become clear in the next section.

In the present section we adopt the point of view of the original paper \cite{DL} of Deligne and Lusztig; in particular, we describe the algebraic varieties that occur here by their points over $\bar\BF_p$.

Let $G$ be a split reductive group over a finite field $\BF_q$. (Later, $G$ will be a symplectic group over
$\BF_p$.) Let $T$ be a split maximal torus, $B$ a Borel subgroup containing $T$ and let 
$\Delta^* = \{ \alpha_1, \ldots , \alpha_n \}$ be the set of simple roots corresponding to $(T, B)$. 
Let $s_i = s_{\alpha_i}$ be the corresponding simple reflections in the Weyl group $W$. For 
$I \subset \Delta^*$, let $W_I$ be the subgroup of $W$ generated by $\{ s_i \mid i \in I \}$, and let
$P_I = BW_IB$ denote the corresponding standard parabolic subgroup. The quotient $G/P_I$ parametrizes the 
parabolic subgroups of type $I$. We denote by
\begin{equation*}
 {\rm inv}: G/P_I \times G/P_I \lra W_I \backslash W/W_I
\end{equation*}
 the \emph{relative position morphism}. For $w \in W_I \backslash W/W_I$, let
\begin{equation*}
 \CO_{P_I}(w) = \{ (P_1, P_2) \in G/P_I \times G/P_I \mid {\rm inv} (P_1, P_2) = w \} \, .
\end{equation*}
The \emph{generalized Deligne-Lusztig variety} $X_{P_I}$ corresponding to $w \in W_I \backslash W/W_I$ is the intersection
of $\CO_{P_I} (w)$ with the graph of Frobenius in $(G/P_I) \times (G/P_I)$, i.e., is the locally closed subscheme of $G/P_I$ consisting of all parabolic subgroups of type $I$ which are in relative position $w$ wrt its translate under $\Phi$.
 Here, and in the sequel, $\Phi$ denotes the Frobenius endomorphism. For $I = \emptyset$, we have $P_I = B$ and $X_B(w)$ is the usual Deligne-Lusztig variety attached to $w \in W$.

Since $\CO_{P_I} (w)$ is a homogeneous space under $G$, it is a smooth variety. The same follows for $X_{P_I}(w)$, which is smoothly equivalent to $\CO_{P_I} (w)$ by the {\it G\"ortz  local model diagram} in the context of Deligne-Lusztig varieties, comp. \cite{GY}, 5.2. Hence  $X_{P_I}(w)$ is equi-dimensional, with
\begin{equation}\label{dimDL}
 {\rm dim}\, X_{P_I} (w) = \ell_I(w) - \ell(w_I) \, ,
\end{equation}
where $\ell_I(w)$ denotes the maximal length of an element in the double coset $W_I w W_I$, and where $w_I$ 
denotes the longest element in $W_I$. In particular, for $I = \emptyset$, we have ${\rm dim}\, X_B (w) = \ell(w)$.

Now let $I = \emptyset$. For $w \in W$, let $\overline{\CO_B (w)}$ be the corresponding Schubert variety, i.e.
the reduced closure of $\CO_B (w)$ in $(G/B) \times (G/B)$. By the definition of the Bruhat order, there
is a stratification
\begin{equation*}
 \overline{\CO_B (w)} = \biguplus_{w' \leq w} \CO_B (w') \, .
\end{equation*}
We recall the Demazure resolution of $\overline{\CO_B (w)}$. Fix a reduced decomposition 
$w = s_{i_1}\cdot \ldots\cdot s_{i_m}$ for $w$, and consider the closed subscheme of $(G/B)^{m+1}$,
\begin{equation*}
 \bar{\CO}_B (s_i, \ldots, s_{i_m}) = \{ (B_0, \ldots, B_m) \mid {\rm inv} (B_{j-1}, B_j) 
\leq s_{i_j} \, , j = 1, \ldots, m \} \, .
\end{equation*}
Under the projection to the $0$-th and the $m$-th component, we obtain a morphism
\begin{equation*}
 p : \bar\CO_B (s_i, \ldots, s_{i_m}) \lra \overline{\CO_B (w)} \, .
\end{equation*}
\begin{proposition}
 If $w$ is a Coxeter element, $p$ is an isomorphism. In particular, $\overline{\CO_B (w)}$ is smooth, and the
complement of $\CO_B (w)$ in $\overline{\CO_B (w)}$ is a divisor with normal crossings.
\end{proposition}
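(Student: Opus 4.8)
The plan is to prove the statement by induction on the length $m$ of the reduced decomposition $w = s_{i_1}\cdots s_{i_m}$ of the Coxeter element, keeping track of the fact that for a Coxeter element all the indices $i_1,\ldots,i_m$ are \emph{pairwise distinct}. The map $p$ is always proper and birational onto its image (it is an isomorphism over the open stratum $\CO_B(w)$, by the standard properties of the Demazure variety), and its target $\overline{\CO_B(w)}$ is irreducible of dimension $m$; so it suffices to show $p$ is injective on $\bar\BF_p$-points (equivalently, a bijection), since $\bar\CO_B(s_{i_1},\ldots,s_{i_m})$ is smooth and a proper birational bijective morphism onto a normal variety is an isomorphism — and in fact it will be cleaner to show $p$ is a closed immersion directly. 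Alternatively, and perhaps more efficiently, I would show that the Demazure variety $\bar\CO_B(s_{i_1},\ldots,s_{i_m})$ already equals the closed subscheme $\{(B_0,\dots,B_m)\mid \mathrm{inv}(B_0,B_m)\le w\}$ of $(G/B)^2$ viewed via the two end-point projections, i.e.\ that the ``intermediate'' Borels $B_1,\ldots,B_{m-1}$ are uniquely determined by $B_0$ and $B_m$.

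The key geometric input is the following combinatorial fact about Coxeter elements: if $w=s_{i_1}\cdots s_{i_m}$ is reduced with all $i_j$ distinct, and $w' \le w$ in the Bruhat order, then $w'$ has a \emph{unique} reduced expression as a subword of $s_{i_1}\cdots s_{i_m}$, namely by deleting exactly the letters not appearing, and moreover the ``subword property'' holds in the strong form that there is a unique way to realize $w'$ as a subexpression. This is classical (it is essentially the statement that the Bruhat interval below a Coxeter element is a Boolean lattice). Granting this, I would argue: given $(B_0,\ldots,B_m)$ in the Demazure variety, each consecutive relative position $\mathrm{inv}(B_{j-1},B_j)$ is either $e$ or $s_{i_j}$; the composite relative position $\mathrm{inv}(B_0,B_m)$ is then some subword $w'\le w$, and by the uniqueness just quoted the pattern of which steps are ``non-trivial'' (i.e.\ $=s_{i_j}$) is determined by $w'$. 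Then a downward/upward induction using the elementary fact that in a rank-one situation (two Borels in relative position $\le s$, with one fixed) the other is uniquely pinned down once the relative position is known, shows that $B_1,\ldots,B_{m-1}$ are determined by $B_0,B_m$. Hence $p$ is injective; combined with smoothness of the source, irreducibility and the birationality, $p$ is an isomorphism. (I would phrase the final step as: $p$ is a proper bijective morphism of varieties with smooth source onto $\overline{\CO_B(w)}$, and a Demazure resolution is always birational, so $p$ is the normalization map; thus $\overline{\CO_B(w)}$ is normal, and a bijective birational morphism onto a normal variety is an isomorphism — or invoke Zariski's main theorem directly.)

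Once $p$ is an isomorphism, smoothness of $\overline{\CO_B(w)}$ is immediate since the Demazure variety is an iterated $\BP^1$-bundle, hence smooth. For the normal-crossings statement, I would identify, under $p$, the boundary $\overline{\CO_B(w)}\setminus\CO_B(w) = \biguplus_{w'<w}\CO_B(w')$ with the corresponding locus in the Demazure variety, which by the above combinatorics is exactly $\bigcup_{j=1}^m D_j$, where $D_j = \{(B_0,\ldots,B_m)\mid \mathrm{inv}(B_{j-1},B_j)=e\}$ is the $j$-th ``diagonal'' divisor; each $D_j$ is smooth (it is the Demazure variety for the subword with $s_{i_j}$ deleted, again an iterated $\BP^1$-bundle), and they meet transversally because at each step the conditions are imposed on independent $\BP^1$-factors of the tower. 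So $\overline{\CO_B(w)}\setminus\CO_B(w)=\bigcup_j D_j$ is a normal crossings divisor.

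The main obstacle I expect is making the uniqueness-of-intermediate-Borels step fully rigorous: one must rule out ``non-reduced'' subexpressions of $s_{i_1}\cdots s_{i_m}$ representing a shorter element $w'$, i.e.\ show that if some consecutive $\mathrm{inv}(B_{j-1},B_j)=e$ then the ``length drop'' is forced and accounted for uniquely. For a Coxeter element this is exactly where the distinctness of the $i_j$ is used — it is false for general $w$ (e.g.\ $w=s_1s_2s_1$ has $w'=s_1$ realizable in two ways), and indeed $p$ is not an isomorphism for general $w$. So the proof must invoke (and I would cite or prove in a sentence) the fact that the Bruhat interval $[e,w]$ below a Coxeter element is Boolean, equivalently that subwords of a repetition-free word are determined by the element they represent; everything else is then a routine induction up the Demazure tower.
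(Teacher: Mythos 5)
Your proposal follows essentially the same route as the paper: the paper also uses the Bott--Samelson/Demazure morphism, notes $p$ is birational, establishes bijectivity, invokes normality of the Schubert variety $\overline{\CO_B(w)}$, and concludes via Zariski's Main Theorem. The one substantive difference is that the paper simply cites Hansen (\cite{H}, Lemma~1) for the bijectivity of $p$, whereas you unpack that citation into the combinatorial argument based on the Boolean structure of the Bruhat interval $[e,w]$ below a Coxeter element (uniqueness of reduced subexpressions inside a repetition-free word, plus the standard ``convexity'' fact that if $\ell(uv)=\ell(u)+\ell(v)$ then the middle Borel in a gallery $B_0,B,B_m$ with $\mathrm{inv}(B_0,B)=u$, $\mathrm{inv}(B,B_m)=v$ is unique). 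That argument is correct and is, in effect, Hansen's proof; it is a useful expansion but not a different method.

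One minor logical slip worth correcting: in your closing sentence you write that since $p$ is proper, birational, and bijective with smooth source, ``$p$ is the normalization map; thus $\overline{\CO_B(w)}$ is normal.'' This inference is false --- the normalization of a non-normal variety can be a bijection (cuspidal cubic), and normality of the target is not a consequence of bijectivity. You need, as a separate ingredient, the classical theorem that Schubert varieties are normal; you do in fact invoke this correctly earlier in the paragraph (``a proper birational bijective morphism onto a normal variety is an isomorphism''), and your fallback ``or invoke Zariski's main theorem directly'' is exactly what the paper does. So the argument stands once that one sentence is excised.
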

\begin{proof}
As is well known, $p$ is birational. 
 The bijectivity of $p$ is proved in \cite{H}, Lemma 1. Since $\overline{\CO_B (w)}$ is normal, $p$ is an
isomorphism by Zariski's Main Theorem. The remaining assertions follow from properties of Demazure 
resolutions.
\end{proof}
\begin{corollary}\label{gencox}
 Let $w$ be a Coxeter element. The closure $\overline{X_B (w)}$ of $X_B (w)$ in $G/B$ is smooth and its
complement is a divisor with normal crossings. It is stratified by Deligne-Lusztig varieties for smaller
elements than $w$ in the Bruhat order,
\begin{equation*}
 \overline{X_B (w)} = \biguplus_{w' \leq w} X_B (w') \, .
\end{equation*}
Furthermore, $\overline{X_B (w)}$ is irreducible.
\end{corollary}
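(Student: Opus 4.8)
The plan is to obtain $\overline{X_B(w)}$ by intersecting the Schubert stratification of $\overline{\CO_B(w)}$ furnished by the preceding Proposition with the graph of Frobenius. First I would consider the morphism
\[
 f\colon G/B \lra (G/B)\times (G/B), \qquad B' \longmapsto (B', \Phi(B')),
\]
which is the graph of $\Phi$ and hence, $G/B$ being separated, a closed immersion. By the definition of the generalized Deligne--Lusztig varieties one has $X_B(w') = f^{-1}\bigl(\CO_B(w')\bigr)$ for every $w'\in W$. Applying $f^{-1}$ to the identity $\overline{\CO_B(w)} = \biguplus_{w'\leq w}\CO_B(w')$ of the preceding Proposition (which applies since $w$ is a Coxeter element) gives
\[
 f^{-1}\bigl(\overline{\CO_B(w)}\bigr) = \biguplus_{w'\leq w} X_B(w') ,
\]
a closed subscheme of $G/B$ containing $X_B(w)$, and hence containing $\overline{X_B(w)}$.

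To upgrade this inclusion to an equality, and to get smoothness and the normal crossings property, I would invoke the G\"ortz local model diagram. As recalled before \eqref{dimDL}, for the $G$-orbit $\CO_B(w)$ this produces a scheme $Z$ with smooth surjective morphisms onto $\CO_B(w)$ and onto $X_B(w)$; since the construction is built from the orbit structure on the flag variety, it should extend over the Schubert closure, yielding a scheme $Z'$ with smooth surjective morphisms $Z'\to \overline{\CO_B(w)}$ and $Z'\to f^{-1}\bigl(\overline{\CO_B(w)}\bigr)$, compatible with the two stratifications and with the divisorial boundaries. By the preceding Proposition $\overline{\CO_B(w)}$ is smooth and the complement of $\CO_B(w)$ in it is a divisor with normal crossings; hence the same holds for $f^{-1}\bigl(\overline{\CO_B(w)}\bigr)$. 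Being smooth, it is equidimensional, and by \eqref{dimDL} with $I=\emptyset$ its open stratum $X_B(w)$ has dimension $\ell(w)$ while $\dim X_B(w') = \ell(w')<\ell(w)$ for $w'<w$. Therefore $X_B(w)$ is open and dense in $f^{-1}\bigl(\overline{\CO_B(w)}\bigr)$, so
\[
 \overline{X_B(w)} = f^{-1}\bigl(\overline{\CO_B(w)}\bigr) = \biguplus_{w'\leq w} X_B(w'),
\]
and it is smooth with normal crossings complement.

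For irreducibility I would note that $\CO_B(w)$ is a single orbit of the connected group $G$ on $(G/B)\times(G/B)$, hence irreducible, so $\overline{\CO_B(w)}$ is irreducible; since $Z'\to \overline{\CO_B(w)}$ and $Z'\to \overline{X_B(w)}$ are smooth surjective with connected fibres (the fibres of the local model diagram are connected), $Z'$ and hence $\overline{X_B(w)}$ are irreducible. Alternatively, one can argue that $X_B(w)$ itself is irreducible because a Coxeter element is not contained in any proper standard parabolic subgroup $W_I$, and then pass to the closure.

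The one point that really needs care, and which I expect to be the main obstacle, is the claim that the G\"ortz local model diagram for the orbit $\CO_B(w)$ extends to its Schubert closure $\overline{\CO_B(w)}$ with both projections remaining smooth and surjective with connected fibres, compatibly with the strata and with the boundary divisor; once this is granted every step above is formal. All the remaining input---the set-theoretic stratification, the smoothness and normal crossings boundary of $\overline{\CO_B(w)}$, and the dimension formula \eqref{dimDL}---is already available in the excerpt.
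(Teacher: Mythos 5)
Your proposal follows essentially the same route as the paper: pull back the Schubert stratification along the graph of Frobenius and transfer smoothness, the normal-crossings boundary, and the stratification via the G\"ortz local model diagram of \cite{GY}, 5.2. You also correctly identify the one point the paper glosses over, namely that the local model diagram is needed not just over the open stratum but over the whole Schubert closure, and your dimension count filling in why $X_B(w)$ is dense in $f^{-1}\bigl(\overline{\CO_B(w)}\bigr)$ is a useful detail the paper omits.

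Your primary argument for irreducibility, however, does not work. The morphism $Z'\to\overline{\CO_B(w)}$ in the local model diagram does \emph{not} have connected fibres: it factors through the Lang map $g\mapsto g^{-1}\Phi(g)$, which is a $G(\BF_q)$-torsor, hence finite \'etale, so its fibres are finite and disconnected. If both legs of the diagram had connected fibres, every Deligne--Lusztig variety would be irreducible, which already fails for $w=1$ (where $X_B(1)=(G/B)(\BF_q)$ is a finite set of points). Only the leg $Z'\to\overline{X_B(w)}$ (a $B$-torsor) has connected fibres, and that is not enough to conclude. So you must fall back on your alternative argument: that $X_B(w)$ is irreducible because the Coxeter element $w$ is not contained in any proper ($\Phi$-stable) standard parabolic subgroup of $W$, which is a nontrivial theorem (Lusztig for Coxeter elements; see also G\"ortz's note on connectedness). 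This is exactly what the paper does by citing the result as well-known.

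With that correction, the rest of your proposal is sound and matches the paper's argument, with more of the intermediate steps made explicit.
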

\begin{proof}
 The first assertion follows from the previous proposition, again   by the {\it G\"ortz  local model diagram} in the context of Deligne-Lusztig varieties, comp. \cite{GY}, 5.2. This implies that $ \overline{X_B (w)} $ is smoothly equivalent to $\overline{\CO_B (w)}$, both equipped with their natural stratification. The second assertion is well-known, cf., e.g.,   \cite{G}.
\end{proof}
From now on we consider the symplectic group. Let us fix notation as follows. Let $(V, \langle\,, \,\rangle)$ be 
a symplectic vector space of dimension $n=2 m$ over $\BF_p$. We choose a basis $(e_1, \ldots, e_{2 m})$ of $V$ such that
\begin{equation*}
 \langle e_i, e_{2m + 1-j} \rangle = \pm \, \delta_{i,j} \, ,\,\, i, j=1, \ldots, 2m\, .
\end{equation*}
Let $T \subset {\rm Sp} (V)$ be the diagonal torus, and $B \subset {\rm Sp} (V)$ the group of upper triangular 
matrices. Then the simple reflections in $W$ can be enumerated as follows:

\smallskip

\noindent$\bullet$ for $1 \leq i \leq m-1$, the reflection $s_i$ permutes $e_i$ and $e_{i+1}$, and also $e_{2m + 1-i}$
and $e_{2m-i}$, 

\smallskip

\noindent$\bullet$  $s_m$ permutes $e_m$ and $e_{m+1}$.

\smallskip

\noindent We consider the Coxeter element $w = s_1 s_2 \cdot \, \ldots \, \cdot s_m$. It sends 
$e_i \longmapsto e_{i+1}$ for $1 \leq i \leq m-1$, and $e_m \longmapsto e_{2m}$, and $e_{m+1} \longmapsto e_1$, 
and $e_i \longmapsto e_{i-1}$ for $m+2 \leq i \leq 2m$. 

The corresponding Deligne-Lusztig variety $X_B(w)$ can be identified with the variety of complete isotropic flags
\begin{equation*}
 (0) = F_{0} \subset F_{1} \subset \ldots \subset F_m
\end{equation*}
such that for $i=0, \ldots, m-1$
\begin{equation*}
 F_{i} = F_{i+1} \cap \Phi (F_{i+1}) \, .
\end{equation*}
(Note that our normalization of the relative position map is the opposite of the one of Deligne and Lusztig 
\cite{DL}; in their notation, we are considering $X_B(w^{-1})$). Associating to such a flag its $m$-th member,
we obtain an identification
\begin{equation}\label{descrDL}
\begin{aligned}
 X_B (w) = & \{ U \mid U \, {\rm Lagrangean \, subspace \, of }\, V \, {\rm such \, that}\\
& {\rm dim} \, \big(U \cap \Phi (U) \cap \ldots \cap \Phi^i (U)\big) = m - i,   0 \leq i \leq m \} \, .
\end{aligned}
\end{equation}
Also, the closure $\overline{X_B(w)}$ in $G/B$ can be characterized by the conditions
\begin{equation*}
 {\rm dim} (F_{m-i} \cap \Phi (F_{m-i})) \geq m-i-1, \,\,\, \, i = 0, \ldots, m-1 \, .
\end{equation*}
For $0 \leq i \leq m$, let
\begin{equation*}
 w_i = s_{m+1-i} \ldots s_m \, .
\end{equation*}
Hence $w_0 = {\rm id}$, and $w_1 = s_m$, and $w_m = w$.

Let $P$ be the Siegel parabolic, i.e., $P$ stabilizes the Lagrangean subspace spanned by $\{ e_1, \ldots, e_m \}$.
Then $G/P$ parametrizes the Lagrangean subspaces of $V$. We consider the subvariety  $S = S_V$ of $G/P$ given by
\begin{equation}\label{SV}
 S =S_V= \{ U \mid U \, {\rm Lagrangean,} \, \dim (U \cap \Phi (U)) \geq m-1 \} \, .
\end{equation}
\begin{proposition}\label{SDLV}
 $S$ can be identified with the closure of the generalized Deligne-Lusztig variety $X_{P}(w_1)$ in $G/P$. In particular,
$S$ is a normal variety with isolated singularities.
\end{proposition}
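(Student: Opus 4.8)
The plan is to first pin down $X_P(w_1)$ as a set, then compute its closure, and finally read off normality and the shape of the singular locus from the G\"ortz local model diagram.

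To start I would make the double‑coset dictionary explicit. The Siegel parabolic is the standard parabolic $P=P_I$ with $I=\{s_1,\ldots,s_{m-1}\}$, so $W_I$ is the symmetric group on $\{e_1,\ldots,e_m\}$ and $G/P$ is the variety of Lagrangian subspaces of $V$. The $G$‑orbits on $(G/P)\times(G/P)$ are the loci $\{(U_1,U_2)\mid\dim(U_1\cap U_2)=m-j\}$, $0\le j\le m$, so the relative position morphism identifies $W_I\backslash W/W_I$ with $\{0,1,\ldots,m\}$ via $\inv(U_1,U_2)\mapsto m-\dim(U_1\cap U_2)$, under which the Bruhat order becomes the natural total order. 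Since $w_1=s_m$ is the unique double coset of positive minimal length, and it covers only the identity class, it corresponds to $j=1$, and hence $X_P(w_1)=\{U\text{ Lagrangian}\mid\dim(U\cap\Phi(U))=m-1\}$.

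Next I would compute the closure. By the closure formula $\overline{X_{P_I}(w)}=\biguplus_{w'\le w}X_{P_I}(w')$ — which holds here exactly as in Corollary~\ref{gencox}, the G\"ortz local model diagram transporting to $X_{P_I}(w)$ the orbit‑closure relations $\overline{\CO_{P_I}(w)}=\biguplus_{w'\le w}\CO_{P_I}(w')$ — and because $w_1$ covers only the identity, one gets $\overline{X_P(w_1)}=X_P(w_1)\uplus X_P(\mathrm{id})$. As $X_P(\mathrm{id})=\{U\mid U=\Phi(U)\}$ is the finite set of $\BF_p$‑rational Lagrangians, this is exactly $\{U\mid\dim(U\cap\Phi(U))\ge m-1\}=S$. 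Since $S$ is closed (the function $U\mapsto\dim(U\cap\Phi(U))$ is upper semicontinuous) and reduced, and $\overline{X_P(w_1)}$ is by definition the reduced closure, the two agree as closed subschemes of $G/P$. For normality and the singularity structure I would again invoke the G\"ortz local model diagram: $\overline{X_P(w_1)}$ is smoothly equivalent to the closure $\overline{\CO_P(w_1)}$ of the $G$‑orbit $\CO_P(w_1)\subset(G/P)\times(G/P)$. Projecting to the first factor exhibits $\overline{\CO_P(w_1)}$ as a Zariski‑locally trivial fibration over $G/P$ whose fibre is the Schubert variety $\{U\mid\dim(U\cap U_0)\ge m-1\}$ of the Lagrangian Grassmannian; this orbit closure is normal by \cite{BP} (normality of Schubert varieties in $G/P$, equivalently of such $G$‑orbit closures in $G/P\times G/P$), so $\overline{\CO_P(w_1)}$, and therefore $S$, is normal. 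On the other hand $X_P(w_1)=\CO_P(w_1)\cap\Gamma_\Phi$ is smoothly equivalent to the homogeneous space $\CO_P(w_1)$, hence smooth, so $\mathrm{Sing}(S)\subseteq S\setminus X_P(w_1)=X_P(\mathrm{id})$ is finite; thus $S$ has at worst isolated singularities (and for $m\ge 2$ it is genuinely singular at each rational Lagrangian, the fibre Schubert variety being singular there).

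The double‑coset bookkeeping and the semicontinuity argument are routine; the real content — and the main obstacle — is this last step, which rests on two imported facts that have to be applied with care: a version of the G\"ortz local model diagram precise enough to transfer normality in both directions between $\overline{X_P(w_1)}$ and $\overline{\CO_P(w_1)}$, and the normality of Schubert varieties in $G/P$, which I would cite from \cite{BP}.
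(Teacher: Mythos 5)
Your argument is essentially the paper's: decompose $S$ into $X_P(\mathrm{id}) \uplus X_P(w_1)$, identify $S$ with $\overline{X_P(w_1)}$, and deduce normality and isolated singularities from normality of generalized Schubert varieties plus smooth equivalence via the G\"ortz local model diagram ([GY]); you just spell out the double-coset bookkeeping more explicitly. One small correction: the normality of Schubert varieties in $G/P$ should not be attributed to \cite{BP} — that paper is used in Remark \ref{Ssing} only to show that $S$ \emph{is} genuinely singular for $m\geq 2$; normality of (generalized) Schubert varieties is a classical result (Ramanan--Ramanathan, Mehta--Srinivas, et al.), and the paper simply invokes it as a known fact.
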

\begin{proof}
 If $U \in S $,  then ${\rm inv}(U, \Phi (U))$ is either equal to the identity or
equal to $s_m = w_1$ in $W_0 \bs W/W_0$. Hence we obtain a disjoint decomposition
\begin{equation*}
 S = X_{P}({\rm id}) \biguplus X_{P} (w_1) \, ,
\end{equation*}
where $X_{P}({\rm id}) = X_{P} (w_0)$ is a closed and $X_{P} (w_1)$ is an open subset, which proves the
first claim. The second claim follows from the fact that generalized Schubert varieties are normal, and
$\overline{X_{P} (w_1)}$ is smoothly equivalent to $\overline{\CO_{P} (w_1)}$, cf. \cite{GY}.
\end{proof}
\begin{remark}\label{Ssing}
 It follows from \cite{BP}, Proposition~4.4, that for $m \geq 2$, $S$ is singular at the points in 
$S \setminus X_{P} (w_1)$, i.e., in $X_{P}({\rm id})$. For $m = 1$, $S$ is isomorphic to $\BP^1$.
\end{remark}
We next construct a stratification of $S$.
Let, for $i = 0, \ldots, m - 1$,
\begin{equation*}
I_i = \{ 1, \ldots, m - 1 - i \} \, ,
\end{equation*}
and let $W_i = W_{I_i}$ and $P_i = BW_iB$ be the corresponding standard parabolic subgroup. Hence 
$P_0 = P$ and $P_{m-1} = B$. In the next statement we let $P_m = P_{m-1} = B$.
\begin{proposition}\label{decomp}
 There is a decomposition of $S$ into the disjoint union of locally closed subvarieties
\begin{equation*}
 S = \biguplus_{i = 0}^m X_{P_i} (w_i)\, ,
\end{equation*}
such that for every $j$ with $0 \leq j \leq m$, the subset $\biguplus_{i = 0}^j X_{P_i} (w_i)$ is closed in
$S$. The variety $X_{P_m} (w_m) = X_B(w)$ is open in $S$, and irreducible of dimension $m$. In particular,
$S$ is irreducible of dimension $m$.
\end{proposition}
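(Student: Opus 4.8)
The plan is to produce the decomposition by stratifying each Lagrangian $U \in S$ according to the ``defect'' $i$ measuring how far the chain of subspaces $U \cap \Phi(U) \cap \ldots \cap \Phi^j(U)$ deviates from decreasing in dimension by exactly one at each step, exactly as in \eqref{descrDL}. First I would set, for $U \in S$ and $0 \le i \le m$, the locally closed condition
\begin{equation*}
 \dim\bigl(U \cap \Phi(U) \cap \ldots \cap \Phi^{j}(U)\bigr) = m - j \quad\text{for } 0 \le j \le i, \qquad \dim\bigl(U \cap \Phi(U) \cap \ldots \cap \Phi^{i+1}(U)\bigr) = m - i
\end{equation*}
(with the convention that for $i = m$ only the first family of conditions is imposed). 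Here the key input is that membership in $S$ forces $\dim(U \cap \Phi(U)) \ge m-1$, so at each stage the dimension of the successive intersection drops by $0$ or $1$; the stratum is cut out by the first $j_0$ drops being $1$ and the next one being $0$. One then reads off from the description of $X_{P_i}(w_i)$ via the flag $F_\bullet$ with $F_{m-i}$ recovered as that stabilized intersection that the locus so defined is precisely $X_{P_i}(w_i)$: associating to $U$ the flag $F_j = U \cap \Phi(U) \cap \ldots \cap \Phi^{m-j}(U)$ for $j \ge m-i$ (and truncating) identifies it with the variety of parabolics of type $I_i$ in relative position $w_i = s_{m+1-i}\cdots s_m$ to their $\Phi$-translate. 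That the conditions for distinct $i$ are mutually exclusive and jointly exhaustive gives the disjoint union $S = \biguplus_{i=0}^m X_{P_i}(w_i)$.

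Next I would check the closedness assertion: $\biguplus_{i=0}^j X_{P_i}(w_i)$ is the locus where $\dim(U \cap \Phi(U) \cap \ldots \cap \Phi^{j+1}(U)) \ge m - j$, which is closed because dimensions of intersections of the tautological bundle with its $\Phi$-twists are upper semicontinuous. Equivalently this is the closure of $\CO_{P_{j+1}}(\le w_{j+1})$ intersected with the Frobenius graph, matching the Schubert-closure picture of Corollary \ref{gencox}. For the final clauses: $X_{P_m}(w_m) = X_B(w)$ is the complement of a closed set, hence open; its irreducibility and $\dim = m$ follow from Corollary \ref{gencox} (with $w$ the Coxeter element, $\ell(w) = m$) via the description \eqref{descrDL}, since $X_B(w) = X_B(w_m)$ and $\overline{X_B(w)}$ is irreducible of dimension $\ell(w) = m$. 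Finally, $S$ is the closure of $X_B(w)$ inside $S$ — indeed every stratum $X_{P_i}(w_i)$ with $i < m$ lies in the closure of $X_B(w)$, which one sees either from the incidence of Schubert cells or directly by deforming a flag to increase the defect — so $S$ is irreducible of dimension $m$ as well.

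The main obstacle I anticipate is the bookkeeping that identifies the defect-$i$ locus inside $S$ with the \emph{generalized} Deligne-Lusztig variety $X_{P_i}(w_i)$ for the specific parabolic $P_i = BW_{I_i}B$ and element $w_i$, rather than with some other Schubert datum: one must match the combinatorics of the intersection chain $U \cap \Phi(U) \cap \cdots$ against the relative-position calculus in $W_{I_i}\backslash W/W_{I_i}$ and confirm that $w_i$ is exactly the double coset that records ``$F_{m-i}$ stabilizes but $F_{m-i+1}$ does not.'' This is the same type of argument used to obtain \eqref{descrDL} and the description of $S$ in Proposition \ref{SDLV}, so it should go through, but it requires care with the enumeration of simple reflections fixed above and with the passage between flags and parabolic subgroups. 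The topological statements (openness, closedness, irreducibility, dimension) are then comparatively routine given Corollary \ref{gencox}.
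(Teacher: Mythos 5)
Your proposal is essentially correct and follows the same underlying strategy as the paper: stratify $S$ by the ``defect'' of the chain $U \cap \Phi(U) \cap \cdots \cap \Phi^j(U)$, recover the partial flag from these intersections, and identify each stratum with $X_{P_i}(w_i)$. The difference is one of packaging. You define all the strata at once (as the locus where the chain first stabilizes at step $i$) and argue closedness by upper-semicontinuity of intersection dimensions; the paper instead builds the decomposition inductively, peeling off one stratum at a time along the forgetful morphisms $\pi\colon G/P_j \to G/P_{j-1}$, proving
\begin{equation*}
\pi^{-1}\bigl(X_{P_{j-1}}(w_j)\bigr) = X_{P_j}(w_j) \,\uplus\, X_{P_j}(w_{j+1}),
\end{equation*}
and showing that $\pi$ restricted to this preimage is an isomorphism. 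The one place you flag as ``should go through'' --- the scheme-theoretic identification of the defect-$i$ locus with $X_{P_i}(w_i)$ and the matching against the double-coset condition in $W_{I_i}\backslash W / W_{I_i}$ --- is exactly what the paper's induction handles, and its version is cleaner: the inverse of $\pi$ is written down explicitly by appending $F_{m-j} = F_{m-(j-1)} \cap \Phi(F_{m-(j-1)})$ to the flag, after which one only has to check that this new member has the right codimension and is not $\Phi$-stable, i.e., one verifies the relative-position combinatorics for a single new simple reflection $s_{m-j}$ at a time rather than for the full element $w_i$ in one shot. Both routes also derive irreducibility and dimension of $X_B(w)$ from Corollary \ref{gencox}, so those parts agree. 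In short: a valid reorganization of the same argument, with the burden of verification placed in a slightly less convenient spot.
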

\begin{proof}
 As in the proof of Proposition \ref{SDLV} we see that there is a disjoint decomposition
\begin{equation*}
 S = X_{P_0} ({\rm id}) \biguplus X_{P_0} (w_1) \, ,
\end{equation*}
where $X_{P_0} ({\rm id}) = X_{P_0} (w_0)$ is a closed and $X_{P_0} (w_1)$ is an open subset.

We will show by induction that, for $1 \leq j \leq m - 1$,
\begin{equation*}
 X_{P_{j -1}} (w_j) = X_{P_j} (w_j) \biguplus X_{P_j} (w_{j +1}) \, ,
\end{equation*}
induced from the natural morphism
\begin{equation*}
 \pi : G/P_j \lra G/P_{j-1} \, ,
\end{equation*}
that associates to the isotropic partial flag of length $j + 1$
\begin{equation*}
 F_{m-j} \subset F_{m-(j-1)} \subset \ldots \subset F_m \,,\, \,\,\dim F_{i} =  i
\end{equation*}
the partial flag of length $j$, where the first member is ignored. Since $w_{j + 1} = s_{m-j} \cdot w_j$ and
$\{ s_{m-j} \} = {I}_{j-1}\setminus{ I}_j$, it is clear that
\begin{equation}\label{invim}
 \pi^{-1} (X_{P_{j-1}} (w_j)) = X_{P_j} (w_j) \biguplus X_{P_j} (w_{j + 1}) \, ,
\end{equation}
where the first summand is closed in the LHS and the second summand is open. We have to see that the
restriction of $\pi$ to \eqref{invim} induces an isomorphism with $X_{P_{j-1}} (w_j)$. We claim that the inverse
morphism is given by
\begin{equation*}
 (F_{m-(j-1)} \subset \ldots \subset F_m) \mapsto (F_{m-j} \subset F_{m-(j-1)} \subset \ldots \subset F_m) \, ,
\end{equation*}
with
\begin{equation*}
 F_{m-j} = F_{m-(j+1)} \cap \Phi (F_{m-(j+1)}) \, .
\end{equation*}
Indeed, $\Phi (F_{m-j}) \neq F_{m-j}$  because otherwise $w_j$ would have to lie in
$I_{j-1}$,  which is absurd. On the other hand, the codimension of $F_{m-(j-1)} \cap \Phi (F_{m-(j-1)})$ in
$F_{m-(j-1)}$ is equal to one. Indeed, one checks inductively that
\begin{equation*}
 F_{m-i} = F_{m-(i-1)} \cap \Phi (F_{m-(i-1)}) \,,\,\,\, i = 1, \ldots, j-1 \, .
\end{equation*}
Hence
\begin{equation*}
 F_{m-(j-1)} \cap \Phi (F_{m-(j-1)}) = F_{m-(j-2)} \cap \Phi (F_{m-(j-2)}) \cap \Phi^2 (F_{m-(j-2)})
\end{equation*}
has codimension  $\leq 2$ in $F_{m-(j-2)}$, which proves the claim.

It is clear by construction that for any $j$ with $0 \leq j \leq m$, the subset 
$\biguplus_{i=0}^j X_{P_i} (w_i)$ is closed in $S$. The irreducibility of $X_B(w)$ is a general
fact on Deligne-Lusztig varieties attached to Coxeter elements, cf. Corollary \ref{gencox}.
\end{proof}

\noindent In the sequel we will sometimes write $S_i$ for the locally closed subschema $X_{P_i}(w_i)$ of $S$.

\begin{proposition}\label{dimS}
 For any $i$ with $0 \leq i \leq m$, $S_i$ has pure dimension $i$.
\end{proposition}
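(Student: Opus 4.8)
The plan is to use the dimension formula \eqref{dimDL} for generalized Deligne-Lusztig varieties, applied to each $S_i = X_{P_i}(w_i)$. Recall that $\dim X_{P_i}(w_i) = \ell_{I_i}(w_i) - \ell(w_{I_i})$, where $\ell_{I_i}(w_i)$ is the maximal length of an element in the double coset $W_{I_i} w_i W_{I_i}$ and $w_{I_i}$ is the longest element of $W_{I_i}$. So the task reduces to a purely combinatorial computation in the Weyl group of $\Sp_{2m}$: I need to show $\ell_{I_i}(w_i) - \ell(w_{I_i}) = i$ for each $i$ with $0 \leq i \leq m$.

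First I would record the relevant data. We have $I_i = \{1, \ldots, m-1-i\}$, so $W_{I_i}$ is a symmetric group $\mathfrak{S}_{m-i}$ (generated by the simple reflections $s_1, \ldots, s_{m-1-i}$, all of type $A$), with longest element of length $\binom{m-i}{2}$. The element $w_i = s_{m+1-i} \cdots s_m$ has length $i$ (it is a reduced word), and moreover its support $\{s_{m+1-i}, \ldots, s_m\}$ is disjoint from $I_i = \{s_1, \ldots, s_{m-1-i}\}$ — indeed $m+1-i > m-1-i$. The key point is then that because $w_i$ is "orthogonal'' to the parabolic $W_{I_i}$ in the appropriate sense, conjugating or multiplying $w_i$ by elements of $W_{I_i}$ only adds length coming from $W_{I_i}$ itself. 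Concretely, I would argue that the longest element of $W_{I_i} w_i W_{I_i}$ is $w_{I_i} \cdot w_i \cdot w_{I_i}$ (or a suitable variant) and that this product is length-additive, so $\ell_{I_i}(w_i) = 2\ell(w_{I_i}) + \ell(w_i)$ would give the wrong answer — so more care is needed. The correct statement, which I would extract from standard facts on parabolic double cosets (e.g. that in each double coset $W_J w W_J$ there is a unique maximal and unique minimal element, and lengths behave additively with respect to minimal coset representatives), is that the maximal-length element of $W_{I_i} w_i W_{I_i}$ has length $\ell(w_i) + \ell(w_{I_i})$ because $w_i$ normalizes things appropriately: since the simple roots in the support of $w_i$ are not linked to those of $I_i$ in the Dynkin diagram only for the inner ones — but $s_{m-i}$ and $s_{m+1-i}$ \emph{are} adjacent. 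So in fact the honest computation uses that $w_i$ sends the simple roots $\alpha_1, \ldots, \alpha_{m-1-i}$ of $I_i$ to positive roots not in $I_i$, whence $w_i$ is the minimal element of its coset $w_i W_{I_i}$ and also (by a symmetric argument on the left) the minimal element of $W_{I_i} w_i W_{I_i}$; then the maximal element is $w_{I_i} \cdot w_i$ of length $\ell(w_{I_i}) + \ell(w_i) = \binom{m-i}{2} + i$, but one must subtract $\ell(w_{I_i}) = \binom{m-i}{2}$, leaving $\dim S_i = i$.

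Alternatively, and perhaps more transparently, I would instead exploit the explicit flag description already set up in the proof of Proposition \ref{decomp}. There, $S_i = X_{P_i}(w_i)$ was identified with the locally closed set of isotropic partial flags $F_{m-i} \subset F_{m-(i-1)} \subset \cdots \subset F_m$ (with $\dim F_k = k$) satisfying $F_{m-\ell} = F_{m-(\ell-1)} \cap \Phi(F_{m-(\ell-1)})$ for $1 \le \ell \le i-1$ together with the open condition that $\Phi(F_{m-i}) \ne F_{m-i}$ while $\operatorname{codim}(F_{m-(i-1)} \cap \Phi(F_{m-(i-1)}), F_{m-(i-1)}) = 1$; equivalently, the open Deligne-Lusztig condition imposed on the top member. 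The map $S_i \to G/P_{i-1}$ forgetting $F_{m-i}$ realizes $S_i$ as (an open subset of) the Deligne-Lusztig-type locus of flags $F_{m-(i-1)} \subset \cdots \subset F_m$ with $\dim(F_{m-(i-1)} \cap \Phi(F_{m-(i-1)})) = m-i$, i.e. $X_B$-type condition for the Coxeter element $w_i$ in the appropriate Levi; by Corollary \ref{gencox} (more precisely the dimension statement $\dim X_B(w) = \ell(w)$ for a Coxeter element $w$, applied inside the relevant subgroup) this has pure dimension $\ell(w_i) = i$, and the fibers of the forgetting map over this locus are single points (the formula $F_{m-i} = F_{m-(i-1)} \cap \Phi(F_{m-(i-1)})$ determines $F_{m-i}$ uniquely), so $S_i$ has pure dimension $i$.

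The main obstacle is getting the Weyl-group bookkeeping exactly right — in particular correctly identifying $\ell_{I_i}(w_i)$, i.e. confirming that the maximal element of the double coset $W_{I_i} w_i W_{I_i}$ has length precisely $\binom{m-i}{2} + i$ and not something larger coming from interaction between $s_{m-i} \in I_i$ and $s_{m+1-i} \in \operatorname{supp}(w_i)$, which are adjacent nodes of the $C_m$ diagram. I would handle this by checking that $w_i(\alpha_j) > 0$ for all $j \le m-1-i$ (a short explicit root computation using the permutation action of $w_i$ on the $e_k$ given in the excerpt), which shows $w_i$ is the minimal-length element in $W_{I_i} w_i W_{I_i}$ on both sides, and then invoking the standard fact that the maximal element of such a double coset is obtained by premultiplying the minimal one by the longest element of $W_{I_i}$ in a length-additive way. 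Given the explicit flag model already in hand, I would actually present the geometric argument of the previous paragraph as the main proof and relegate the Weyl-group computation to a remark, since it avoids any delicate double-coset subtleties.
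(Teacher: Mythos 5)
Your first approach is essentially the paper's proof, but it contains a confused detour that the paper avoids. You worry that ``$s_{m-i}$ and $s_{m+1-i}$ are adjacent'' and therefore that the double-coset computation needs special care. But $s_{m-i}\notin I_i$: indeed $I_i=\{1,\dots,m-1-i\}$, so the closest index in $I_i$ to the support $\{m+1-i,\dots,m\}$ of $w_i$ is $m-1-i$, which is at distance $2$ from $m+1-i$ in the $C_m$ Dynkin diagram. Hence there is \emph{no} adjacency at all between $\mathrm{supp}(w_i)$ and $I_i$, and every generator of $w_i$ commutes with every generator of $W_{I_i}$. This is exactly the observation the paper uses: since $w_i$ commutes with $W_{I_i}$ (and $W_{I_i}$ has an involution $w_{I_i}$), one has $W_{I_i}w_iW_{I_i}=w_iW_{I_i}$ and the longest element of the double coset is simply $w_i\cdot w_{I_i}$, with length $\ell(w_i)+\ell(w_{I_i})=i+\binom{m-i}{2}$; subtracting $\ell(w_{I_i})$ gives $i$ immediately. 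Your subsequent fix via $w_i(\alpha_j)>0$ for $j\le m-1-i$ reaches the same conclusion, but in fact $w_i$ \emph{fixes} those simple roots (commutation again), so the min/max coset-representative machinery you invoke is overkill. Your second, geometric argument is sound and is essentially the content of Proposition \ref{irred} in the paper (irreducible components of $S_i$ are Deligne–Lusztig varieties for Coxeter elements of $\Sp_{2i}$, hence of dimension $i$); it is a valid alternative, though the paper keeps the two facts — dimension and component structure — as separate propositions with the dimension proved by the length computation.
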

\begin{proof}
 By \eqref{dimDL}, the dimension of $X_{P_i} (w_i)$ is given by the difference between the length of the longest
element of $W_iw_iW_i$ and the length of the longest element of $W_i$. Let $x_i$ be the longest element of
$W_i$. We claim that $w_i \cdot x_i$ is the longest element of $W_i \cdot w_i \cdot W_i$. This will prove the claim
since $\ell (w_i x_i) = i + \ell (x_i)$. Let $i \leq m - 1$. Now $w_i = s_{m-i+1}\cdot \ldots \cdot s_m$ commutes with
all elements of $W_i = \langle s_1, \ldots, s_{m-i-1} \rangle$. Hence any shortest expression of an element
in $W_i w_i W_i$ can be arranged so that all reflections in $I_i$ are at the end, and so the longest 
such element is indeed $w_i \cdot x_i$.
The case $i = m$ is trivial.
\end{proof}
The next proposition shows that the stratification of $S = S_V$ has good hereditary properties.
\begin{proposition}\label{irred}
  Let $0 \leq i \leq m \, .$
  
  \smallskip
  
\noindent (i) The irreducible components of $X_{P_i}(w_i)$ are indexed by the isotropic subspaces $W$ of $V$ of dimension
$m - i$.

\smallskip

\noindent (ii) Let $W$ be such an isotropic subspace. The irreducible component $X_{W}$ of $X_{P_i}(w_i)$ corresponding via (i) 
to $W$ is isomorphic to the Deligne-Lusztig variety for the symplectic group $\Sp (W^\vee/W)$ of size $2i$ and its standard
Coxeter element $w' = s'_1\cdot \ldots\cdot s'_i$. The closure of $X_W$ in $S_V$ is isomorphic to $S_{W^\vee/W}$, the variety
defined in the same way as $S_V$, but for the symplectic space $W^\vee/W$.
\end{proposition}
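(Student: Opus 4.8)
The plan is to deduce (i) and (ii) together from the $\Phi$-orbit structure on the smallest member of the flags, and then to obtain the closure statement by quotienting out that member. Recall that $G/P_i=G/P_{I_i}$ parametrizes isotropic flags $F_{m-i}\subset F_{m-i+1}\subset\dots\subset F_m$ with $\dim F_k=k$, and (via Proposition~\ref{decomp}) that the point of $S_V$ attached to such a flag lying in $X_{P_i}(w_i)$ is $U=F_m$, with $F_{m-k}=\bigcap_{j=0}^{k}\Phi^{j}(U)$ for $0\le k\le i$. First I would show that on $X_{P_i}(w_i)$ the member $F_{m-i}$ is defined over $\BF_p$, i.e.\ $\Phi(F_{m-i})=F_{m-i}$. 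For this, let $Q=P_J$ with $J=\Delta^*\setminus\{\alpha_{m-i}\}$ be the maximal parabolic stabilizing an isotropic $(m-i)$-subspace; then $P_i\subseteq Q$ and the projection $G/P_i\to G/Q$ sends $F_\bullet$ to $F_{m-i}$. Since $w_i=s_{m-i+1}\cdot\ldots\cdot s_m$ involves no reflection $s_{m-i}$, it lies in $W_J$, so its class in $W_J\backslash W/W_J$ is trivial; as the relative position morphism is compatible with the projection $G/P_i\to G/Q$, the points $F_{m-i}$ and $\Phi(F_{m-i})$ of $G/Q$ are in relative position $\mathrm{id}$, hence equal. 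Thus $F_\bullet\mapsto F_{m-i}$ is a morphism from $X_{P_i}(w_i)$ to the finite set of isotropic $(m-i)$-subspaces of $V$, its fibres $X_W$ are open and closed, and $X_{P_i}(w_i)=\biguplus_W X_W$.

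Next I would identify each $X_W$. Fix an isotropic $W\subseteq V$ with $\dim W=m-i$; it is $\Phi$-fixed since $V$ is an $\BF_p$-space. Then $W^\vee/W$ is a symplectic $\BF_p$-space of dimension $2i$ carrying the induced Frobenius, and $U\mapsto U/W$ identifies the variety of isotropic flags in $V$ beginning with $W$ with the full isotropic flag variety of $\Sp(W^\vee/W)$. Under this identification the simple reflections $s_{m-i+1},\dots,s_m$ of $\Sp(V)$ occurring in $w_i$ correspond to the standard simple reflections $s'_1,\dots,s'_i$ of $\Sp(W^\vee/W)$; and, $F_{m-i}=W$ being frozen, the relative position of $F_\bullet$ with $\Phi(F_\bullet)$ is computed faithfully in the Levi quotient $\Sp(W^\vee/W)$ of $Q$ and equals the image of $w_i$, namely the standard Coxeter element $w'=s'_1\cdot\ldots\cdot s'_i$. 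Hence $X_W$ is isomorphic to the Deligne--Lusztig variety of $\Sp(W^\vee/W)$ attached to $w'$, which by Corollary~\ref{gencox} is irreducible of dimension $i$ (and nonempty, so that every $W$ indeed occurs). Combined with the first paragraph this proves (i) and the first assertion of (ii).

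Finally, for the closure in $S_V$, I would consider $Z_W=\{U\in S_V\mid W\subseteq U\}$, which is closed in $S_V$. Since $W$ is $\Phi$-fixed, $W\subseteq U\cap\Phi(U)$ whenever $W\subseteq U$, so $\dim\big(U\cap\Phi(U)\big)=(m-i)+\dim\big(\bar U\cap\Phi(\bar U)\big)$ with $\bar U=U/W$ and $\Phi$ the induced Frobenius on $W^\vee/W$; therefore $U\mapsto\bar U$ is an isomorphism $Z_W\xrightarrow{\ \sim\ }S_{W^\vee/W}$. Under it the subset $X_W$, which inside $S_V$ is the locus $\bigcap_{j=0}^{i}\Phi^{j}(U)=W$, corresponds to the locus $\bigcap_{j=0}^{i}\Phi^{j}(\bar U)=0$, which by the stratification of Proposition~\ref{decomp} is precisely the open top stratum (a copy of the Deligne--Lusztig variety for the Coxeter element) of $S_{W^\vee/W}$. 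As $S_{W^\vee/W}$ is irreducible of dimension $i$, this stratum is dense, so the closure of $X_W$ in $Z_W$ — equivalently in $S_V$ — is all of $Z_W$, i.e.\ $\overline{X_W}\cong S_{W^\vee/W}$, which is the last assertion of (ii).

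I expect the main obstacle to be the claim in the second paragraph that the $\Sp(V)$-relative position of two isotropic flags sharing the fixed first member $W$ is read off faithfully from the Levi quotient $\Sp(W^\vee/W)$ of $Q$, together with the bookkeeping matching $s_{m-i+1},\dots,s_m$ with $s'_1,\dots,s'_i$ and hence $w_i$ with the standard Coxeter element of $\Sp(W^\vee/W)$; everything else is then essentially formal manipulation of flags, dual lattices and the already established facts about $S_V$.
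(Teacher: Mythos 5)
Your argument is correct and takes essentially the same route as the paper: you deduce $\Phi(F_{m-i})=F_{m-i}$ (the paper states this without proof, you justify it via the projection $G/P_i\to G/Q$ and the observation that $w_i\in W_J$), identify the fibre $X_W$ with the Deligne--Lusztig variety for $\Sp(W^\vee/W)$ and its Coxeter element by passing to the Levi quotient, and obtain the closure statement by showing $\{U\in S_V\mid W\subset U\}\cong S_{W^\vee/W}$ with $X_W$ as its dense top stratum. The paper's proof is a terser version of the same plan; your write-up supplies the details it leaves implicit (the $\Phi$-fixity of $F_{m-i}$, the compatibility of relative position with the projection, and the density argument giving $\overline{X_W}=Z_W$).
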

\begin{proof}
(i) A point in $X_{P_i}(w_i)$ corresponds to a partial isotropic flag
\begin{equation*}
 F_{m-i} \subset F_{m-(i-1)} \subset \ldots \subset F_m
\end{equation*}
with $\Phi (F_{m-i}) = F_{m-i}$. Hence $F_{m-i}$ corresponds to an $\BF_p$-rational isotropic subspace $W$ of 
dimension $m - i$, and the subvariety of points of $X_{P_i}(w_i)$ with $F_{m-i} = W$ can be identified with
the Deligne-Lusztig variety for $\Sp (W^\vee/W)$ and the standard Coxeter element. Since this Deligne-Lusztig
variety is irreducible, this proves (i), and the first half of (ii).

Now
\begin{equation*}
 X_{W} = \{ U \in S_V \mid U \cap \Phi (U) \cap \ldots \cap \Phi^i (U) = W \} \, ,
\end{equation*}
and hence its closure is
\begin{equation*}
 \overline{{X}_{W}} = \{ U \in S_V \mid W \subset U \} \, ,
\end{equation*}
which can be identified with $S_{W^\vee/W}$ by associating to $U$ its image in $W^\vee/W$.
\end{proof}
The variety $S$ is singular, comp. Remark \ref{Ssing}. The next proposition constructs a resolution of singularities
in the weak sense.
\begin{proposition}
 Consider the morphism $f: G/B \lra G/P$, which associates to a complete isotropic flag
$\CF= \big((0) = F_0  \subset F_{1} \subset \ldots \subset F_m\big)$ its last member
$F_m$. This morphism induces a surjective proper morphism 
\begin{equation*}
 \pi : \overline{X_B(w)} \lra S\, ,
\end{equation*}
which induces an isomorphism between $X_B(w)$ and the $m$-th stratum $S_m = X_{P_m} (w_m)$ in the stratification
of $S$ in Proposition \ref{decomp}.
\end{proposition}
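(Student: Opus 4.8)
The plan is to deduce the statement from Proposition~\ref{decomp} together with the properness of the flag bundle $f\colon G/B\to G/P$. Since $\overline{X_B(w)}$ is a closed subvariety of the projective variety $G/B$, it is proper, so the restriction of $f$ to it is automatically a proper morphism with closed image. To see that this image is contained in $S$, I would invoke the description of $\overline{X_B(w)}$ recalled just before \eqref{SV}: any $\CF=(F_0\subset\cdots\subset F_m)$ in $\overline{X_B(w)}$ satisfies $\dim\bigl(F_{m-i}\cap\Phi(F_{m-i})\bigr)\geq m-i-1$ for $0\leq i\leq m-1$, and the case $i=0$ says precisely that $F_m$ lies in $S$ as defined in \eqref{SV}. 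Hence $f$ restricts to a proper morphism $\pi\colon\overline{X_B(w)}\to S$.

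For surjectivity I would note that $\pi(\overline{X_B(w)})$, being the image of a proper morphism, is closed in $S$, and that it contains $\pi(X_B(w))=f(X_B(w))$. By the proof of Proposition~\ref{decomp}, the latter is the top stratum $S_m=X_{P_m}(w_m)$, which is a nonempty open — hence dense — subvariety of the irreducible $m$-dimensional variety $S$. A closed subset containing a dense subset equals the whole space, so $\pi$ is surjective.

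For the isomorphism assertion, the point is that $f$ is exactly the composite of the forgetful morphisms $G/P_j\to G/P_{j-1}$ ($j=m-1,\dots,1$) used in the proof of Proposition~\ref{decomp}: each of them deletes the smallest member of the flag, so their composition sends $(F_0\subset\cdots\subset F_m)$ to $F_m$. That proof shows that each such morphism restricts to an isomorphism $X_{P_j}(w_j)\uplus X_{P_j}(w_{j+1})\xrightarrow{\ \sim\ }X_{P_{j-1}}(w_j)$ carrying the open piece $X_{P_j}(w_{j+1})$ isomorphically onto an open subvariety. Following the open piece $X_{P_m}(w_m)=X_B(w)$ down this tower of isomorphisms shows that $f$ restricts to an isomorphism of $X_B(w)$ onto $S_m$, which is the claim.

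I do not expect any genuinely hard point here; the only step needing care is the identification of $f$ with the composite of the projections from Proposition~\ref{decomp} and the bookkeeping of which stratum maps isomorphically onto which. If one wished to avoid Proposition~\ref{decomp} in the surjectivity argument, one would instead have to exhibit, for each $U\in S$, an explicit complete isotropic flag in $\overline{X_B(w)}$ with last term $U$ — for instance by iterating $F'\mapsto F'\cap\Phi(F')$ starting from $U$ and inserting an arbitrary hyperplane whenever this intersection fails to drop dimension — and then verify the closure inequalities by hand; that computational route is the only place where real work would be required.
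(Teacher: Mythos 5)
Your argument is correct and follows essentially the same route as the paper: you use the closure description of $\overline{X_B(w)}$ (for $i=0$) to see the image lands in $S$, properness from projectivity of source and target, surjectivity from the closedness of the image combined with density of the top stratum, and Proposition~\ref{decomp} for the isomorphism $X_B(w)\cong S_m$. Your treatment of the isomorphism step — explicitly chasing the open piece through the tower of projections $G/P_j\to G/P_{j-1}$ — is slightly more detailed than the paper's one-line appeal to \eqref{descrDL} and the characterization of $S_m$, but it is the same underlying argument, since that characterization of $S_m$ is itself established via the inductive construction in the proof of Proposition~\ref{decomp}.
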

\begin{proof}
 For $\CF$ in $\overline{X_B(w)}$ it is clear that $F_m \cap \Phi (F_m)$ has  dimension  at least $m - 1$,
i.e., $f$ indeed induces the morphism $\pi$. Now the stratum $S_m$ consists precisely of the Lagrangian 
subspaces $U$ such that
\begin{equation*}
 {\rm dim}\, \big(U \cap \Phi (U) \cap \ldots \cap \Phi^i (U)\big) = m - i \, , \,\,\, 0 \leq i \leq m \, .
\end{equation*}
That $\pi$ induces an isomorphism between $X_B(w)$ and $S_m$ therefore follows from the description of 
$X_B(w)$ in \eqref{descrDL}. The properness of $\pi$ follows from the fact that $\overline{X_B(w)}$ and $S$
are projective varieties, the surjectivity follows because $S_m$ is dense in $S$.
\end{proof}
\begin{remark}
 We note that $\pi$ is (outside the case $m = 1$) only a {\it weak resolution} of singularities, since it is not
an isomorphism above the smooth locus $X_P(s_m)$ of $S$. More precisely, denote by
\begin{equation*}
 D_i = \overline{X_B(s_1 \cdot \, \ldots \, \cdot \widehat{s_i} \cdot \, \ldots \, \cdot s_m)} \quad , \quad i = 1, \ldots, m
\end{equation*}
the $m$ divisors at the boundary of $\overline{X_B(w)}$. Then the inverse image of the $i$-dimensional stratum
$S_i = X_{P_i}(w_i)$ of $S$ is equal to $D_{m-i} \setminus D_{m-i+1}$, for $i = 0, \ldots, m-1$. In particular,
the inverse image of the singular locus $S_0 = X_{P_0} (w_0)$ is the divisor $D_m$. Indeed, for $w' \leq w$, let $\ell$
be the maximal index such that $s_{\ell}$ does not occur in $w'$. It is easily seen that the stratum $X_B(w')$ of 
$\overline{X_B(w)}$ is mapped to $S_{m-\ell}$, which proves the claim.

We note that the morphism $\pi : \overline{X_B(w)} \lra S$ is analogous to the Demazure resolution of
Schubert varieties, resp. of closures of Deligne-Lusztig varieties.
\end{remark}

From now on, we denote by the same symbol $S_V$ the $\BF$-scheme obtained by extension of scalars from the variety considered above.

\section{Bruhat-Tits-strata}\label{section.BTstrata}
In this section we associate to each vertex lattice $\Lambda \subseteq C$ a closed subscheme $\CN_{\Lambda}$ of $\CN$ with $\CN_{\Lambda}(\BF)=\CV_{\Lambda}(\BF)$. The construction imitates that of section  4 of \cite{VW}.

\noindent If   $\Lambda \subset C$ is a lattice and $k$ is a perfect field containing $\BF$ we write $\Lambda_k$ for $\Lambda \otimes_{\BZ_p} W(k)$. Denote by $W(k)_{\BQ}$ the fraction field of $W(k)$. Then we extend the hermitian form $(\, , \, )$ on $C$ to a sesqui-linear form on $C\otimes_{\BQ_p} W(k)_{\BQ}$ setting $(v\otimes a, w\otimes b)=ab^\sigma(v,w)$.
Similarly we extend the alternating form $\langle\, ,\,  \rangle$ linearly to an alternating form on $C\otimes_{\BQ_p} W(k)_{\BQ}$ and will call this form also $\langle\,,\, \rangle$.
 We denote by $M^{\sharp}$ the dual of a lattice $M$ in $C\otimes_{\BQ_p} W(k)_{\BQ}$ with respect to $(\, ,\, )$ and by $M^{\vee}$ the dual with respect to $\langle\, ,\, \rangle$. We use both notations simultaneously, even though $M^\vee=M^\sharp$, as pointed out before Proposition \ref{moduldescr}. 
We fix a vertex lattice  $\Lambda \subseteq C$  of type $t(\Lambda)=2d$. Let $\Lambda^+=\Lambda_{{\BF}}$ and  $\Lambda^-=(\Lambda^{\sharp})_{{\BF}}\supseteq \pi\Lambda_{{\BF}}$.

\begin{lemma} \label{dieud}
Let $k$ be an algebraically closed field of characteristic $p$.
Let $M\subseteq \Lambda_k$ be a $W(k)$-lattice such that $M^{\vee}\subseteq M$. Then $M$ and $M^{\vee}$ are stable under $F, V$ and $\Pi$.
\end{lemma}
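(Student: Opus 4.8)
The plan is to exploit the relations $\langle Fx,y\rangle=\langle x,Vy\rangle^\sigma$ and $\langle\Pi x,y\rangle=-\langle x,\Pi y\rangle$ (equivalently $FV=VF=p$, $\Pi^2=\pi_0=\epsilon p$) together with the inclusion $M^\vee\subseteq M\subseteq\Lambda_k$ and the fact that $\Lambda$ is a vertex lattice, i.e. $\pi\Lambda\subseteq\Lambda^\sharp\subseteq\Lambda$. The key observation is that $\Lambda_k$ itself is stable under $\Pi$ (since $\pi\Lambda\subseteq\Lambda$), hence so is its dual $\Lambda_k^\sharp=(\Lambda^\sharp)_k$, and the chain $\Pi\Lambda_k\subseteq\Lambda_k^\sharp\subseteq\Lambda_k$ holds after base change. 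So the problem reduces to showing $F,V$ preserve $M$ (the $\Pi$-stability of $M$, and then of $M^\vee$, will come almost for free once $F,V$-stability is in hand, using $\Pi M\subseteq\Pi\Lambda_k\subseteq\Lambda_k^\sharp=(\Lambda_k)^\vee\subseteq M^\vee\subseteq M$ — wait, one must check $\Pi M\subseteq M$ directly, but $\Pi M\subseteq\Pi\Lambda_k\subseteq\Lambda_k$ only gives containment in $\Lambda_k$, so the real content is pushing $\Pi M$ back inside $M$, which I address below).

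First I would record the lattice chain. Since $M^\vee\subseteq M\subseteq\Lambda_k$, dualizing reverses inclusions: $\Lambda_k^\vee=\Lambda_k^\sharp=(\Lambda^\sharp)_k\subseteq M^\vee\subseteq M\subseteq\Lambda_k$. Because $\Lambda$ is a vertex lattice, $\pi\Lambda_k\subseteq\Lambda_k^\sharp$, so $\Pi\Lambda_k\subseteq(\Lambda^\sharp)_k\subseteq M^\vee\subseteq M$; in particular $\Pi M\subseteq\Pi\Lambda_k\subseteq M$, which already gives $\Pi$-stability of $M$, and dually $\Pi M^\vee\subseteq M^\vee$ (apply the same to the vertex lattice, using that $\Pi$ is skew-adjoint so $(\Pi M)^\sharp=\Pi^{-1}M^\sharp$, whence $\Pi M^\vee=\Pi M^\sharp=\Pi^{-1}\cdot\pi_0 M^\sharp\supseteq$ … — more cleanly: $M^\vee\subseteq M$ and $\Pi$ skew-adjoint give $\Pi M^\vee\subseteq(\Pi^{-1}M)^\vee$; since $\Pi^{-1}M\supseteq\Pi^{-1}M^\vee\supseteq M^\vee$ one gets … I would instead just argue $\Pi M^\vee\subseteq\Pi M\subseteq M$ and $\Pi M^\vee\subseteq\Pi\Lambda_k\subseteq M^\vee$ after noting $\Pi\Lambda_k\subseteq\Lambda_k^\sharp=\Lambda_k^\vee\subseteq M^\vee$).

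The substance is $F,V$-stability. Here I would use that on $C$ we have $F=p\,\pi^{-1}\eta^{-1}$ and $V=\pi\eta^{-1}$ (from $\tau=\eta\Pi V^{-1}$ with $\tau=\mathrm{id}_C\otimes\sigma$ and the computation in Section 2), with $\eta\in W^\times$. Thus on $C\otimes W(k)_\BQ$, up to the unit $\eta$ and $\sigma$-twists, $V$ acts like $\Pi$ and $F$ like $p\Pi^{-1}=\Pi\cdot(\Pi^{-2}p)=\Pi\cdot\epsilon^{-1}$, again a unit times $\Pi$. Concretely: $VM\subseteq\eta^{-1}\Pi^{\sigma}M^{(\sigma)}$ — I must be careful with semilinearity — so $VM$ and $FM$ are, up to a $\sigma$-semilinear twist and a unit, equal to $\Pi$ applied to the $\sigma$-conjugate lattice $M^{(\sigma)}$. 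Now $M^{(\sigma)}\subseteq\Lambda_k^{(\sigma)}=\Lambda_k$ (as $\Lambda$ is defined over $\BZ_p$!), so $\Pi M^{(\sigma)}\subseteq\Pi\Lambda_k\subseteq\Lambda_k^\sharp=\Lambda_k^\vee$. But $\Lambda_k$ is $\sigma$-stable, hence so is the pairing, and $M^\vee$ is $\sigma$-conjugate-compatible in the sense that $(M^{(\sigma)})^\vee=(M^\vee)^{(\sigma)}$; from $M^\vee\subseteq M$ we get $M^{(\sigma)\vee}=(M^\vee)^{(\sigma)}\subseteq M^{(\sigma)}$, and more importantly $\Lambda_k^\vee=\Lambda_k^\sharp\subseteq M^\vee\subseteq M$ and also $\subseteq M^{(\sigma)}$ after twisting — so $\Pi M^{(\sigma)}\subseteq\Lambda_k^\sharp\subseteq M$, and unwinding the unit and the twist gives $VM\subseteq M$, $FM\subseteq M$. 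Applying the same reasoning to $M^\vee$ (which also satisfies $(M^\vee)^\vee=M\supseteq M^\vee$ and sits in $\Lambda_k$) yields $F,V$-stability of $M^\vee$.

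The main obstacle I anticipate is bookkeeping the $\sigma$-semilinearity of $F$ and $V$ correctly against the $\BZ_p$-rationality of $\Lambda$: one needs that $\Lambda_k$ and its dual lattice chain are genuinely $\sigma$-stable so that the $\sigma$-conjugate lattices $M^{(\sigma)}$, $(M^\vee)^{(\sigma)}$ still sit between $\Lambda_k^\sharp$ and $\Lambda_k$, and one must track the unit $\eta$ (and $\epsilon$) through these manipulations — none of it is deep, but it is where a sign or twist error would hide. Once the identities $F\equiv(\text{unit})\cdot\Pi^{-1}p$, $V\equiv(\text{unit})\cdot\Pi$ on $C$ are written down and combined with $\Pi\Lambda_k\subseteq\Lambda_k^\sharp\subseteq M^\vee\subseteq M\subseteq\Lambda_k$, the four stability statements ($F,V,\Pi$ on $M$ and on $M^\vee$) all drop out.
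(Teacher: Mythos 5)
Your proof is correct and its core is the same as the paper's: express $F$ and $V$ as unit multiples of $\Pi\tau^{\pm 1}$, use $\tau\Lambda_k=\Lambda_k$ and $\Pi\Lambda_k\subseteq\Lambda_k^\sharp$, and plug into the chain $\Lambda_k^\sharp\subseteq M^\vee\subseteq M\subseteq\Lambda_k$. The one genuine variation is how you handle $M^\vee$. The paper proves stability of $M$ first and then uses the adjunction $\langle Fx,y\rangle=\langle x,Vy\rangle^\sigma$ to push to $M^\vee$; you instead observe that the very same chain already yields $F\Lambda_k,V\Lambda_k,\Pi\Lambda_k\subseteq\Lambda_k^\sharp\subseteq M^\vee$, which handles $M$ and $M^\vee$ uniformly and is arguably cleaner, since the hypothesis $M^\vee\subseteq M$ is not symmetric and you cannot literally reapply the statement to $M^\vee$. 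One small slip: from $F=p\pi^{-1}\eta^{-1}$ on $C$ and $FV=p$, one gets $V=\eta\pi$, not $\pi\eta^{-1}$; this is immaterial since $\eta\in W^\times$, but worth fixing. The hesitations in the middle of your write-up (the aborted $(\Pi M)^\sharp=\Pi^{-1}M^\sharp$ computation) should be pruned; the clean statement is simply $\Pi M^\vee\subseteq\Pi\Lambda_k\subseteq\Lambda_k^\sharp\subseteq M^\vee$.
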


\begin{proof} We have 
$FM=pV^{-1}M=\Pi^2V^{-1}M=\Pi\tau M \subseteq \Pi\tau \Lambda_k=\Pi \Lambda_k \subseteq \Lambda_k^{\vee} \subseteq M^{\vee}\subseteq M$. 
Further, $VM=\tau^{-1}\Pi M= \Pi \tau^{-1} M \subseteq \Pi \tau^{-1} \Lambda_k=\Pi \Lambda_k \subseteq M$ and $\Pi M \subseteq \Pi \Lambda_k \subseteq M$. This confirms the claim for  $M$.

For $x\in M^{\vee}$ and any $y \in M$ we have $\langle Fx,y\rangle =\langle x,Vy\rangle ^{\sigma}$ which is integral. Thus $Fx\in M^{\vee}$. Similarly we see that $M^{\vee}$ is stable under $V$ and $\Pi$.
\end{proof}

By the lemma, both  $\Lambda^+$ and $\Lambda^-$ are stable under $V,F$ and $\Pi$. 
In particular, we obtain $p$-divisible $\BZ_p[\pi]$-modules  $X_{\Lambda^+}$ and $X_{\Lambda^-}$ over $\BF$ together with quasi-isogenies $\rho_{\Lambda^+}:X_{\Lambda^+}\rightarrow \BX$ and $\rho_{\Lambda^-}:X_{\Lambda^-}\rightarrow \BX$. Since $\Lambda^-=\Lambda^{+ \sharp}=\Lambda^{+ \vee}$ (compare section \ref{section.Themodulispace}) and hence $X^{\vee}_{\Lambda^+}\cong X_{\Lambda^-}$, we obtain an isomorphism $\lambda: X_{\Lambda^+}\rightarrow X_{\Lambda^-}$ such that the following diagram commutes,

\begin{equation}\label{polcomm}
\begin{aligned}
\xymatrix{ {X_{\Lambda^+}} \ar[d]_{\rho_{\Lambda^+}} \ar[r]^{\lambda}&  {X_{\Lambda^-}}&
 \\ \BX\ar[r]_{\lambda_{\BX}} & \BX^\vee \ar[u]_{^t\rho_{\Lambda^-}} \, .}
 \end{aligned}
\end{equation}

Let $\widetilde{\CN}_{\Lambda}$ be the subfunctor of $\bar\CN^0$ consisting of those tuples $(X,\iota,\lambda,\rho)$ over an $\BF$-scheme $S$ such that $\rho_{\Lambda^+,X}:=(\rho_{\Lambda^+})_S^{-1}\circ \rho$ is an isogeny or, equivalently, such that 
$\rho_{\Lambda^-,X}:=\rho^{-1}\circ(\rho_{\Lambda^-})_S$ is an isogeny. Note that by the commuting diagram (\ref{polcomm}), $\rho_{\Lambda^-,X}$ and $\rho_{\Lambda^+,X}$ both have height $t(\Lambda)/2=d$.

\begin{lemma}\label{rep}
The functor  $\widetilde{\CN}_{\Lambda}$ is representable by a projective $\BF$-scheme and the functor monomorphism $\widetilde{\CN}_{\Lambda}\hookrightarrow\bar\CN^0$ is  a closed immersion.
\end{lemma}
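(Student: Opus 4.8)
The plan is to follow the strategy of \cite{VW}, \S 4, adapting it to our ramified situation. The key point is that $\widetilde{\CN}_\Lambda$ classifies triples $(X,\iota,\lambda,\rho)$ that sit, via $\rho_{\Lambda^+,X}$, between two \emph{fixed} $p$-divisible groups: an isogeny $X_{\Lambda^+}\to X$ of height $d$ whose composite with $\rho$ is $\rho_{\Lambda^+}$. Equivalently, on Dieudonn\'e modules over an $\BF$-scheme $S$, the module of $X$ is a subcrystal $M$ with $\Lambda^+\otimes\CO_S \subseteq M \subseteq \Lambda^-\otimes\CO_S$ (using $\Lambda^-=\Pi\Lambda^{+,\vee}$ up to the identifications of Lemma \ref{dieud}, so that in fact $\Pi \Lambda^+ \subseteq \Lambda^- \subseteq \Lambda^+$ and all the relevant lattices are squeezed into a bounded range). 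Since $\Lambda^+/\Lambda^-$ is a finite-dimensional $\BF$-vector space of dimension $2d$, the locus $M$ one is allowed to take is, after applying Grothendieck--Messing / the theory of displays, parametrized by a locally closed subscheme of a product of Grassmannians of the finite-dimensional space $\Lambda^+/\Lambda^-$, cut out by the closed conditions coming from stability under $F$, $V$, $\Pi$, self-duality (the quasi-polarization condition), and the Kottwitz and Pappas conditions \eqref{wedcond}. Hence the first step is to produce an explicit closed embedding of $\widetilde{\CN}_\Lambda$ into a projective scheme over $\BF$ (a flag-type variety built from $\Lambda^+/\Lambda^-$).

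Concretely, I would argue as follows. First, observe that $\widetilde{\CN}_\Lambda$ is a locally closed subfunctor of $\bar\CN^0$: the condition ``$\rho_{\Lambda^+,X}$ is an isogeny'' is a closed condition (it says a certain quasi-isogeny lands in the honest isogenies, i.e.\ $M\subseteq \Lambda^+\otimes\CO_S$), and simultaneously ``$\rho_{\Lambda^-,X}$ is an isogeny'' is the dual closed condition ($\Lambda^-\otimes\CO_S\subseteq M$); together, using that both isogenies automatically have height $d$ by the commuting diagram \eqref{polcomm}, they pin $M$ into the finite range between $\Lambda^-\otimes\CO_S$ and $\Lambda^+\otimes\CO_S$. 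So $\widetilde{\CN}_\Lambda \hookrightarrow \bar\CN^0$ is a monomorphism which is representable by a closed immersion once we know $\widetilde{\CN}_\Lambda$ is representable. Second, for representability and projectivity: over an $\BF$-scheme $S$ the quadruple $(X,\iota,\lambda,\rho)\in\widetilde{\CN}_\Lambda(S)$ is, by Dieudonn\'e theory (crystalline Dieudonn\'e, or Zink's windows since $\bar S$ already has $p=0$), equivalent to giving the subsheaf $\overline M := M/(\Lambda^-\otimes\CO_S)$ of $\Lambda^+/\Lambda^- \otimes_\BF \CO_S$ together with the submodule $VM/(\Lambda^-\otimes\CO_S)$ (equivalently the Hodge filtration), subject to: $\overline M$ is a locally free $\CO_S$-direct summand of the appropriate rank, stability under the $\sigma$-linear $\tau$ (equivalently $F,V$) and the linear $\Pi$, the self-duality $M^\vee=M$, and the wedge conditions of \eqref{wedcond}. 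All of these define a closed subscheme $Z$ of a finite product of Grassmannians of the finite-dimensional $\BF$-vector space $\Lambda^+/\Lambda^-$; such a product is projective over $\BF$, hence so is $Z$. One checks $\widetilde{\CN}_\Lambda$ is represented by $Z$ (or a suitable closed subscheme of it carrying the universal display). Projectivity follows.

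The main obstacle I anticipate is bookkeeping rather than conceptual: making precise the equivalence ``point of $\widetilde{\CN}_\Lambda$ over $S$ $\leftrightarrow$ point of the Grassmannian-type scheme $Z$'' in families, i.e.\ checking that the Dieudonn\'e-module description of Proposition \ref{pointdescr}/\ref{moduldescr}, which was stated pointwise over $\BF$, upgrades to an isomorphism of functors on all of $\mathrm{Nilp}$-schemes of characteristic $p$. This requires invoking the right representability input from \cite{RZ} (the functor $\CN$, hence $\bar\CN^0$, is already known to be representable by a formal scheme locally formally of finite type, so $\widetilde\CN_\Lambda$ being a closed subfunctor of $\bar\CN^0$ is automatically representable by a closed formal subscheme; one then only needs \emph{properness}, equivalently that it is covered by the above bounded Grassmannian data, to conclude it is a genuine projective $\BF$-scheme). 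The cleanest route, which is the one I would take and is exactly the one in \cite{VW}, is: $\widetilde{\CN}_\Lambda$ is closed in $\bar\CN^0$ hence locally formally of finite type and separated over $\BF$; the valuative criterion (or boundedness of the lattices $M$ between $\Lambda^-\otimes\CO_S$ and $\Lambda^+\otimes\CO_S$) shows it is proper; and a proper scheme locally formally of finite type over a field which is a closed subscheme of a (formal) scheme admitting such a bounded lattice description is in fact a scheme, projective because the lattice data embed it in a product of Grassmannians. I would write the proof in that order: (1) closed immersion into $\bar\CN^0$; (2) boundedness of the Dieudonn\'e lattices, giving properness; (3) the Grassmannian embedding, giving projectivity and scheme-ness.
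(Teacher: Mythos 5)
Your proposal is essentially correct and matches the paper's own (one‑line) proof, which simply refers to Lemma 4.2 of \cite{VW}; your three‑step scheme --- (1) closed immersion into $\bar\CN^0$, (2) boundedness of the Dieudonn\'e lattices, (3) Grassmannian embedding giving scheme‑ness and projectivity --- is exactly the content of that reference. Two small points worth tightening: the key input that the locus where a quasi‑isogeny of $p$‑divisible groups is an isogeny is a \emph{closed} subscheme should be attributed to \cite{RZ}, Prop.\ 2.9 (you state the fact but not the source), and the phrase ``$M\subseteq \Lambda^+\otimes\CO_S$'' over a general base $S$ should really be formulated in terms of the display/window over $W(\CO_S)$ rather than $\CO_S$ itself, as you yourself indicate later in the paragraph --- but neither affects the correctness of the argument.
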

\begin{proof}
This is proved in the 
same way as Lemma 4.2 in \cite{VW}.
\end{proof}






Recall from Proposition \ref{moduldescr} that by associating to a point $x\in \bar\CN^0(k)$ (where $k$ is an algebraically closed field of characteristic $p$) its Dieudonn\'e module, the set $\bar\CN^0(k)$ is in bijection with the set of $W(k)$-lattices 
$$\{ \ M\subseteq N_k :=N\otimes_{W(\BF)} W(k)  \ | \ FM\subseteq M, \ VM\subseteq M,\  \Pi M\subseteq M,\  M=M^{\sharp},  \ [M+\tau M:M] \leq 1 \}.$$

\begin{corollary}
For $k$ and $\Lambda$ as above, the set  $\tilde\CN_{\Lambda} (k)$ is in natural bijection with the set of $W(k)$-lattices
$$\{ \ M\subseteq \Lambda_k   \ | \  M=M^{\vee},  \ [M+\tau M:M] \leq 1 \}.$$
\end{corollary}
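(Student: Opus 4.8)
The plan is to combine the bijection of Proposition \ref{moduldescr} (in its $k$-point form recalled just above) with the characterization of $\tilde\CN_\Lambda$ as the locus where $\rho_{\Lambda^+,X}$ is an isogeny, and then to reduce the list of lattice conditions using Lemma \ref{dieud}. Concretely, given $x = (X,\iota,\lambda,\rho) \in \tilde\CN_\Lambda(k)$ with Dieudonn\'e module $M \subseteq N_k$, the condition that $\rho_{\Lambda^+,X} = (\rho_{\Lambda^+})_S^{-1}\circ\rho$ is an isogeny translates exactly into $M \subseteq \Lambda_k$: indeed $\rho_{\Lambda^+}$ identifies the Dieudonn\'e module of $X_{\Lambda^+}$ with $\Lambda_k$ inside $N_k$, and an isogeny between $p$-divisible groups corresponds on Dieudonn\'e modules to an inclusion $M \subseteq \Lambda_k$. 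Conversely, any $M \subseteq \Lambda_k$ appearing in the list of Proposition \ref{moduldescr} gives a point of $\bar\CN^0(k)$ for which this inclusion of Dieudonn\'e modules defines the isogeny $\rho_{\Lambda^+,X}$, hence a point of $\tilde\CN_\Lambda(k)$.

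Thus I would first fix the bijection $\bar\CN^0(k) \leftrightarrow \{M \subseteq N_k : FM \subseteq M,\ VM\subseteq M,\ \Pi M\subseteq M,\ M = M^\sharp,\ [M+\tau M : M]\le 1\}$ from the corollary's preamble, and observe that under it $\tilde\CN_\Lambda(k)$ is precisely the subset of those $M$ with $M \subseteq \Lambda_k$. It then remains to check that, for a $W(k)$-lattice $M \subseteq \Lambda_k$, the conditions "$M = M^\sharp$" together with "$M = M^\vee$" are interchangeable, and that once $M = M^\vee \subseteq M$ (trivially, as equality) holds the three stability conditions $FM, VM, \Pi M \subseteq M$ are automatic. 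The identity $M^\vee = M^\sharp$ for any $\CO_{\breve E}$-lattice was already noted before Proposition \ref{moduldescr} and holds verbatim over $W(k)$ by the same argument using \eqref{spur}, so $M = M^\sharp \Leftrightarrow M = M^\vee$. For the stability conditions: since $M \subseteq \Lambda_k$ and $M = M^\vee$, in particular $M^\vee \subseteq M$, so Lemma \ref{dieud} applies directly and gives that $M$ is stable under $F$, $V$ and $\Pi$. Hence those three conditions are redundant in the presence of $M \subseteq \Lambda_k$ and $M = M^\vee$, and the list collapses to $\{M \subseteq \Lambda_k : M = M^\vee,\ [M+\tau M:M] \le 1\}$, as claimed.

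The only genuinely substantive point is the dictionary "$\rho_{\Lambda^+,X}$ is an isogeny $\Leftrightarrow$ $M \subseteq \Lambda_k$", and more precisely its functorial (not just pointwise) formulation; but for $k$-points this is the standard Dieudonn\'e-theoretic fact that isogenies of $p$-divisible groups correspond to inclusions of Dieudonn\'e lattices, applied to $\rho_{\Lambda^+}$ which by construction carries the Dieudonn\'e module of $X_{\Lambda^+}$ onto $\Lambda_k \subseteq N_k$. Everything else is bookkeeping: matching the selfduality conditions via $M^\vee = M^\sharp$ and discarding the $F,V,\Pi$-stability conditions via Lemma \ref{dieud}. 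I do not expect any real obstacle here — the statement is essentially a reformulation of Proposition \ref{moduldescr} intersected with the defining condition of $\tilde\CN_\Lambda$, streamlined using the lemma just proved; it is the exact analogue of the corresponding step in \cite{VW} (their Lemma 4.2 and its corollary), and the proof can be given in a few lines.
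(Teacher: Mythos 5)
Your proof is correct and is essentially the paper's argument: the paper's own proof is just the one-liner ``This follows from Lemma \ref{dieud}'', and your write-up spells out exactly the intended chain of reasoning — identify $\tilde\CN_\Lambda(k)$ inside the $k$-point description from Proposition \ref{moduldescr} via the condition $M\subseteq\Lambda_k$, note $M^\vee=M^\sharp$, and use Lemma \ref{dieud} to discard the $F$-, $V$-, $\Pi$-stability conditions.
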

\begin{proof}
This follows from Lemma \ref{dieud}.
\end{proof}

Let $V=\Lambda/\Lambda^{\sharp}=\Lambda/\Lambda^{\vee}$ (hence $V$ is no longer the {\it Verschiebung}!). It is a $\BF_p$-vector space of dimension $2d$. Then $V_k:=V\otimes k=\Lambda_k/\Lambda_k^{\vee}$. We define forms $\langle\, ,\, \rangle_V$ on $V$ resp. $V_k$ as follows: For $\bar x, \bar y\in V$ with lifts $x,y\in \Lambda$ let $\langle x, y\rangle_V $ be the image of $p\langle x,y\rangle$ in $\BF_p$. Extend this form $k$-linearly to $V_k$. Note that $\tau$ induces the identity on $V$, and the  Frobenius on $V_k$. 
\begin{lemma}
The form $\langle \, , \, \rangle_V $ is a non-degenerate alternating form.
\end{lemma}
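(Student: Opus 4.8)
The plan is to verify in turn that $\langle\,,\,\rangle_V$ is well defined on $V$, that it is alternating, and that it is non-degenerate; only the last point uses the vertex-lattice condition in an essential way.

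For well-definedness I would argue as follows. Recall that for $\CO_E$-lattices in $C$ the duals with respect to $\langle\,,\,\rangle$ and to $(\,,\,)$ agree, so $\Lambda^\sharp=\Lambda^\vee=\{x\in C\mid \langle x,\Lambda\rangle\subseteq\BZ_p\}$. Since $\pi\Lambda\subseteq\Lambda^\sharp$ and $\pi^2=\pi_0\in p\BZ_p^\times$, we get $\Lambda\subseteq\pi^{-1}\Lambda\subseteq p^{-1}\Lambda^\sharp=p^{-1}\Lambda^\vee$, hence $\langle x,y\rangle\in p^{-1}\BZ_p$ for all $x,y\in\Lambda$, and $p\langle x,y\rangle$ therefore has a well-defined image in $\BF_p$ (here $p\neq 2$ is used so that the factor $\tfrac12$ in \eqref{spur} is harmless). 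If in addition $x\in\Lambda^\sharp$ (or $y\in\Lambda^\sharp$), then $\langle x,y\rangle\in\BZ_p$ by definition of $\Lambda^\vee$, so $p\langle x,y\rangle$ reduces to $0$; hence the pairing descends to $V=\Lambda/\Lambda^\sharp$, and it is $\BF_p$-bilinear by construction. Alternatingness is then immediate, since $\langle x,x\rangle=0$ for every $x\in C$.

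The heart of the matter is non-degeneracy. Let $\bar x\in V$ lie in the radical and lift it to $x\in\Lambda$; the hypothesis says $\langle x,y\rangle\in\BZ_p$ for all $y\in\Lambda$, and I must deduce $x\in\Lambda^\sharp$, i.e.\ $(x,y)\in\CO_E$ for all $y\in\Lambda$. Using the identity $(x,y)=\langle\Pi x,y\rangle+\langle x,y\rangle\,\pi$ — in which the two terms are exactly the $\BZ_p$- and $\BZ_p\pi$-coordinates of $(x,y)$ in $\CO_E=\BZ_p\oplus\BZ_p\pi$ — this reduces to showing $\langle\Pi x,y\rangle\in\BZ_p$. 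Here one invokes the skew relation $\langle\Pi x,y\rangle=-\langle x,\Pi y\rangle$ valid on $C$, together with $\Pi y\in\pi\Lambda\subseteq\Lambda^\sharp\subseteq\Lambda$: the right-hand side equals $-\langle x,\Pi y\rangle$ with $\Pi y\in\Lambda$, which is in $\BZ_p$ by the assumption on $x$. Hence $(x,y)\in\CO_E$ for all $y\in\Lambda$, so $x\in\Lambda^\sharp$ and $\bar x=0$. Non-degeneracy over $\BF_p$ gives non-degeneracy of the $k$-linear extension to $V_k=V\otimes k$ by base change.

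I do not expect a serious obstacle here. The only care needed is in tracking normalizations (the factor $\tfrac12$ and $\pi^2=\pi_0$), but since $p$ is odd these are harmless; the essential structural inputs are the containment $\pi\Lambda\subseteq\Lambda^\sharp$ from the definition of a vertex lattice, the coincidence $\Lambda^\vee=\Lambda^\sharp$, and the $\Pi$-skew-symmetry of $\langle\,,\,\rangle$ on $C$.
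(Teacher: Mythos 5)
Your proof is correct and follows the same logical skeleton as the paper's: pass from $\bar x$ in the radical to $\langle x,\Lambda\rangle\subseteq\BZ_p$, i.e.\ $x\in\Lambda^\vee$, and then conclude $x\in\Lambda^\sharp$. The only real difference is that where the paper simply invokes the previously established identity $\Lambda^\vee_k=\Lambda^\sharp_k$ (stated before Proposition \ref{moduldescr} and recalled at the start of Section \ref{section.BTstrata}), you re-derive that implication inline via the formula $(x,y)=\langle\Pi x,y\rangle+\langle x,y\rangle\pi$ and the $\Pi$-skew-symmetry of $\langle\,,\,\rangle$ — a correct, slightly more self-contained variant (and note that $\Pi y\in\Lambda$ already follows from $\Lambda$ being an $\CO_E$-lattice, so the full chain $\pi\Lambda\subseteq\Lambda^\sharp\subseteq\Lambda$ is not needed there). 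Your extra care over well-definedness is also fine, though the paper leaves it implicit; overall this is essentially the same argument.
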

\begin{proof}
It is clear that $\langle\ ,\ \rangle_V$ is alternating. To see that it is non-degenerate, let $\bar x\in V_k$  and suppose that $\langle \bar x, \bar y\rangle_V=0$ for any $\bar y\in V_k$. It follows that for any lift $x\in \Lambda_k$ of $\bar x$ and any $y\in \Lambda_k$ we have $p\langle x,y\rangle \in (p)$. Hence $x\in \Lambda_k^{\vee}= \Lambda_k^{\sharp}$, hence $\bar x=0$.
\end{proof}

\begin{lemma}\label{kwert}
The map $f: M\mapsto M/\Lambda^{\sharp}_k$ induces a bijection between the set $\CV_{\Lambda}(k)$ and the set  $S_V(k)$.
\end{lemma}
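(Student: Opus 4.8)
The plan is to show that the map $f: M \mapsto M/\Lambda^\sharp_k$ is well-defined, injective, and surjective when restricted to $\CV_\Lambda(k)$, matching up the self-duality and length conditions on both sides. First I would recall that $\CV_\Lambda(k)$ consists of the $W(k)$-lattices $M$ with $M = M^\vee$ and $[M + \tau M : M] \leq 1$ that are contained in $\Lambda_k$; by the Corollary above (consequence of Lemma \ref{dieud}), such $M$ automatically satisfy $\Lambda^\sharp_k = \Lambda^\vee_k \subseteq M \subseteq \Lambda_k$, since $M = M^\vee$ and $M \subseteq \Lambda_k$ give $\Lambda^\vee_k \subseteq M^\vee = M$. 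Hence $f(M) = M/\Lambda^\sharp_k$ is a genuine $k$-subspace of $V_k = \Lambda_k/\Lambda^\sharp_k$, and the assignment $M \mapsto M/\Lambda^\sharp_k$ sets up an inclusion-preserving bijection between $W(k)$-lattices sandwiched between $\Lambda^\sharp_k$ and $\Lambda_k$ and $k$-subspaces of $V_k$, with inverse sending a subspace $U$ to its preimage $\widetilde U$ in $\Lambda_k$. So the content is to check that the two conditions cutting out $\CV_\Lambda(k)$ on the left correspond to the two conditions cutting out $S_V(k)$ on the right, namely that $\overline M := M/\Lambda^\sharp_k$ is Lagrangian in $(V_k, \langle\,,\,\rangle_V)$ and $\dim(\overline M \cap \Phi(\overline M)) \geq m - 1$ where $m = d$.

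For the self-duality, I would compute the orthogonal complement of $\overline M$ in $V_k$ with respect to $\langle\,,\,\rangle_V$. Unwinding the definition of $\langle\,,\,\rangle_V$ (the image of $p\langle\,,\,\rangle$ in $k$), an element $\bar x$ lies in $\overline M^\perp$ iff for a lift $x \in \Lambda_k$ we have $p\langle x, M\rangle \subseteq (p)$, i.e. $\langle x, M\rangle \subseteq W(k)$, i.e. $x \in M^\vee$. Since $M = M^\vee$, this says $\overline M^\perp = M^\vee/\Lambda^\sharp_k = M/\Lambda^\sharp_k = \overline M$; here I use that $\Lambda^\sharp_k \subseteq M^\vee$, which holds because $M \subseteq \Lambda_k$ implies $\Lambda^\vee_k = \Lambda^\sharp_k \subseteq M^\vee$. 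Thus $M = M^\vee$ is equivalent to $\overline M$ being self-orthogonal, hence Lagrangian (the dimension count $\dim \overline M = d = \tfrac12 \dim V_k$ is automatic once $\overline M = \overline M^\perp$, but it can also be seen directly from $[\Lambda_k : M] = [\Lambda_k : \Lambda^\sharp_k]/2 \cdot$, i.e. from $[\Lambda_k:M] = [M^\vee:\Lambda^\sharp_k]$ and $[\Lambda_k:\Lambda^\sharp_k] = 2d$, forcing $[\Lambda_k:M] = d$). Conversely, any Lagrangian $U \subseteq V_k$ pulls back to a lattice $\widetilde U$ with $\Lambda^\sharp_k \subseteq \widetilde U \subseteq \Lambda_k$, and reversing the computation shows $\widetilde U^\vee = \widetilde U$.

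For the length condition, note that $\tau$ acts on $V_k = \Lambda_k/\Lambda^\sharp_k$ as the Frobenius $\Phi$ (as recorded just before the lemma, since $\tau$ restricts to the identity on $V = \Lambda/\Lambda^\sharp$ and is $\sigma$-semilinear). Since $\Lambda^\sharp_k$ is $\tau$-stable (because $\Lambda^\sharp$ is a lattice in $C = N^\tau$, so $\tau\Lambda^\sharp_k = \Lambda^\sharp_k$), reduction mod $\Lambda^\sharp_k$ commutes with $\tau$, giving $\overline{M + \tau M} = \overline M + \Phi(\overline M)$ inside $V_k$. Therefore $[M + \tau M : M] = \dim_k\big((\overline M + \Phi(\overline M))/\overline M\big) = \dim \overline M - \dim(\overline M \cap \Phi(\overline M))$, using that $\overline M$ and $\Phi(\overline M)$ both have dimension $d$. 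So $[M + \tau M : M] \leq 1$ is precisely $\dim(\overline M \cap \Phi(\overline M)) \geq d - 1$, which (for a Lagrangian $\overline M$) is the defining condition of $S_V(k)$ in \eqref{SV}. This establishes the bijection. The only mild subtlety — the step I would be most careful about — is the bookkeeping that a lattice $M$ satisfying $M = M^\vee$ and $M \subseteq \Lambda_k$ indeed has $\Lambda^\sharp_k \subseteq M$ (so that $f$ lands in subspaces of $V_k$ and is reversible), together with checking the index identities that pin down $\dim \overline M = d$; everything else is a direct translation via the sandwich bijection.
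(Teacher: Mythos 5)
Your proof is correct and follows essentially the same route as the paper: the paper also shows that $M=M^\vee$ translates into $\overline M=\overline M^\perp$ via the definition of $\langle\,,\,\rangle_V$ and that the length condition $[M+\tau M:M]\leq 1$ passes to the condition on $\overline M\cap\Phi(\overline M)$, checking well-definedness, injectivity, and surjectivity in turn. Your packaging is slightly more efficient in that you invoke the standard bijection between lattices sandwiched between $\Lambda^\sharp_k$ and $\Lambda_k$ and $k$-subspaces of $V_k$ up front, so the bijectivity is automatic once the defining conditions are matched, but the substance is the same.
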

\begin{proof}

Let $M\in \CV_{\Lambda}(k)  $ and let $U=M/\Lambda^{\sharp}_k$, which we regard as a subspace of $V_k$. Then, since $M=M^{\vee}$ and by the definition of the symplectic form on $V_k$, we have $U=U^{\perp}$. The dimension of $U$ is the index of  $\Lambda^{\sharp}_k$ in $M$ which is $d$.
The condition $ [M+\tau M:M] \leq 1 $ carries over to $U$, so $U$ lies indeed in $S_V(k)$. 
Next we show that $f$ is a bijection: Suppose that $M_1/\Lambda^{\sharp}_k=M_2/\Lambda^{\sharp}_k$. Then $M_1=M_2$ since $\Lambda^{\sharp}_k \subseteq M_1\cap M_2$, hence $f$ is injective.
Next, let $U\in S_V(k)$. Let $M=q^{-1}(U)$, where $q: \Lambda_k \rightarrow V_k$   is the canonical projection. We claim that $M\in \CV_{\Lambda}(k)$.  The condition $ [U+\tau U:U] \leq 1 $ carries over to $M$. We have to show that $M=M^{\vee}$. If $x,y\in M$ with images $\bar x, \bar y\in U$,  then $\langle\bar x, \bar y \rangle_V=0$, hence $p\langle x,y\rangle \equiv 0 \mod p$, hence $\langle x,y \rangle $ is integral, so $M\subseteq M^{\vee}$. Conversely, suppose that $x\in M^{\vee}$,  and let $\bar x$ be its image in $V_k$. Let $\bar y$ be any element of $U$. Then $\langle\bar x , \bar y \rangle_V=0$. Since $\bar y$ was arbitrary, $\bar x \in U^{\perp}=U$, hence $x\in M$. Thus     $M^{\vee}\subseteq M$. We conclude that $M\in \CV_{\Lambda}(k)$ is a preimage of $U$.
\end{proof}

For an $\BF$-scheme $S$ and a $p$-divisible group $X$ over $S$ denote by $D(X)$ the Lie algebra of the universal vector extension of $X$. It is a locally free $\CO_S$-module of rank equal to the height of $X$ and it is compatible with base change.

We consider the following functor $\mathfrak{Grass}(V)$ on the category of $\BF$-algebras. For   an $\BF$-algebra $R$ let $\mathfrak{Grass}(V)(R)$ be the set of  locally direct summands $U$ of $V\otimes R$ of rank $d$.  Then $\mathfrak{Grass}(V)$ is represented by an $\BF$-scheme. 

Next we define a morphism $\tilde f:\widetilde{ \CN}_{\Lambda}\rightarrow \mathfrak{Grass}(V)$.
Let $R$ be an 
$\BF$-algebra and let $(X,\iota, \lambda, \rho_X)\in \widetilde{\CN}_{\Lambda}(R)$. By the same reasoning as in \S 4.6 of \cite{VW}, $E(X):= \ker(D(\rho_{\Lambda^-, X}))$ is a direct summand of $( \Lambda^+/ \Lambda^-)\otimes_{ \BF} R=V\otimes_{\BF_p} R$. Its rank is the height of $\rho_{\Lambda^-,X}$, which is $t(\Lambda)/2=d$, as remarked above.
 
Thus we obtain a map $\widetilde\CN_{\Lambda}(R)\rightarrow \mathfrak{Grass}(V)(R)$ by sending $(X,\iota,\lambda, \rho)$ to $E(X)$. This induces the desired map $\tilde f:\widetilde{ \CN}_{\Lambda}\rightarrow \mathfrak{Grass}(V)$.
Let now ${ \CN}_{\Lambda}$ be the underlying reduced subscheme of $\widetilde{ \CN}_{\Lambda}$. We have 
canonical inclusions of functors ${ \CN}_{\Lambda}\hookrightarrow \widetilde{ \CN}_{\Lambda}$ and $S_V\hookrightarrow \mathfrak{Grass}(V)$. 
\begin{lemma}
The restriction of $\tilde f$ to ${ \CN}_{\Lambda}$ induces a map $f:{ \CN}_{\Lambda}\rightarrow S_V$. 
\end{lemma}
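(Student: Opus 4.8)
The statement asserts that the morphism $\tilde f\colon \widetilde{\CN}_\Lambda\to\mathfrak{Grass}(V)$, when restricted to the reduced subscheme $\CN_\Lambda$, factors through the closed subscheme $S_V\hookrightarrow\mathfrak{Grass}(V)$. Since $\CN_\Lambda$ is reduced and $S_V$ is a closed subscheme of $\mathfrak{Grass}(V)$, it suffices to check the factorization on $\BF$-points (more precisely, on $k$-points for every algebraically closed field $k$ of characteristic $p$), because a morphism from a reduced scheme to $\mathfrak{Grass}(V)$ factors through a closed subscheme as soon as it does so on the level of points valued in algebraically closed fields. So the plan is to reduce to a pointwise statement and then invoke the dictionary already established in the excerpt.

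First I would reduce to points: $\CN_\Lambda$ being reduced, the scheme-theoretic image of $\tilde f|_{\CN_\Lambda}$ is the reduced closure of the image of the $k$-points, so it is enough to show $\tilde f(\CN_\Lambda(k))\subseteq S_V(k)$ for all algebraically closed $k\supseteq\BF$. Next I would identify the map $\tilde f$ on $k$-points with the map $f$ of Lemma~\ref{kwert} under the Dieudonné-module dictionary. Concretely, for $(X,\iota,\lambda,\rho)\in\CN_\Lambda(k)=\widetilde\CN_\Lambda(k)$ with Dieudonné lattice $M\subseteq\Lambda_k$ (using the Corollary after Lemma~\ref{rep}), crystalline Dieudonné theory identifies $D(X)$ with $M/pM$ compatibly with all the extra structure, and $D(\rho_{\Lambda^-,X})$ with the map induced by the inclusion $\Lambda_k^{\vee}=\Lambda^-\hookrightarrow M$ on the quotients by $p$; hence $E(X)=\ker(D(\rho_{\Lambda^-,X}))$ is canonically $M/\Lambda_k^\vee\subseteq \Lambda_k/\Lambda_k^\vee=V_k$. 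This is exactly $f(M)$ in the notation of Lemma~\ref{kwert}. (This matching is the content of \S4.6 of \cite{VW}, adapted verbatim.)

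Then the factorization is immediate from Lemma~\ref{kwert}: that lemma says $f$ carries $\CV_\Lambda(k)$ bijectively onto $S_V(k)$, and by construction $\CN_\Lambda(k)=\widetilde\CN_\Lambda(k)$ is in bijection with $\CV_\Lambda(k)$ (via the Corollary after Lemma~\ref{rep}), so $\tilde f(\CN_\Lambda(k))=f(\CV_\Lambda(k))=S_V(k)\subseteq\mathfrak{Grass}(V)(k)$. Hence $\tilde f|_{\CN_\Lambda}$ sends $k$-points into $S_V$; since $S_V$ is closed in $\mathfrak{Grass}(V)$ and $\CN_\Lambda$ is reduced, the morphism factors through $S_V$, giving the desired $f\colon\CN_\Lambda\to S_V$.

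The only genuine subtlety — the step I expect to be the main obstacle to write cleanly — is the compatibility of the crystalline identification $D(X)\cong M/pM$ with the symplectic pairing, i.e. checking that the alternating form on $E(X)\subseteq V_k$ induced by the polarization $\lambda$ matches the form $\langle\,,\,\rangle_V$ used to define $S_V$, and that the quotient-by-$p$ of the inclusion $\Lambda^-\hookrightarrow M$ really is $D(\rho_{\Lambda^-,X})$. Both are handled exactly as in \cite{VW}, \S4.6, and I would simply cite that reference; the diagram \eqref{polcomm} is what guarantees the polarizations are compatibly identified, so that $E(X)$ is isotropic of the correct rank $d$ and lies in $S_V(k)$ rather than merely in $\mathfrak{Grass}(V)(k)$.
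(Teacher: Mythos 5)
Your argument is correct and matches the paper's: the paper likewise observes that, since $\CN_\Lambda$ is reduced and $S_V$ is closed in $\mathfrak{Grass}(V)$, it suffices to check the factorization on $\BF$-valued points, where it follows from Lemma~\ref{kwert}. You have merely spelled out more explicitly the Dieudonn\'e-theoretic identification of $\tilde f$ on points with the map of Lemma~\ref{kwert}, which the paper leaves implicit by citing \cite{VW}, \S4.6.
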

\begin{proof}
We can test this on $\BF$-valued points. But then the claim follows from Lemma \ref{kwert}.
\end{proof} 
\begin{proposition}\label{iso}
The map $f: \CN_{\Lambda}\rightarrow S_V$ is an isomorphism. 
\end{proposition}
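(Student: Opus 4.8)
The plan is to upgrade the bijection on $\BF$-points from Lemma \ref{kwert} to an isomorphism of schemes by a standard Grassmannian argument, following \cite{VW}, \S 4. First I would observe that $\tilde f \colon \widetilde{\CN}_\Lambda \to \mathfrak{Grass}(V)$ is a morphism between a projective $\BF$-scheme (Lemma \ref{rep}) and a projective $\BF$-scheme, and that $S_V \hookrightarrow \mathfrak{Grass}(V)$ is a closed immersion; hence it suffices to produce an inverse on the reduced scheme $\CN_\Lambda$. The key step is to construct, for an $\BF$-algebra $R$ and a point $U \in S_V(R) \subseteq \mathfrak{Grass}(V)(R)$, a quadruple $(X, \iota, \lambda, \rho) \in \widetilde{\CN}_\Lambda(R)$ functorially in $R$. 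One does this by letting $M \subseteq \Lambda^+ \otimes_{\BF} R = \Lambda_R/\,?$ \dots more precisely, one takes the preimage of $U$ under the projection $D(X_{\Lambda^+}) \otimes R \to (\Lambda^+/\Lambda^-)\otimes R = V\otimes R$ and uses Grothendieck--Messing / Zink's theory of displays to deform: $U$ specifies a submodule of the Dieudonn\'e crystal of $X_{\Lambda^+}$ lying between $D(X_{\Lambda^-})$ and $D(X_{\Lambda^+})$, and by Lemma \ref{dieud} (applied over $W(k)$ for geometric points, then spread out) the resulting filtered module is automatically stable under $F$, $V$, $\iota(\pi)$, hence defines a $p$-divisible group $X$ with $\CO_E$-action; the polarization $\lambda$ is inherited from \eqref{polcomm} via the condition $M = M^\vee$, which corresponds to $U = U^\perp$, i.e. the Lagrangian condition built into $S_V$; and the wedge/Pappas condition \eqref{wedcond} corresponds exactly to $[U + \Phi(U) : U] \le 1$, which is the defining inequality of $S_V$ in \eqref{SV}. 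This produces a morphism $g \colon S_V \to \widetilde{\CN}_\Lambda$; since $S_V$ is reduced it factors through $\CN_\Lambda$.

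Next I would check that $f \circ g$ and $g \circ f$ are the identity. On $\BF$-points this is precisely Lemma \ref{kwert}, and since all four schemes in sight are of finite type over $\BF$ with $S_V$ reduced (and $\CN_\Lambda$ reduced by definition), a morphism that is the identity on $\BF$-points and compatible with the universal objects must be the identity; alternatively one argues that $f$ is a bijective morphism of reduced finite-type $\BF$-schemes, $S_V$ is normal by Proposition \ref{SDLV} (more precisely $S_V$ is normal with isolated singularities), so by Zariski's Main Theorem it is enough to know $f$ is birational — which follows from the isomorphism on the dense open stratum $S_m = X_B(w)$, where the Grassmannian chart is literally the Deligne--Lusztig chart. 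I expect the cleanest route is in fact the explicit inverse $g$, because it avoids normality subtleties on the source $\CN_\Lambda$ (which a priori we only know is reduced).

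The main obstacle, and the place where real work is needed rather than formal nonsense, is the construction of $g$ over a general (non-reduced, though here $R$ is reduced since $S_V$ is reduced) $\BF$-algebra: one must check that the submodule $M \subseteq D(X_{\Lambda^+})_R$ cut out by $U$ really is, Zariski-locally on $\Spec R$, the Hodge filtration of a genuine $p$-divisible group deforming $X_{\Lambda^+}$, and that the induced $\iota$ and $\lambda$ satisfy the Kottwitz condition and the Pappas condition on the nose — i.e. that the numerical conditions defining $S_V$ are equivalent to the moduli conditions defining $\widetilde{\CN}_\Lambda$, not merely implied by them. This is exactly the content of \S 4.6 of \cite{VW} transposed to the ramified setting, and the verification uses Lemma \ref{dieud} to see that $F$, $V$, $\Pi$-stability is automatic once $M^\vee \subseteq M \subseteq \Lambda_R$, so that the only genuine conditions are $M = M^\vee$ and $[M + \tau M : M] \le 1$, matching $U = U^\perp$ and $[U + \Phi(U):U]\le 1$. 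Once this dictionary is in place, functoriality in $R$ is formal, and the two composites being the identity reduces to the already-established statement on $k$-points in Lemma \ref{kwert}.

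\begin{proof}
We construct an inverse $g\colon S_V\to\CN_\Lambda$ to $f$. Recall that $\widetilde\CN_\Lambda$ is a projective $\BF$-scheme (Lemma \ref{rep}) and that $S_V\subseteq\mathfrak{Grass}(V)$ is closed. Let $R$ be a reduced $\BF$-algebra and let $U\in S_V(R)$, a locally direct summand of $V\otimes_{\BF_p}R$ of rank $d$ with $U=U^\perp$ and $[U+\Phi(U):U]\leq 1$. Let $q\colon\Lambda^+\otimes_{\BF}R\to V\otimes R=(\Lambda^+/\Lambda^-)\otimes R$ be the projection, where we write $\Lambda^+\otimes_\BF R=D(X_{\Lambda^+})\otimes_\BF R$, and set $M=q^{-1}(U)$, a locally direct summand of $\Lambda^+\otimes R$ with $\Lambda^-\otimes R\subseteq M\subseteq\Lambda^+\otimes R$ and $M/(\Lambda^-\otimes R)=U$. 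By Lemma \ref{dieud} (applied after base change to geometric points, then extended over $R$ since the operators are $R$-linear up to $\sigma$ and the inclusions are preserved), $M$ is stable under $F$, $V$ and $\iota(\pi)$. Hence, Zariski-locally on $\Spec R$, the submodule $M\subseteq D(X_{\Lambda^+})\otimes R$ defines, via Grothendieck--Messing/Zink theory, a $p$-divisible group $X$ over $R$ with $\CO_E$-action $\iota$ together with an isogeny $X\to X_{\Lambda^+}\times_\BF R$ with kernel $D(\rho_{\Lambda^+,X})=M/(\Lambda^-\otimes R)$, hence by composition with $\rho_{\Lambda^+}$ a quasi-isogeny $\rho\colon X\to\BX\times_\BF R$ of height $0$. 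The condition $M=M^\vee$, which by the definition of $\langle\,,\,\rangle_V$ translates into $U=U^\perp$, shows by the commuting diagram \eqref{polcomm} that the principal quasi-polarization $\lambda_{X_{\Lambda^+}}$ descends to a principal quasi-polarization $\lambda$ on $X$ with the required Rosati property and with $\varrho^*(\lambda_\BX)$ differing from $\lambda$ by a $\BZ_p^\times$-scalar. The Kottwitz condition on $\Lie X$ holds because it holds for $X_{\Lambda^\pm}$ and is a closed condition compatible with the isogeny; the Pappas condition \eqref{wedcond}, i.e. $\bigwedge^2(\iota(\pi)\mid\Lie X)=0$, says exactly that $\iota(\pi)$ acts on $\Lie X=M/VM$ with rank $\leq 1$, which by the computation $VM=\Pi\tau^{-1}M$ and $M/VM\cong (M+\tau M)/\tau M$ is equivalent to $[M+\tau M:M]\leq 1$, hence to $[U+\Phi(U):U]\leq 1$, which holds by hypothesis. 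Thus $(X,\iota,\lambda,\rho)\in\widetilde\CN_\Lambda(R)$; moreover $\rho_{\Lambda^+,X}$ is an isogeny, so in fact this point lies in $\widetilde\CN_\Lambda(R)$ in the sense of the subfunctor, and since $S_V$ is reduced the resulting morphism $S_V\to\widetilde\CN_\Lambda$ factors through $\CN_\Lambda$. Call it $g$. Functoriality in $R$ is clear from the construction.

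It remains to see $f\circ g=\mathrm{id}_{S_V}$ and $g\circ f=\mathrm{id}_{\CN_\Lambda}$. Both $\CN_\Lambda$ and $S_V$ are reduced schemes of finite type over $\BF$, so it suffices to check both identities on $\BF$-valued points, indeed on $k$-valued points for $k$ an algebraically closed extension of $\BF$. On such points $f$ is the map $M\mapsto M/\Lambda^\sharp_k$ of Lemma \ref{kwert}, and $g$ is the map $U\mapsto q^{-1}(U)$ from the proof of that lemma; Lemma \ref{kwert} asserts precisely that these are mutually inverse bijections between $\CV_\Lambda(k)$ and $S_V(k)$. By Proposition \ref{moduldescr} and the corollary following Lemma \ref{rep}, $\CN_\Lambda(k)=\CV_\Lambda(k)$, so $f\circ g$ and $g\circ f$ are the identity on $k$-points. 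Since both schemes are reduced and of finite type over the algebraically closed field $\BF$, a morphism agreeing with the identity on all $\BF$-points is the identity. Hence $f$ is an isomorphism.
\end{proof}
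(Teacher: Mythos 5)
Your route is genuinely different from the paper's. The paper does not construct an explicit inverse; it extends the bijection of Lemma \ref{kwert} from algebraically closed fields to arbitrary fields $k$ containing $\BF$ (using Zink's theory of windows for displays), notes that $f$ is proper (both schemes are projective) and $S_V$ is normal (Proposition \ref{SDLV}), and concludes by Zariski's Main Theorem. Your alternative — building an inverse $g$ functorially over arbitrary reduced $\BF$-algebras — is a legitimate strategy, but it trades a short ZMT argument for the hardest step in the whole picture: one must actually produce a $p$-divisible group over a general base from Hodge-filtration data, which the paper deliberately avoids having to do.

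Two concrete points in your construction need repair. First, the identification ``$\Lambda^+\otimes_\BF R=D(X_{\Lambda^+})\otimes_\BF R$'' does not typecheck: $\Lambda^+=\Lambda_\BF$ is a $W(\BF)$-lattice, not an $\BF$-module, so $\Lambda^+\otimes_\BF R$ is meaningless; what you want is $D(X_{\Lambda^+})=\Lambda^+/p\Lambda^+$, a $2n$-dimensional $\BF$-vector space, and $q$ is then the surjection $(\Lambda^+/p\Lambda^+)\otimes R\to(\Lambda^+/\Lambda^-)\otimes R=V\otimes_{\BF_p}R$. In particular your $M=q^{-1}(U)$ is a finite-length $R$-module, not a $W$-lattice, so you cannot apply Lemma \ref{dieud} to it directly — that lemma is a statement about $W(k)$-lattices over a perfect field $k$; to make the argument run you would need to lift $M$ to a genuine window or display over $R$. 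Second, the appeal to Grothendieck--Messing is off target: Grothendieck--Messing governs deformations over nilpotent thickenings, whereas here $R$ is a reduced $\BF$-algebra, and what is needed is the reconstruction of a $p$-divisible group from a window/display over $R$. That reconstruction is possible but is exactly the delicate content hidden inside ``Zariski-locally $\ldots$ defines a $p$-divisible group $X$'', and it is precisely the step the published proof sidesteps by restricting to field-valued points. Your reduction of $f\circ g={\rm id}$ and $g\circ f={\rm id}$ to $\BF$-points, given reducedness and finite type over $\BF$, is fine. If you want to salvage your route most economically, do what the paper does: extend the bijection of Lemma \ref{kwert} only to $k$-points for fields $k$ (reducedness of $\CN_\Lambda$ then forces the generic fibre of $f$ to be a single rational point, hence $f$ is birational), and apply ZMT using normality of $S_V$. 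Note also that ZMT requires normality of the \emph{target}, so your worry about ``normality subtleties on the source $\CN_\Lambda$'' is unfounded.
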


\begin{proof}
By Lemma \ref{kwert}, we know that $f$ is a bijection on $k$-valued points for any algebraically closed field $k$ of characteristic $p$. 
 Next, as in \cite{VW}, using Zink's theory of windows for displays \cite{Zi, Zi2}, one  checks that this reasoning carries over to $k$-valued points for any  field $k$ containing $\BF$. Since $S_V$ is normal (Proposition \ref{SDLV}) and $f$ is proper (as a morphism between projective varieties), it follows from Zariski's main theorem that $f$ is an isomorphism.
\end{proof}

\begin{corollary}\label{ProjectiveNormal}
The scheme $\CN_{\Lambda}$ is projective and normal.
\end{corollary}
\begin{proof}
Combine Proposition \ref{SDLV}, Lemma \ref{rep} and Proposition \ref{iso}.
\end{proof}

\begin{proposition}
The isomorphism $f$ is compatible with inclusions of vertex lattices. 
\end{proposition}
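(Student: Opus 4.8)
The plan is to verify the compatibility on the level of the lattice descriptions and then invoke the identifications already established. Let $\Lambda' \subseteq \Lambda$ be an inclusion of vertex lattices, of types $t(\Lambda') = 2d'$ and $t(\Lambda) = 2d$ with $d' \leq d$. By Proposition~\ref{pointstrat}(i) we have $\CV_{\Lambda'}(\BF) \subseteq \CV_{\Lambda}(\BF)$, and on the scheme level $\CN_{\Lambda'} \hookrightarrow \CN_{\Lambda}$ is the closed immersion coming from the fact that $\widetilde\CN_{\Lambda'}$ is the subfunctor of $\widetilde\CN_{\Lambda}$ where the isogeny $\rho_{\Lambda'^+,X}$ (a priori into $X_{\Lambda^+}$) actually factors through $X_{\Lambda'^+}$. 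What has to be checked is that under the isomorphisms $f\colon \CN_{\Lambda'} \xrightarrow{\sim} S_{V'}$ and $f\colon \CN_{\Lambda} \xrightarrow{\sim} S_V$ of Proposition~\ref{iso}, with $V' = \Lambda'/\Lambda'^\sharp$ and $V = \Lambda/\Lambda^\sharp$, this inclusion corresponds to the closed immersion $S_{V'} \hookrightarrow S_V$. The natural candidate for the latter is described in Proposition~\ref{irred}(ii): the chain $\Lambda'^\sharp \subseteq \Lambda^\sharp \subseteq \Lambda \subseteq \Lambda'$ exhibits $W := \Lambda^\sharp/\Lambda'^\sharp$ as an isotropic subspace of $V'$ of dimension $d - d'$ (isotropic because $\Lambda^\sharp$ is contained in its own dual relative to the chain, using $\Lambda^\sharp \subseteq (\Lambda')^\sharp{}^\sharp$ appropriately), with $W^\vee/W \cong \Lambda/\Lambda^\sharp = V$; then $\overline{X_W} = \{U \in S_{V'} \mid W \subseteq U\} \cong S_{W^\vee/W} = S_V$.

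The key step, which I would carry out on $\BF$-points and then promote, is the following identification of lattice chains. A point of $\CN_{\Lambda'}(\BF)$ is a lattice $M$ with $\Lambda'^\sharp_\BF \subseteq M \subseteq \Lambda'_\BF$, $M = M^\vee$, $[M + \tau M : M] \leq 1$; under $f$ it goes to $M/\Lambda'^\sharp_\BF \in S_{V'}(\BF)$. Viewing the same $M$ as a point of $\CN_\Lambda(\BF)$ requires $\Lambda^\sharp_\BF \subseteq M \subseteq \Lambda_\BF$, which follows from Proposition~\ref{pointsvertexlattice}: the minimality of $\Lambda(M)$ and $\Lambda(M) \subseteq \Lambda'$ force $\Lambda(M) \subseteq \Lambda$ once $M \subseteq \Lambda_\BF$, and indeed $M \subseteq \Lambda_\BF$ because $\Lambda'(M) \subseteq \Lambda$ by hypothesis. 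Dually $\Lambda^\sharp_\BF = \Lambda'(M)^\sharp_\BF$ is the maximal lattice with $(-)^\sharp_\BF \subseteq M$, so $\Lambda^\sharp_\BF \subseteq M$ automatically. Then $f$ sends this point to $M/\Lambda^\sharp_\BF \in S_V(\BF)$, and the diagram
\begin{equation*}
\xymatrix{
M/\Lambda'^\sharp_\BF \ar@{|->}[r] & M/\Lambda^\sharp_\BF
}
\end{equation*}
is exactly the quotient map realizing the identification $\overline{X_W}(\BF) = \{U \supseteq W\} \xrightarrow{\sim} S_{W^\vee/W}(\BF)$ with $W = \Lambda^\sharp_\BF/\Lambda'^\sharp_\BF$ (note $M \supseteq \Lambda^\sharp_\BF \supseteq \Lambda'^\sharp_\BF$, so $M/\Lambda'^\sharp_\BF$ indeed contains $W$). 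One must also check the moduli-theoretic side matches, i.e. that $E(X) = \ker D(\rho_{\Lambda^-,X})$ and $E'(X) = \ker D(\rho_{\Lambda'^-,X})$ are related by the inclusion of Grassmannians $\mathfrak{Grass}(V) \hookrightarrow \mathfrak{Grass}(V')$ induced by $V = W^\perp/W \hookrightarrow V'$ twisted appropriately — this is a direct computation with the factorization $\rho_{\Lambda'^-,X} = \rho_{\Lambda^-,X} \circ (\text{fixed isogeny } X_{\Lambda^-} \to X_{\Lambda'^-})$ and the exactness properties of $D(-)$ recalled before Lemma~\ref{kwert}.

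Having matched the two maps $\CN_{\Lambda'} \to S_V$ and $\CN_{\Lambda'} \to \CN_\Lambda \xrightarrow{f} S_V \supseteq \overline{X_W} \cong S_{V'}$ on $\BF$-points, I would conclude they agree as morphisms of schemes: both $\CN_{\Lambda'} \cong S_{V'}$ and $\overline{X_W} \cong S_{V'}$ are reduced (indeed normal, by Corollary~\ref{ProjectiveNormal} and Proposition~\ref{SDLV}) and of finite type over $\BF$, so a morphism between them is determined by its effect on $\BF$-points; since both maps in question restrict to $f$ on $\CN_{\Lambda'}$ and land in the closed subscheme $\overline{X_W}$, they coincide. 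I expect the main obstacle to be the bookkeeping in the second paragraph: correctly identifying $W$ as the right isotropic subspace of $V'$ (as opposed to, say, a subspace of $V$ or a quotient), checking it is isotropic for $\langle\,,\,\rangle_{V'}$, and verifying that the induced symplectic space $W^\vee/W$ is canonically $V$ with its form $\langle\,,\,\rangle_V$ — all of which is elementary linear algebra with the dual-lattice chain $\Lambda'^\sharp \subseteq \Lambda^\sharp \subseteq \Lambda \subseteq \Lambda'$, but must be done carefully to get the duality conventions right. The compatibility on the display/window side is then routine given the analogous arguments already imported from \cite{VW}.
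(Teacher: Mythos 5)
Your overall strategy matches the paper's one‑line proof (``check on $k$-valued points, obvious from the definition of $f$''), and your plan to then upgrade from $\BF$-points to a scheme-theoretic identity via reducedness is sound; but the lattice bookkeeping that you yourself flagged as the main pitfall has indeed gone wrong, in a way that makes the central identification false as stated.

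From $\Lambda'\subseteq\Lambda$ you correctly note $d'\leq d$, but the dual lattice chain you write, $\Lambda'^{\sharp}\subseteq\Lambda^{\sharp}\subseteq\Lambda\subseteq\Lambda'$, is inverted: dualizing $\Lambda'\subseteq\Lambda$ gives $\Lambda^{\sharp}\subseteq\Lambda'^{\sharp}$, and since both are vertex lattices the correct chain is
$\Lambda^{\sharp}\subseteq\Lambda'^{\sharp}\subseteq\Lambda'\subseteq\Lambda$.
Consequently your $W:=\Lambda^{\sharp}/\Lambda'^{\sharp}$ is a ``quotient'' of a smaller lattice by a larger one and is not defined, and the purported embedding $\overline{X_W}\cong S_V\hookrightarrow S_{V'}$ is impossible on dimension grounds, since $\dim S_V=d>d'=\dim S_{V'}$ while $\CN_{\Lambda'}$ sits inside $\CN_\Lambda$, not the other way round. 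The relation you need is the reverse one: set $W:=\Lambda'^{\sharp}/\Lambda^{\sharp}$, an isotropic subspace of $V=\Lambda/\Lambda^{\sharp}$ of dimension $d-d'$ (the orthogonal complement in $V$ of the image of $\Lambda'$, cf.\ the proof of Theorem \ref{mainth}(iv)); then $W^{\vee}/W=\Lambda'/\Lambda'^{\sharp}=V'$, and Proposition \ref{irred}(ii) gives $S_{V'}\cong\overline{X_W}\hookrightarrow S_V$. The parenthetical remark near the end, $M\supseteq\Lambda^{\sharp}_{\BF}\supseteq\Lambda'^{\sharp}_{\BF}$, has the same inversion. Once $W$ is corrected, the pointwise computation reads: for $M$ with $\Lambda'^{\sharp}_{\BF}\subseteq M\subseteq\Lambda'_{\BF}$, one has $U:=M/\Lambda^{\sharp}_{\BF}\supseteq W$, and $U/W = M/\Lambda'^{\sharp}_{\BF}$, which is precisely the identification $\overline{X_W}\xrightarrow{\ \sim\ } S_{W^{\vee}/W}=S_{V'}$ of Proposition \ref{irred}. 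With that repair, and keeping the rest of your argument as is, the proof is correct and follows the same route as the paper.
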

\begin{proof}
We can check this on $k$-valued points for $k$ algebraically closed, where it is obvious from the definition of the map $f$. 
\end{proof}

\noindent Let $$\CN^{\circ}_{\Lambda}=\CN_{\Lambda}\setminus \bigcup_{\Lambda'\varsubsetneq \Lambda}\CN_{\Lambda'}.$$
For any stratified space, we define the following simplicial complex, called its {\it incidence complex}: The simplices are formed by finite sets of strata such that the intersection of their closures is non-empty.

\noindent  Now we are ready to state the main theorem. It summarizes the results of this section and of the preceding sections.
\begin{theorem}\label{mainth}
i) There is a stratification of $\bar\CN^{0}_{\rm{red}}$ by locally closed subschemes 
given by $$
\bar\CN^{0}_{\rm{red}}= \biguplus_{\Lambda}\CN^{\circ}_{\Lambda}\, , 
$$
where $\Lambda$ runs over all vertex lattices. Each stratum $\CN^{\circ}_{\Lambda}$ is isomorphic to a Deligne-Lusztig variety attached to the symplectic group of size $t(\Lambda)$ over $\BF_p$ and a standard Coxeter element. The closure of any $\CN^{\circ}_{\Lambda}$ in $\bar\CN^{0}_{\rm{red}}$ is given by 
$$
\overline{\CN^{\circ}_{\Lambda}}=\biguplus_{ \Lambda'\subseteq \Lambda}\CN^{\circ}_{\Lambda'}=\CN_{\Lambda}\, ,
$$
and this is a projective variety of dimension   $t(\Lambda)/2$, which is normal 
with isolated singularities when $t(\Lambda)\geq 4$. When $t(\Lambda)=2$, then $\overline{\CN^{\circ}_{\Lambda}}\cong \BP^1$.

\smallskip

\noindent We call this stratification the \emph{Bruhat-Tits stratification} of $\bar\CN^{0}_{\rm{red}}$. 

\smallskip

\noindent ii) The incidence complex of the Bruhat-Tits stratification can be identified with the simplicial complex of vertex lattices $\CT$, cf. Proposition \ref{Gebaeude}.  In particular,  $\bar\CN^{0}_{\rm{red}}$ is connected.  When $n$ is odd, or $n$ is even and $C$ is non-split, then this can also be identified with the Bruhat-Tits simplicial complex of $\SU(C)(\BQ_p)$.

\smallskip

\noindent iii) The $\CN_{\Lambda}$, for $\Lambda$ of  maximal type,  are the irreducible components of $\bar\CN^{0}_{\rm{red}}$. The maximal value $t_{\max}$ of $t(\Lambda)$ is given by
$$t_{\max}=
\begin{cases}
   n  & \text{if $n$ is even and $C$ is split,}\\  
   n-2 & \text{if $n$ is even and  $C$ is non-split,}\\    
      n-1 & \text{if $n$ is odd.}\\
\end{cases}$$
 
 \noindent In particular,  $\bar\CN^{0}_{\rm{red}}$ is of pure dimension $t_{\max}/2$. 
  
\smallskip

\noindent iv) Let $\Lambda$ be a vertex lattice of type $2m$ and let $V=\Lambda/\Lambda^{\sharp}$. The scheme  $\overline{\CN^{\circ}_{\Lambda}}=\CN_{\Lambda}$ is isomorphic to the projective closure of a generalized Deligne-Lusztig variety $S_V$.  Furthermore the stratification by Deligne-Lusztig varieties $S_V= \biguplus_{i = 0}^m S_i$ of Proposition \ref{decomp} is compatible with the stratification given in i) in the following sense: Under the isomorphism  $\overline{\CN^{\circ}_{\Lambda}}\cong S_V$ (given by Proposition \ref{iso}), for any $i\leq m$ and any inclusion of index $m-i$ of a vertex lattice $\Lambda'\subseteq \Lambda$, the subscheme $\CN^{\circ}_{\Lambda'}$ is identified with an irreducible component of $S_i$, and all  irreducible components of $S_i$ arise in this way. In particular, $S_m$  is identified with $\CN^{\circ}_{\Lambda}$.

\begin{proof} 
The claims in {\emph{i)}}  follow from Proposition \ref{pointstrat}, the construction of $\CN^{\circ}_{\Lambda}$, Lemma \ref{rep}, and Propositions \ref{iso}, \ref{SDLV} and \ref{decomp}.
The claims in {\emph{ii)}}   follow from Proposition \ref{Gebaeude}.
The claim on the value of $t_{\max}$ is proved in Lemma \ref{type}. The claims on the dimension and on the irreducible components then  follow from 
Propositions \ref{dimS} and \ref{iso}. The connectedness of $\bar\CN^{0}_{\rm{red}}$ follows from the connectedness of the simplicial complex $\CT$, cf. Proposition \ref{Gebaeude}. 
The first claim in {\emph{iv)}} is given by Proposition \ref{iso}. 
Next we prove the claim on the compatibility in {\emph{iv)}}. For an inclusion of index $m-i$ of  vertex lattices $\Lambda'\subseteq \Lambda$ one gets an isotropic subspace $W$ of $V$, namely the orthogonal complement of the image of $\Lambda'$ under the projection  $\Lambda \rightarrow V$. The irreducible components of $S_i$ correspond to the isotropic subspaces of $V$ of dimension $m-i$, compare Proposition \ref{irred}. 
Conversely, given an  irreducible component of $S_i$ and hence an isotropic subspace $W$ of $V$, let $\Lambda '$ be the preimage of $W^{\vee}$ under the projection $\Lambda \rightarrow V$. Then one easily checks that $\Lambda '$ is indeed a vertex lattice. 
Given a point  in the irreducible component of $S_i$ associated to $W$ which  corresponds to a  flag $W=F_{m-i}\subset\ldots\subset F_m$,  one easily checks that the point in $\bar\CN^0$, with Dieudonn\'e Module 
$f^{-1}(F_m)$,  belongs to $\CN^{\circ}_{\Lambda '}$. (Here $f$ is the map of Lemma \ref{kwert}.) Similarly, any point in ${\CN^{\circ}_{\Lambda'}}$ is mapped to a point in the irreducible component of $S_i$ which corresponds to the space $W$ associated to $\Lambda'$ as above.
\end{proof}

\end{theorem}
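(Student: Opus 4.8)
The plan is to assemble the four parts from the structural results of the preceding sections.

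For \emph{part i)}, I would promote the set-theoretic decomposition $\CV(\BF)=\biguplus_\Lambda\CV^\circ_\Lambda(\BF)$ of Proposition~\ref{pointstrat} to a stratification of $\bar\CN^0_{\rm red}$. Lemma~\ref{rep} furnishes the closed subschemes $\CN_\Lambda\subseteq\bar\CN^0_{\rm red}$ with $\CN_\Lambda(\BF)=\CV_\Lambda(\BF)$; combining the compatibility of $f$ with inclusions of vertex lattices and Proposition~\ref{pointstrat}(i) gives $\CN_\Lambda=\bigcup_{\Lambda'\subseteq\Lambda}\CN_{\Lambda'}$, so that $\CN^\circ_\Lambda=\CN_\Lambda\setminus\bigcup_{\Lambda'\subsetneq\Lambda}\CN_{\Lambda'}$ is locally closed with $\CN^\circ_\Lambda(\BF)=\CV^\circ_\Lambda(\BF)$ and $\overline{\CN^\circ_\Lambda}=\CN_\Lambda=\biguplus_{\Lambda'\subseteq\Lambda}\CN^\circ_{\Lambda'}$ (the top stratum being dense, by Proposition~\ref{decomp}). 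Then I would transport the remaining assertions through the isomorphism $f\colon\CN_\Lambda\to S_V$ of Proposition~\ref{iso}, with $V=\Lambda/\Lambda^\sharp$ of dimension $t(\Lambda)=2m$: projectivity and normality from Corollary~\ref{ProjectiveNormal}; the dimension $m=t(\Lambda)/2$ from Propositions~\ref{decomp} and~\ref{dimS}; the isolated singularities for $m\ge 2$, and $S_V\cong\BP^1$ for $m=1$, from Proposition~\ref{SDLV} and Remark~\ref{Ssing}; and, under $f$, the identification of the open stratum $\CN^\circ_\Lambda$ with the top piece $S_m=X_B(w_m)$, which by Proposition~\ref{decomp} is the Deligne--Lusztig variety for $\Sp(V)$ over $\BF_p$ attached to the standard Coxeter element $w$.

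For \emph{part ii)}, using $\overline{\CN^\circ_\Lambda}=\CN_\Lambda$ from i) and the intersection criterion of Proposition~\ref{pointstrat}(ii) ($\CN_{\Lambda_1}\cap\CN_{\Lambda_2}=\CN_{\Lambda_1\cap\Lambda_2}$ if $\Lambda_1\cap\Lambda_2$ is a vertex lattice, and $\emptyset$ otherwise), I would read off the incidence complex and identify it with the simplicial complex $\CT$ of Proposition~\ref{Gebaeude}; the connectedness of $\bar\CN^0_{\rm red}$ then follows from that of $\CT$, and the last sentence of ii) is the $\SU(C)(\BQ_p)$-equivariant identification of $\CL=\CT$ with the Bruhat--Tits complex of $\SU(C)$ from Proposition~\ref{Gebaeude}, valid in the two cases listed. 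For \emph{part iii)}, I would note that each $\CN_\Lambda\cong S_V$ is irreducible (Proposition~\ref{decomp}), that $\CN_{\Lambda_1}\subseteq\CN_{\Lambda_2}$ iff $\Lambda_1\subseteq\Lambda_2$ (Proposition~\ref{pointstrat}(i)), and that any vertex lattice of non-maximal type is properly contained in a larger one (from the structure theory of vertex lattices, cf. Proposition~\ref{Gebaeude}), so that every $\CN_\Lambda$ lies in some $\CN_{\Lambda''}$ with $\Lambda''$ of maximal type; hence the maximal members among the $\CN_\Lambda$ are exactly those with $t(\Lambda)=t_{\max}$, these are the irreducible components, they cover $\bar\CN^0_{\rm red}$, and each has dimension $t(\Lambda)/2=t_{\max}/2$, which yields the purity. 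The value of $t_{\max}$ is Lemma~\ref{type}.

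For \emph{part iv)}, the first claim is Proposition~\ref{iso}. For the compatibility of the two stratifications, given vertex lattices $\Lambda'\subseteq\Lambda$ with $[\Lambda:\Lambda']=m-i$ one has $\Lambda^\sharp\subseteq\Lambda'\subseteq\Lambda$ (since $\Lambda'=\Lambda'\cap\Lambda$ is a vertex lattice), and the orthogonal complement $W\subseteq V$ of the image of $\Lambda'$ under $\Lambda\to V$ is isotropic of dimension $m-i$ by the index computations of Lemma~\ref{typeeven}; by Proposition~\ref{irred} the irreducible components of $S_i$ are indexed by exactly such subspaces. I would then check on $\BF$-points, via the bijection $M\mapsto M/\Lambda^\sharp_k$ of Lemma~\ref{kwert}, that $f$ carries $\CN^\circ_{\Lambda'}$ onto the component $X_W\subseteq S_i$ attached to $W$, and that every component of $S_i$ arises so, the inverse sending $W$ to the preimage in $\Lambda$ of the orthogonal complement of $W$ in $V$ (which one verifies to be a vertex lattice of index $m-i$ in $\Lambda$); for $i=m$ this gives $W=0$, $\Lambda'=\Lambda$ and $\CN^\circ_\Lambda=S_m$. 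The genuine content has already been established in the cited results --- above all in Proposition~\ref{iso}, where Zink's theory of windows turns the pointwise bijection into a morphism and Zariski's Main Theorem, via the normality of $S_V$, makes it an isomorphism. Within this assembly the step demanding the most care, which I expect to be the principal obstacle, is the bookkeeping in iv): matching inclusions $\Lambda'\subseteq\Lambda$ of vertex lattices with isotropic subspaces of $V$ and with the strata $S_i$, compatibly with both stratifications.
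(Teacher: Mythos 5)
Your proposal is correct and follows essentially the same route as the paper: part i) is assembled from Proposition~\ref{pointstrat}, Lemma~\ref{rep}, and Propositions~\ref{iso}, \ref{SDLV}, \ref{decomp} (with Corollary~\ref{ProjectiveNormal} and Remark~\ref{Ssing}), part ii) from Proposition~\ref{Gebaeude}, part iii) from Lemma~\ref{type} and Propositions~\ref{dimS}, \ref{iso}, and part iv) from Propositions~\ref{iso}, \ref{irred} via the same correspondence $\Lambda'\mapsto W=\Lambda'^{\sharp}/\Lambda^{\sharp}$ and its inverse $W\mapsto$ preimage of $W^{\perp}$. The bookkeeping in iv), including the index computation $[\Lambda:\Lambda']=m-i\Rightarrow t(\Lambda')=2i$ and $\dim W=m-i$, is exactly the paper's argument.
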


\section{Applications to Shimura varieties}\label{section.Shimuravarieties}

In this section we apply the results of the previous sections to Shimura varieties of Picard type. 

Let us recall from \cite{KR2} the global moduli problem that describes such a Shimura variety. (A similar summary is made in \cite{T}.)
Let $\bk$ be an imaginary quadratic field and denote by $\CO_{\bk}$ its ring of integers. Let $a\mapsto \bar a$ be the non-trivial Galois automorphism of $\bk$. Let $n\geq 1$. We consider the Deligne-Mumford stack $\CM(1,n-1)$ over  $\CO_{\bk}$ whose functor of points associates to any  locally noetherian $\CO_{\bk}$-scheme $S$ the groupoid 
of tuples $(A,\iota, \lambda),$ where $A$ is an abelian scheme over $S$, where $\iota:  \CO_{\bk} \rightarrow \End_S(A)$ is an  $\CO_{\bk}$-action on $A$ satisfying the signature condition $(1,n-1)$, i.e.,  the characteristic polynomial of the action of $\iota(a)$ on the Lie algebra is of the form
$$
\text{charpol} (\iota(a), \Lie A)(T)=(T-a)(T-\bar a)^{n-1} \in \mathcal{O}_S [ T ], 
$$
 and where $\lambda: A \rightarrow A^{\vee}$ is a principal polarization for which the Rosati involution satisfies $\iota(a)^*=\iota(\bar a).$ Further, we require  that that the $\CO_{\bk}$-action on $\Lie  A$ satisfies  $\wedge^{n}(\iota(a)-a)=0$ and $\wedge^{2}(\iota(a)-\bar a)=0$ for $n\geq 3$. (Note that we have switched the signature $(n-1,1)$ used in part IV of \cite{KR2} to $(1,n-1)$. This conforms with the paper of Vollaard/Wedhorn \cite{VW}. On the other hand, if considering special cycles on these Shimura varieties in the framework of the Kudla program, it is more natural to consider the signature $(n-1, 1)$. Note that  both stacks are isomorphic via the isomorphism sending $(A,\iota, \lambda)$ to $(A,\bar \iota, \lambda)$, where $\bar\iota$ arises from $\iota$ by precomposing with the non-trivial Galois 
 automorphism of $\bk$.)

As in \cite{KR2}, we denote by  $\mathcal{R}_{(1,n-1)}(\bk)$ the set of {\it relevant hermitian spaces}, i.e., the set of isomorphism classes of hermitian spaces $V$ over $\bk$ of dimension $n$ and signature $(1,n-1)$ which contain a self-dual $\CO_{\bk}$-lattice. We denote by   $\mathcal{R}_{(1,n-1)}(\bk)^{\sharp}$ the set isomorphism classes of pairs $(V, [[L]])$, where $V \in \mathcal{R}_{(1,n-1)}(\bk)$ and $[[L]]$ is a ${\rm U}(V)$-{\it genus} of self-dual hermitian lattices in $V$ (see \cite{KR2}). 
Two hermitian spaces $V,(\ ,\ )$ and $V',(\ ,\ )'$ are {\it strictly similar}, if $V\cong V'$ with $(\ ,\ )=c(\ ,\ )'$ for some $c\in \BQ_+^{\times}$.
As shown in \cite{KR2}, Proposition 2.20 there is a natural decomposition
$$
 \CM(1,n-1)[\frac{1}{2}]=\coprod_{V^{\sharp} \in\mathcal{R}_{(1,n-1)}(\bk)^{\sharp}/{\text{str.sim.}}}\CM(1,n-1)[\frac{1}{2}]^{V^{\sharp}}.
$$ 
Here $[\frac{1}{2}]$ refers to the base change over $\Spec \CO_{\bk}$ with $\Spec \CO_{\bk}[\frac{1}{2}]$. The decomposition is according to the classes in $\mathcal{R}_{(1,n-1)}(\bk)^{\sharp}/{\text{str.sim.}}$ defined by the Tate modules of  points in $\CM(1,n-1)[\frac{1}{2}]$.

For  $V^{\sharp} = (V,L)$, one can identify $\CM(1,n-1)[\frac{1}{2}]^{V^{\sharp}}\times_{{\Spec \CO_{\bk}[\frac{1}{2}]}} \Spec \bk$ with a model of the Shimura variety $\underline{\rm Sh}^V_K$ for ${\rm{GU}}(V)$ with respect to the stabilizer $K$ of $L$ in ${\rm{GU}}(V)(\BA_f)$. This is the canonical model in the sense of Deligne when $n\geq 3$, in which case the reflex field can be identified with $\bk$. When $n=2$, it is the model obtained from the canonical model by extension of scalars from the reflex field $\BQ$ to $\bk$. See \cite{KR2}, section 4, in particular Proposition 4.4, for the precise statement.

Now let $p>2$ be a prime which is ramified in $\bk.$

 We introduce deeper level structures. Let $K^p$ a compact open  subgroup of $K\cap {\rm{GU}}(V)(\BA^p_f)$. We will always take $K^p$ sufficiently small, in order to avoid torsion phenomena.  We consider the following moduli problem $\CM_{K^p}$ over the localization $\CO_{\bk _{(p)}}$of $\CO_{\bk}$ at $p$: It associates to an $\CO_{\bk _{(p)}}$-scheme $S$ the set of isomorphism classes of tuples $(A,\iota, \lambda, \eta^p)$, where $A,\iota, \lambda$  over $S$ are as above,  and where $\eta^p$ is a  $K^p$-level structure $\eta^p : T^p(A)\rightarrow L\otimes \hat\BZ^p \mod K^p$,  as explained in \cite{KR2}, section 13,  and also in \cite{VW}. The moduli problem $\CM_{K^p}$ is represented by a smooth, quasi-projective scheme over $\CO_{\bk _{(p)}}$ (since we are assuming that $K^p$ is small enough). Its generic fiber is a model  of the Shimura variety $\underline{\rm Sh}^V_{K'}$ where  $K'=K^pK_p$, with $K_p$ the stabilizer of the  self-dual lattice $L\otimes_{\CO_{\bk}}\CO_{\bk_p}$ in $V\otimes_{\bk} \bk_p$.

 We denote by $\CM_{K^p}^{ss}$ the supersingular locus of $\CM_{K^p}\times_{\Spec \CO_{\bk_{(p)}}}\Spec\, \overline{\BF}_p$ and by $\widehat\CM_{K^p}^{ss}$ the  completion of  $\CM_{K^p}\times_{\Spec \CO_{\bk_{(p)}}}\Spec (W(\overline{\BF}_p)\otimes_{\BZ_p} \CO_{\bk_p})$ along its  supersingular locus. The application we have in mind is the description of 
$\CM_{K^p}^{ss}$.

Let now $E=\bk_p$ and, as in section \ref{section.Themodulispace}, use the notation $\BF=\overline{\BF}_p$ and $\breve{E}= W({\BF})_{\BQ} \otimes_{\BQ_p} E$. 
Let $\BX$ be the $p$-divisible group of a fixed element $(A^o, \iota^o, \lambda^o, \eta^o)\in {\CM_{K^p}^{ss}}({\BF})$. Using $\BX$, we  define $\CN$ as in section \ref{section.Themodulispace}, as well as $C$ and $J(\BQ_p)$. 
\begin{remark} The discriminants  of the hermitian vector spaces $C$ and $V_p$ differ by the factor $(-1)^{n-1}$. Indeed, 
let  $(E,\iota_0,\lambda_0)$ be a fixed object in $\CM(0,1)(\BF)$,  and consider the $\bk$-vector space  $V'=\Hom_{\CO_{\bk}}^o(E,A^o)$ with hermitian form given by $h'(x,y)=\iota_0^{-1}(\lambda_0^{-1}\circ y^{\vee}\circ \lambda^o \circ x)$. Then one checks  that $C$ is  isomorphic to the space $V_p'$ (compare the proof of \cite{KR2}, Lemma 2.10 and also  \cite{KR}, Lemma 3.9 for the unramified case). On the other hand, $V'$ is positive definite and $V_{\ell}=V'_{\ell}$ for all primes $\ell\neq p$ (cf. \cite{KR2}, Lemma 2.7 and Lemma 2.10).  Since the signature of $V_\BR$ is $(1,n-1)$, the claim follows from a comparison of the product formulas for $V$ and $V'$. 

We recall from section \ref {section.Vertexlattices} (proof of lemma \ref{type})  that the invariant of $C$ is relevant only when $n$ is even (in which case the invariants of $C$ and $V_p$ are different); it is irrelevant when $n$ is odd. 
\end{remark}

One now shows in the same way as in the proof of \cite{KR2}, Theorem 5.5 that there is an isomorphism of formal schemes over $\CO_{\breve{E}}$,
$$
I^V(\BQ)\setminus (\mathcal{N}^0\times { G^V}(\BA_f^p)^o/K^p) \cong \widehat \CM^{ss}_{K^p}.
$$ 
Here $I^V(\BQ)$ denotes the group of quasi-isogenies of  $A^o$ that respect $\iota^o$ and $\lambda^o$, and ${ G^V}(\BA_f^p)^o$ the subgroup of ${ G^V}(\BA_f^p)$ where the multiplier is a unit in $\hat\BZ^p$. Also, we recall that $\CN^0$ is the open and closed formal subscheme of $\CN$ where the quasi-isogeny has height zero. 
Taking the reduced loci of both sides we obtain  for the supersingular locus,  
\begin{equation}\label{supsingloc}
I^V(\BQ)\setminus (\bar\CN^0_{\text{red}}\times { G^V}(\BA_f^p)^o/K^p) \cong \CM^{ss}_{K^p}.
\end{equation}

Let us describe the LHS in more concrete terms. Denote by $g_1,\dots,g_m \in G^V(\BA^p_f)$ representatives of the finitely many double cosets in $I^V(\BQ)\backslash G^V(\BA^p_f)^o/K^p$,  and set
\[
\Gamma_j = I^V(\BQ) \cap g_jK^pg^{-1}_j
\]
for $j = 1,\dots,m$. Then the subgroup $\Gamma_j \subset J(\BQ_p)$ is discrete and cocompact modulo center. By the assumption made on $K^p$ above, $\Gamma_j$ is torsion free for all $j$.  Then the LHS of \eqref{supsingloc} can be written as 
\[
\coprod_{j=1}^m \Gamma_j\backslash \bar\CN^0_{\rm red} \cong I^V(\BQ)\backslash (\bar\CN^0_{\rm red} \times G^V(\BA^p_f)^o/K^p) .
\]
Consider the induced surjective morphism
\begin{equation}\label{Psi}
\Psi\colon \coprod_{j=1}^m \bar\CN^0_{{\rm red}} \to  \CM^{ss}_{K^p}.
\end{equation}
As explained in \cite{VW}, 6.4.,  our assumption on $K^p$  implies that $\Psi$ is a local isomorphism, and that the restriction of $\Psi$ to any closed quasi-compact subscheme of $\bar\CN^0_{\rm red}$ is finite. Hence we may deduce from Theorem \ref{mainth} the following structure theorem. The formulation uses notation of Theorem \ref{mainth}.
\begin{theorem}
The supersingular locus $\CM^{ss}_{K^p}$ is of pure dimension $t_{\rm max}/2$. It is stratified by Deligne-Lusztig varieties to symplectic groups over $\BF_p$ of size varying between $0$ and $t_{\rm max}$ and standard Coxeter elements. The incidence complex of the stratification can be identified with the complex $\coprod_{j=1}^m \Gamma_j\backslash \CT$. 
\end{theorem}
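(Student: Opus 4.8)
The plan is to deduce this theorem from the local structure theorem, Theorem~\ref{mainth}, by transporting its content along the $p$-adic uniformization. Recall from \eqref{supsingloc} the isomorphism $\CM^{ss}_{K^p}\cong\coprod_{j=1}^m\Gamma_j\backslash\bar\CN^0_{\rm red}$ and the surjection $\Psi$ of \eqref{Psi}, which by the discussion preceding the statement is a local isomorphism whose restriction to any quasi-compact closed subscheme is finite. The only group-theoretic input I need is that each $\Gamma_j\subset J(\BQ_p)$ is discrete and torsion free: the stabilizer in $J(\BQ_p)^o$ of a point of $\bar\CN^0(\BF)$, resp.\ of a vertex lattice $\Lambda\subset C$, is contained in an open compact subgroup, resp.\ in the parahoric subgroup of $\Lambda$, hence is compact; intersecting with the discrete group $\Gamma_j$ gives a finite, hence trivial, subgroup. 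Therefore $\Gamma_j$ acts freely on $\bar\CN^0_{\rm red}$ and freely without inversions on the simplicial complex $\CT$, so the quotient $\Gamma_j\backslash\CT$ is again a simplicial complex, whose simplices are precisely the $\Gamma_j$-orbits of simplices of $\CT$.

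First I would read off the dimension and the stratification. Since $\Psi$ is a local isomorphism and $\bar\CN^0_{\rm red}$ has pure dimension $t_{\rm max}/2$ by Theorem~\ref{mainth}~iii), so does $\CM^{ss}_{K^p}$. Any $g\in J(\BQ_p)^o$ carries the Bruhat--Tits stratum $\CN^\circ_\Lambda$ isomorphically onto $\CN^\circ_{g\Lambda}$ and preserves the type $t(\Lambda)$, being a similitude of $C$ which commutes with $\Pi$ and $\tau$ and has unit multiplier; hence the Bruhat--Tits stratification of $\bar\CN^0_{\rm red}$ is $\Gamma_j$-stable and descends to a stratification of $\Gamma_j\backslash\bar\CN^0_{\rm red}$ indexed by $\Gamma_j\backslash\{\text{vertex lattices}\}$. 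As $\Gamma_j$ acts freely on the set of vertex lattices, each stratum $\CN^\circ_\Lambda$ maps isomorphically onto its image; thus, via $\Psi$, the strata of $\CM^{ss}_{K^p}$ are the Deligne--Lusztig varieties attached to the symplectic groups $\Sp_{t(\Lambda)}$ over $\BF_p$ and standard Coxeter elements of Theorem~\ref{mainth}~i), and by Lemma~\ref{type} together with Theorem~\ref{mainth}~iii) the sizes $t(\Lambda)$ run over all even integers between $0$ and $t_{\rm max}$.

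It remains to identify the incidence complex. By Theorem~\ref{mainth}~i) the closure of $\CN^\circ_\Lambda$ in $\bar\CN^0_{\rm red}$ is the projective scheme $\CN_\Lambda$; since $\CN_\Lambda$ is quasi-compact, both $\Psi|_{\CN_\Lambda}$ and the restriction to $\CN_\Lambda$ of the quotient map $\bar\CN^0_{\rm red}\to\Gamma_j\backslash\bar\CN^0_{\rm red}$ (which, $\Gamma_j$ acting freely on vertex lattices, factors through the trivial group) are finite, hence closed, so the image of $\CN_\Lambda$ is the closure of the image of $\CN^\circ_\Lambda$ in $\CM^{ss}_{K^p}$. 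Consequently a finite family of strata of $\CM^{ss}_{K^p}$ has non-empty intersection of closures if and only if, after choosing representatives and replacing them by suitable $\Gamma_j$-translates, the corresponding vertex lattices $\Lambda_0,\dots,\Lambda_r$ satisfy $\bigcap_i\CN_{\Lambda_i}\neq\emptyset$; by Theorem~\ref{mainth}~ii) (together with Proposition~\ref{pointstrat} and Proposition~\ref{Gebaeude}) this holds precisely when $\{\Lambda_0,\dots,\Lambda_r\}$ is a simplex of $\CT$. Hence the incidence complex of the stratification of $\CM^{ss}_{K^p}$ is the disjoint union $\coprod_{j=1}^m\Gamma_j\backslash\CT$.

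I expect the only delicate point to be this last one: ensuring that forming the $\Gamma_j$-quotient is compatible with taking closures of strata and with the simplicial structure, so that the incidence complex of the quotient stratification really is the quotient of the incidence complex. This is where the freeness of the $\Gamma_j$-action (no strata or simplices being collapsed or spuriously identified) and the finiteness of $\Psi$ on quasi-compact subschemes recalled above enter; all of the substantive geometry has already been carried out in the local Theorem~\ref{mainth}.
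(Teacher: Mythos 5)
Your approach is the same as the paper's: reduce to the local Theorem~\ref{mainth} via the $p$-adic uniformization isomorphism \eqref{supsingloc} and the map $\Psi$ of \eqref{Psi}, using the facts cited from \cite{VW}, 6.4.\ (that $\Psi$ is a local isomorphism and finite on quasi-compact closed subschemes). The paper gives no further details for this theorem, so your writeup fills in what is left implicit. Your treatment of the pure dimension and of the stratification by symplectic Deligne--Lusztig varieties is correct, and your observation that the stabilizer of a vertex lattice is a (compact) parahoric, so that torsion-freeness of $\Gamma_j$ forces free action on vertex lattices, is the right input.

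One point deserves a sharper formulation. To identify the incidence complex of the quotient stratification with $\coprod_j\Gamma_j\backslash\CT$, one must in particular know that $\Gamma_j\backslash\CT$ is a genuine simplicial complex whose simplices are determined by their vertex sets, and that the surjection you construct from orbits of simplices onto the incidence complex is also injective. Freeness of the action on vertex lattices ensures that any $\gamma$ fixing a simplex setwise is trivial (since the types of the vertices in a chain $\Lambda_0\supsetneq\cdots\supsetneq\Lambda_r$ are pairwise distinct and $\gamma$ preserves type), so the quotient is at least a $\Delta$-complex with no inversions. But to rule out two $\Gamma_j$-inequivalent simplices of $\CT$ having the same vertex orbits — which is exactly the injectivity you need in the last step, where you only produce vertexwise translates $\gamma_i$ without showing they may be taken equal — one uses the stronger consequence of $K^p$ being sufficiently small (as in \cite{VW}, 6.4): nontrivial elements of $\Gamma_j$ move every vertex "far enough" that no adjacent vertices can be identified. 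Your last paragraph correctly flags this as the delicate point; I would make explicit that it is the smallness of $K^p$, not merely freeness on vertices, that closes the argument.
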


Finally we apply our results on the structure of $\bar\CN^{0}_{\rm{red}}$ to describe the sets of irreducible components and connected components of $ \CM^{ss}_{K^p}$. Let $\Lambda\subset C$ be a vertex lattice of maximal type $t_{\max}$ and let $K_{J,p}$ be its stabilizer in $J(\BQ_p)$. Since all such vertex lattices are in one orbit of $\SU(C)(\BQ_p)$, the choice of $\Lambda$ is harmless. 
\begin{proposition}

\noindent (i) There is a natural bijection
$$
\{\text{irreducible components of } \CM^{ss}_{K^p}\} \leftrightarrow I^V(\BQ)\setminus \big(J(\BQ_p)^o/K_{J,p}\times { G^V}(\BA_f^p)^o/K^p\big).
$$

\smallskip

\noindent  (ii) There is a natural bijection $$
\{\text{connected components of }  \CM^{ss}_{K^p}\} \leftrightarrow I^V(\BQ)\setminus  { G^V}(\BA_f^p)^o/K^p.
$$
\end{proposition}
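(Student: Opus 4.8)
The plan is to deduce both bijections from the uniformization isomorphism \eqref{supsingloc} together with the structural results of Theorem~\ref{mainth}, by the standard bookkeeping with adelic double cosets; no new geometric input is needed beyond what is already available.

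First I would assemble the local picture. By Theorem~\ref{mainth}~iii) the irreducible components of $\bar\CN^0_{\rm red}$ are exactly the $\CN_\Lambda$ with $\Lambda$ a vertex lattice of maximal type $t_{\max}$, and by Theorem~\ref{mainth}~ii) $\bar\CN^0_{\rm red}$ is connected. The group $J(\BQ_p)^o$ acts on $\bar\CN^0_{\rm red}$, permuting its irreducible components, and through $\CN_\Lambda\mapsto\Lambda$ this is the action of $J(\BQ_p)^o$ on the set of maximal vertex lattices; that action is transitive because already $\SU(C)(\BQ_p)\subseteq J(\BQ_p)^o$ is (as recalled just before the statement). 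For the stabilizer I would check that $K_{J,p}=\mathrm{Stab}_{J(\BQ_p)}(\Lambda)$ is contained in $J(\BQ_p)^o$: if $g\in J(\BQ_p)$ fixes the $\CO_E$-lattice $\Lambda$ then $\det_E(g)\in\CO_E^\times$, so its similitude factor satisfies $c(g)^n={\rm Nm}_{E/\BQ_p}(\det_E g)\in\BZ_p^\times$, hence $c(g)\in\BZ_p^\times$. Thus the set of irreducible components of $\bar\CN^0_{\rm red}$ is identified $J(\BQ_p)^o$-equivariantly with $J(\BQ_p)^o/K_{J,p}$.

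Next I would pass to $\CM^{ss}_{K^p}$. As recalled in the discussion around \eqref{Psi}, \eqref{supsingloc} rewrites as an honest disjoint union $\CM^{ss}_{K^p}\cong\coprod_{j=1}^m \Gamma_j\backslash\bar\CN^0_{\rm red}$, where $g_1,\dots,g_m$ represent the finite set $I^V(\BQ)\backslash G^V(\BA_f^p)^o/K^p$ and $\Gamma_j=I^V(\BQ)\cap g_jK^pg_j^{-1}\subseteq J(\BQ_p)^o$ is torsion-free; the smallness assumption on $K^p$ (which is what makes $\Psi$ a local isomorphism) ensures that $\Gamma_j$ acts freely and that each $\bar\CN^0_{\rm red}\to\Gamma_j\backslash\bar\CN^0_{\rm red}$ is \'etale. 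For (ii): $\Gamma_j\backslash\bar\CN^0_{\rm red}$ is connected, being a continuous surjective image of the connected scheme $\bar\CN^0_{\rm red}$, so $\CM^{ss}_{K^p}$ has exactly $m$ connected components, labelled by $g_1,\dots,g_m$, i.e. by $I^V(\BQ)\backslash G^V(\BA_f^p)^o/K^p$. For (i): under a free \'etale quotient, irreducible components pass to $\Gamma_j$-orbits of irreducible components, so the irreducible components of $\Gamma_j\backslash\bar\CN^0_{\rm red}$ are indexed by $\Gamma_j\backslash\bigl(J(\BQ_p)^o/K_{J,p}\bigr)$; summing over $j$ and applying the usual unfolding $\coprod_{j=1}^m\bigl(\Gamma_j\backslash Y\bigr)=I^V(\BQ)\backslash\bigl(Y\times G^V(\BA_f^p)^o/K^p\bigr)$ with $Y=J(\BQ_p)^o/K_{J,p}$ gives the bijection in (i). Finiteness of the right-hand side is automatic since $\CM^{ss}_{K^p}$ is of finite type (or: $\Gamma_j$ is cocompact modulo center in $J(\BQ_p)$ and $K_{J,p}$ is open).

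The steps are essentially formal once Theorem~\ref{mainth} is granted; the only points where care is needed are the identification of the stabilizer of $\Lambda$ with the full group $K_{J,p}$ rather than $K_{J,p}\cap J(\BQ_p)^o$ — i.e. the remark that a lattice automorphism has unit similitude factor — and, on the scheme side, the assertion that the quotient morphisms are \'etale with $\Gamma_j$ acting freely, so that forming irreducible (resp. connected) components commutes with the quotient. Both are routine, following \cite{VW}, 6.4, but they are where a careless argument would break down.
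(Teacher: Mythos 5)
Your proposal is correct and, unsurprisingly, follows the same route as the paper: the paper itself delegates the argument to \cite{VW}, \S 6 (adding only the remark that $\bar\CN^0_{\rm red}$ is connected by Theorem~\ref{mainth}(ii)), and what you have written out is precisely the content of that reference, adapted to the present situation. The two points you single out as needing care — that $K_{J,p}=\mathrm{Stab}_{J(\BQ_p)}(\Lambda)$ is already contained in $J(\BQ_p)^o$ (so that $J(\BQ_p)^o/K_{J,p}$ is unambiguous), and that the smallness of $K^p$ makes each $\bar\CN^0_{\rm red}\to\Gamma_j\backslash\bar\CN^0_{\rm red}$ a free local isomorphism so that irreducible (resp.\ connected) components are genuinely indexed by $\Gamma_j$-orbits — are indeed the only places where the bookkeeping could go wrong, and you have handled both correctly. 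In short, this is a faithful expansion of the citation in the paper's proof rather than an alternative argument.
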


\begin{proof}
These statements   are proved in the same way as the corresponding statements in \cite{VW}, \S 6. Note for (ii)  that, by Theorem \ref{mainth}, (ii), $\bar\CN^{0}_{\rm{red}}$ is connected. \end{proof}


\bigskip
\obeylines
Mathematisches Institut der Universit\"at Bonn  
Endenicher Allee 60 
53115 Bonn, Germany.
email: rapoport@math.uni-bonn.de

\bigskip
\obeylines
Institut f\"ur Experimentelle Mathematik 
Universit\"at Duisburg-Essen, Campus Essen
Ellernstra{\ss}e 29
45326 Essen, Germany
email: ulrich.terstiege@uni-due.de

\bigskip
\obeylines
Mathematisches Institut der Universit\"at Bonn  
Endenicher Allee 60 
53115 Bonn, Germany.
email: s6sewils@uni-bonn.de

\end{document}